\newtheorem{thm}{Theorem}[section]
\newtheorem{proposition}[thm]{Proposition}
\newtheorem{corollary}[thm]{Corollary}
\newtheorem{definition}[thm]{Definition}
\newtheorem{theorem}[thm]{Theorem}
\newtheorem{lemma}[thm]{Lemma}
\newtheorem{remark}[thm]{Remark}
\def\p{\mathsf{p}}
\title[Maximal $L^{q}$-regularity for the Laplacian on manifolds with edges]{Maximal $L^{q}$-regularity for the Laplacian on manifolds with edges}
\author{Nikolaos Roidos}
\address{Department of Mathematics, University of Patras, 26504 Rio Patras, Greece}
\email{roidos@math.upatras.gr}
\begin{document}

\date{\today}
\subjclass[2020]{Primary: 35K65, 35K91, 35R01, 47A55, 47B12. Secondary: 35B40, 35C20, 35K58, 35K59, 47A60}
\thanks{The research project is implemented in the framework of H.F.R.I call ``Basic research Financing (Horizontal support of all Sciences)'' under the National Recovery and Resilience Plan ``Greece 2.0" funded by the European Union - NextGenerationEU (H.F.R.I. Project Number: 14758).}

\begin{abstract}\hspace{-6pt}
We introduce an $R$-sectoriality perturbation technique for non-commuting operators defined in Bochner spaces. Based on this and on bounded $H^{\infty}$-functional calculus results for the Laplacian on manifolds with conical singularities, we show maximal $L^{q}$-regularity for the Laplacian on manifolds with edge type singularities in appropriate weighted Sobolev spaces. As an application, we consider the porous medium equation on manifolds with edges and show short time existence, uniqueness and maximal regularity for the solution. We also provide space asymptotics near the singularities in terms of the local geometry.
\end{abstract}

\maketitle
\tableofcontents

\section{Introduction}

 Let $n,\nu\in\mathbb{N}$ and $\mathbb{V}=(\mathcal{V},\mathfrak{m})$ be an $n$-dimensional smooth closed (i.e. compact without boundary) and connected Riemannian manifold. Moreover, let $\mathcal{B}$ be a $\nu$-dimensional smooth compact manifold with smooth closed and possibly disconnected boundary $\partial\mathcal{B}=\mathcal{Y}$. We call $\mathcal{F}=\mathcal{V}\times \mathcal{B}$ {\em manifold with edge type singularities} or {\em manifold with edges} or simply {\em edge manifold}, when it is equipped with a globally defined, positive definite in the interior and degenerate at the boundary metric $\mathfrak{e}=\mathfrak{m}+\mathfrak{g}$, where $\mathfrak{g}$ is a family over $z\in\mathcal{V}$ of Riemannian metrics on $\mathcal{B}$ such that, on a collar neighborhood of the boundary diffeomorphic to $[0,1)\times\mathcal{Y}$, we have
$$
\mathfrak{g}(z)|_{[0,1)\times \mathcal{Y}}=dx^{2}+x^{2}\mathfrak{h}(z,x), \quad z\in \mathcal{V}, x\in [0,1),
$$
with $\mathbb{V}\times [0,1)\ni (z,x) \mapsto \mathfrak{h}(z,x)$ being a smooth family of Riemannian metrics on $\mathcal{Y}$. Denote $\mathbb{F}=(\mathcal{F},\mathfrak{e})$ and let $\mathbb{F}^{\circ}=(\mathcal{F}\backslash\partial\mathcal{F},\mathfrak{e})$ be the interior of $\mathbb{F}$. We identify the boundary $\partial\mathcal{F}=\mathcal{V}\times \{0\}\times \mathcal{Y}$ with the singularities of $\mathbb{F}$.

The Laplacian $\Delta_{\mathfrak{e}}$ on $\mathbb{F}$ is expressed as 
\begin{equation}\label{deltaF}
\Delta_{\mathfrak{e}}=\Delta_{\mathfrak{m}}+\Delta_{\mathfrak{g}}+D,
\end{equation}
where $\Delta_{\mathfrak{m}}$ is the Laplacian on $\mathbb{V}$, $\Delta_{\mathfrak{g}}$ is the Laplacian on $(\mathcal{B},\mathfrak{g})$ and
$$
D=\Bigg\{\begin{array}{lll} 0 & \text{if}& \nu=1\\
\sum_{i,j=1}^{n}\frac{\partial_{z_{i}}(\det[\mathfrak{g}])}{2\det[\mathfrak{g}]}\mathfrak{m}^{i,j}\partial_{z_{j}} & \text{if}& \nu>1,
\end{array}
$$
where $(z_{1},\dots,z_{n})$ are local coordinates in $\mathcal{V}$. While $D$ has smooth coefficients, $\Delta_{\mathfrak{g}}$ in local coordinates $(z,x,y)\in \mathcal{V}\times(0,1)\times\mathcal{Y}$ is given by
\begin{equation}\label{Deltaz}
\Delta_{\mathfrak{g}}=\Bigg\{\begin{array}{lll} x^{-2}((x\partial_{x})^{2}-(x\partial_{x})) & \text{if}& \nu=1\\
x^{-2}\Big((x\partial_{x})^{2}+\Big(\nu-2+\frac{x\partial_{x}\det[\mathfrak{h}(z,x)]}{2\det[\mathfrak{h}(z,x)]}\Big)(x\partial_{x})+\Delta_{\mathfrak{h}(z,x)}\Big) & \text{if}& \nu>1,
\end{array}
\end{equation}
where $\Delta_{\mathfrak{h}(z,x)}$ is the Laplacian on $\mathcal{Y}$ induced by the metric $\mathfrak{h}(z,x)$.

We have defined $\mathbb{F}$ being the product of $\mathbb{V}$ and a manifold with isolated conical singularities. Based on this, endow $\mathcal{B}$ with a globally defined, positive definite in the interior and degenerate at the boundary metric $\mathfrak{p}$, which, in local coordinates $(x,y)\in[0,1)\times\partial\mathcal{B}$ on a collar neighborhood of the boundary, is of the form
\begin{equation}\label{metricg}
\mathfrak{p}=dx^{2}+x^{2}\mathfrak{q}(x),
\end{equation}
where $[0,1)\ni x \mapsto \mathfrak{q}(x)$ is a smooth family of Riemannian metrics on $\partial\mathcal{B}=\mathcal{Y}$. Denote $\mathbb{B}=(\mathcal{B},\mathfrak{p})$; we call $\mathbb{B}$ {\em conic manifold} or {\em manifold with conical singularities}, which are identified with the set $\{0\}\times\partial\mathcal{B}$. Furthermore, denote $\partial\mathbb{B}=(\partial\mathcal{B},\mathfrak{q}(0))$ and let $\partial\mathcal{B}=\sqcup_{j=1}^{k_{\mathcal{B}}}\partial\mathcal{B}_{j}$, for certain $k_{\mathcal{B}}\in\mathbb{N}$, where $\partial\mathcal{B}_{j}$ are closed, smooth and connected. 

The Laplacian $\Delta_{\mathfrak{p}}$ on $\mathbb{B}$, in local coordinates on the collar part $[0,1)\times\partial\mathcal{B}$, is given by \eqref{Deltaz} with $\mathfrak{h}(z,x)$ replaced by $\mathfrak{q}(x)$. It is a second order degenerate differential operator which belongs to the class of {\em cone differential operators} or {\em Fuchs type operators}, see Section 3. Such operators act naturally on scales of {\em Mellin-Sobolev spaces} $\mathcal{H}_{p}^{s,\gamma}(\mathbb{B})$, $p\in(1,\infty)$, $s,\gamma\in\mathbb{R}$, see Definition \ref{MellSob}. Denote by $H_{q}^{s}(\cdot;\cdot)$ or $H_{q}^{s}(\cdot)$, $q\in(1,\infty)$, $s\in\mathbb{R}$, the usual Bessel potential space (in particular we have $H_{q}^{0}=L^{q}$) and let $H_{q,loc}^{s}(\cdot)$ be the space of locally $H_{q}^{s}(\cdot)$-functions. If in particular $s\in \mathbb{N}_{0}=\mathbb{N}\cup\{0\}$, then $\mathcal{H}_{p}^{s,\gamma}(\mathbb{B})$ is the space of all functions $u$ such that $u\in H^{s}_{p,loc}(\mathbb{B}^{\circ})$ and
\begin{equation}\label{MellSobint}
x^{\frac{\nu}{2}-\gamma}(x\partial_{x})^{k}\partial_{y}^{\alpha}(\omega u) \in L^{p}\big([0,1)\times\partial\mathcal{B},\sqrt{\det[\mathfrak{q}(x)]}\frac{dx}{x}dy\big),\quad k+|\alpha|\leq s,
\end{equation}
where $\omega\in C^{\infty}(\mathbb{B})$ is a fixed cut-off function near the boundary, i.e. a smooth non-negative function on $\mathcal{B}$ with $\omega=1$ near $\{0\}\times\partial\mathcal{B}$ and $\omega=0$ on $\mathcal{B}\backslash([0,1)\times\partial\mathcal{B})$. Furthermore, when it is expressed in local coordinates $(x,y)\in [0,1)\times\partial\mathcal{B}$, we assume that $\omega$ depends only on $x$. We consider $\omega$ as a smooth function on $\mathbb{F}$ by extending it by constant on $\mathcal{V}$. Moreover let $\mathbb{C}_{\omega}$ be the space of all $C^{\infty}(\mathbb{B})$-functions $c$ that vanish on $\mathcal{B}\backslash([0,1)\times\partial\mathcal{B})$ and on each component $[0,1)\times\partial\mathcal{B}_{j}$, $j\in\{1,\dots,k_{\mathcal{B}}\}$, are of the form $c_{j}\omega$, where $c_{j}\in\mathbb{C}$, i.e. $\mathbb{C}_{\omega}$ consists of smooth functions on $\mathbb{B}$ that are locally constant close to the boundary. Endow $\mathbb{C}_{\omega}$ with the norm $c\mapsto (\sum_{j=1}^{k_{\mathcal{B}}}|c_{j}|^{2})^{1/2}$. Note that $\mathbb{C}_{\omega}\hookrightarrow \mathcal{H}_{p}^{s,\gamma}(\mathbb{B})$ if and only if $\gamma<\nu/2$, so that, by $\mathcal{H}_{p}^{s,\gamma}(\mathbb{B})\oplus\mathbb{C}_{\omega}$ we will denote $\mathcal{H}_{p}^{s,\gamma}(\mathbb{B})$ in case of $\gamma<\nu/2$.

Now for $p,q\in(1,\infty)$, $r\geq0$, $s,\gamma\in\mathbb{R}$ and $J\in\{\{0\},\mathbb{C}_{\omega}\}$, we consider the Mellin-Sobolev space valued Bessel potential spaces 
$$
\mathcal{H}_{q,p,\gamma}^{r,s}(\mathbb{F})_{\oplus J}=H_{q}^{r}(\mathbb{V};\mathcal{H}_{p}^{s,\gamma}(\mathbb{B})\oplus J),
$$ 
where, if $J=\{0\}$, then we simply denote $\mathcal{H}_{q,p,\gamma}^{r,s}(\mathbb{F})_{\oplus J}$ by $\mathcal{H}_{q,p,\gamma}^{r,s}(\mathbb{F})$. More precisely, if $\kappa_{j}:V_{j}\rightarrow \mathbb{R}^{n}$, $j\in\{1,\dots,N\}$, $N\in\mathbb{N}$, is a covering of $\mathcal{V}$ by coordinate charts and $\{\phi_{j}\}_{j\in\{1,\dots,N\}}$ a subordinate partition of unity, then 
\begin{equation}\label{nrmHF}
\|\cdot\|_{\mathcal{H}_{q,p,\gamma}^{r,s}(\mathbb{F})_{\oplus J}}=\sum_{j=1}^{N}\|(\kappa_{j})_{\ast}(\phi_{j}\cdot)\|_{H_{q}^{r}(\mathbb{R}^{n};\mathcal{H}_{p}^{s,\gamma}(\mathbb{B})\oplus J)},
\end{equation}
where $(\cdot)_{\ast}$ is the push-forward of distributions. The above space is, up to an equivalence of norms, independent of the choice of the covering $\{\kappa_{j}\}_{j\in\{1,\dots,N\}}$ and the partition $\{\phi_{j}\}_{j\in\{1,\dots,N\}}$.

Let $\mathcal{R}(\theta)$ be the class of {\em $R$-sectorial operators of angle $\theta\in[0,\pi)$}, see Definition \ref{r1sec}. For any $z\in\mathcal{V}$ denote by $\lambda_{z,1}<0$ the greatest nonzero eigenvalue of $\Delta_{\mathfrak{h}(z,0)}$, and let
\begin{equation}\label{lambda1}
\lambda_{1}=\sup_{z\in\mathcal{V}} \lambda_{z,1}<0,
\end{equation}
where the above inequality is due to Lemma \ref{lambda1bound}. Moreover, denote by $\mathcal{D}(A)$ the domain of a linear operator $A$ acting between Banach spaces. Our first result concerns the edge Laplacian $\Delta_{\mathbb{F}}$ and reads as follows.

\begin{theorem}[$R$-sectoriality for the edge Laplacian]\label{thsec33}
Let $p,q\in(1,\infty)$, $r,s\geq0$ and
\begin{equation}\label{gamma2}
\frac{\nu-4}{2}<\gamma< \Bigg\{\begin{array}{lll} 1/2 & \text{if} & \nu=1 \\ \min\Big\{-1+\sqrt{\Big(\frac{\nu-2}{2}\Big)^{2}-\lambda_{1}},\frac{\nu}{2}\Big\} & \text{if} & \nu>1,\end{array}
\end{equation}
where $\lambda_{1}$ is defined in \eqref{lambda1}. Then, the operator $\underline{\Delta}_{\mathfrak{e}}$, defined by 
\begin{equation}\label{DeltaF}
\mathcal{H}_{q,p,\gamma}^{r+2,s}(\mathbb{F})\cap \mathcal{H}_{q,p,\gamma+2}^{r,s+2}(\mathbb{F})_{\oplus\mathbb{C}_{\omega}}=\mathcal{D}(\underline{\Delta}_{\mathfrak{e}})\ni u\mapsto \Delta_{\mathfrak{e}}u \in \mathcal{H}_{q,p,\gamma}^{r,s}(\mathbb{F}),
\end{equation}
is closed. Moreover, for any $\theta\in[0,\pi)$ there exists a $c>0$ such that $c-\underline{\Delta}_{\mathfrak{e}}\in\mathcal{R}(\theta)$.
\end{theorem}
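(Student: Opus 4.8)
The plan is to localise with the partition of unity $\{\phi_{j}\}_{j\in\{1,\dots,N\}}$ of Definition \ref{edgespaces} and reduce to two model situations. On the charts $U_{j}$, $j\in\{N_{0}+1,\dots,N\}$, which sit inside the closed manifold $\mathbb{M}$, the operator $\Delta_{\mathbb{F}}$ is a classical second order elliptic operator, so for a suitable $c_{0}>0$ the operator $c_{0}-\Delta_{\mathbb{M}}$ has a bounded $H^{\infty}$-calculus of angle $0$ on $H_{p}^{r+s}(\mathbb{M})$ and in particular lies in $\mathcal{R}(\theta)$ for every $\theta\in[0,\pi)$ in the sense of Definition \ref{r1sec}. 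Everything therefore rests on the edge charts $U_{j}=V_{j}\times([0,1]\times\mathcal{Y})$, $j\in\{1,\dots,N_{0}\}$, where $\Delta_{\mathbb{F}}$ has the form \eqref{deltaF} and where $\Delta_{\mathbb{V}}$ and $\Delta_{z}$ do \emph{not} commute, since the coefficients of $\Delta_{z}$ in \eqref{Deltaz}, built from $\mathfrak{h}(z,x)$, depend on the edge variable $z$.

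For the edge part I would work in $H_{p}^{r}(\mathcal{V};\mathcal{H}_{p}^{s,\gamma}(\mathbb{B}))$, which through the product structure near $\partial\mathcal{F}$ carries the corresponding piece of $\mathcal{H}_{p,\gamma}^{r,s}(\mathbb{F})$, noting that $\mathcal{H}_{p}^{s,\gamma}(\mathbb{B})$ is a UMD space (isomorphic to a closed subspace of a direct sum of weighted $L^{p}$-spaces), so that the vector-valued harmonic analysis below applies. First I freeze the coefficients of \eqref{Deltaz} at a point $z_{0}\in\mathcal{V}$, obtaining a genuine cone Laplacian $\Delta_{z_{0}}$ on $\mathbb{B}$. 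Since $\gamma$ and $\gamma+2$ obey the bounds \eqref{gamma2}---which keep these weight lines off the boundary spectrum of the cone Laplacian and single out the closed extension $\underline{\Delta}_{z_{0}}$ with domain $\mathcal{H}_{p}^{s+2,\gamma+2}(\mathbb{B})\oplus\mathbb{C}_{\omega}$, with $\lambda_{1}$ from \eqref{lambda1} entering through Lemma \ref{lambda1bound}---the bounded $H^{\infty}$-calculus results for the Laplacian on manifolds with conical singularities give a $c_{1}>0$, uniform for $z_{0}\in\mathcal{V}$, such that $c_{1}-\underline{\Delta}_{z_{0}}$ has a bounded $H^{\infty}$-calculus of arbitrarily small angle on $\mathcal{H}_{p}^{s,\gamma}(\mathbb{B})$; acting fibrewise in $z$ this yields an operator with the same properties on $H_{p}^{r}(\mathcal{V};\mathcal{H}_{p}^{s,\gamma}(\mathbb{B}))$, with domain $H_{p}^{r}(\mathcal{V};\mathcal{H}_{p}^{s+2,\gamma+2}(\mathbb{B})\oplus\mathbb{C}_{\omega})$. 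On the other hand $c-\Delta_{\mathbb{V}}$, $c>0$, has a bounded $H^{\infty}$-calculus of angle $0$ on $H_{p}^{r}(\mathcal{V};\mathcal{H}_{p}^{s,\gamma}(\mathbb{B}))$ with domain $H_{p}^{r+2}(\mathcal{V};\mathcal{H}_{p}^{s,\gamma}(\mathbb{B}))$, by the standard vector-valued theory for the Laplace--Beltrami operator on a closed manifold. These two operators commute---one differentiates in $z$, the other acts on the fibre with constant coefficients---so by the Kalton--Weis sum theorem the operator $(c+c_{1})-(\Delta_{\mathbb{V}}+\underline{\Delta}_{z_{0}})$ is $R$-sectorial of arbitrarily small angle on $H_{p}^{r}(\mathcal{V};\mathcal{H}_{p}^{s,\gamma}(\mathbb{B}))$, with domain $H_{p}^{r+2}(\mathcal{V};\mathcal{H}_{p}^{s,\gamma}(\mathbb{B}))\cap H_{p}^{r}(\mathcal{V};\mathcal{H}_{p}^{s+2,\gamma+2}(\mathbb{B})\oplus\mathbb{C}_{\omega})$, which is precisely the local form of the domain in \eqref{DeltaF}.

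It then remains to restore the $z$-dependence and the lower order term $D$. On a small ball $B_{\delta}(z_{0})\subset\mathcal{V}$ the coefficients of $\Delta_{z}-\Delta_{z_{0}}$ are uniformly $C^{\infty}$-small, so $\Delta_{z}-\Delta_{z_{0}}$ maps the above domain into $H_{p}^{r}(B_{\delta}(z_{0});\mathcal{H}_{p}^{s,\gamma}(\mathbb{B}))$ with small norm, while $D$, being first order in $z$ and of order $0$ in the cone variable, is relatively bounded with relative bound zero; crucially, neither of these perturbations commutes with the frozen model. This is exactly where I would invoke the $R$-sectoriality perturbation technique for non-commuting operators in Bochner spaces developed in the paper, to conclude that $c-(\Delta_{\mathbb{V}}+\Delta_{z}+D)\in\mathcal{R}(\theta)$ over $B_{\delta}(z_{0})$ for suitable $c>0$. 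A finite subcover of $\mathcal{V}$ by such balls, with a subordinate partition of unity, then glues these local statements by the usual localisation procedure for $R$-sectoriality, the cut-off commutators being of lower differential order; patching the resulting edge statement with the interior model via $\{\phi_{j}\}_{j\in\{1,\dots,N\}}$ gives a $c>0$ with $c-\underline{\Delta}_{\mathbb{F}}\in\mathcal{R}(\theta)$. Closedness of $\underline{\Delta}_{\mathbb{F}}$ with the domain \eqref{DeltaF} then follows, since $R$-sectorial operators are closed and since that domain is the maximal one, this last point being the elliptic regularity statement for cone operators transferred to $\mathbb{F}$ by the same localisation.

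The main obstacle is precisely the non-commutativity: $\Delta_{z}-\Delta_{z_{0}}$ has the same order as the frozen model and does not commute with it, so the classical perturbation theorems for $R$-sectoriality and for bounded $H^{\infty}$-calculus do not apply directly; removing this obstruction is the purpose of the Bochner-space perturbation machinery. A secondary, more technical point is to carry out the freezing, the perturbation estimates and the localisation with constants that are uniform in $z_{0}$ and in the localisation parameter, so that a single $c$ works for the prescribed angle $\theta$.
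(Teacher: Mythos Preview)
Your proposal is correct and follows essentially the same route as the paper: freeze the $z$-dependence of the cone Laplacian, apply Kalton--Weis to the resulting commuting pair, perturb back, and glue via a partition of unity together with the closed-manifold model on the interior charts. The paper organises the argument a little differently. Instead of keeping $\Delta_{\mathbb{V}}$ as a variable-coefficient operator on $\mathcal{V}$ and then invoking Theorem~\ref{t1} (which, as stated, lives on $\mathbb{R}^{n}$), it performs a single localisation on $\mathbb{F}$ by geodesic balls in a desingularised metric, centred either on $\partial\mathcal{F}$ or in the interior; at the boundary balls it also freezes the coefficients of $\Delta_{\mathbb{V}}+D$ to constants (Proposition~\ref{rsecforfreezing}), which keeps the frozen model squarely in the $\mathbb{R}^{n}$-framework of Theorem~\ref{t1}. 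The paper moreover carries out the full freezing--localisation argument only for $r=0$ and then reaches general $r\geq0$ by induction on $r\in\mathbb{N}_{0}$ (differentiating the associated parabolic problem and using the commutator structure coming from \eqref{deltaF}) followed by complex interpolation; your sketch implicitly handles all $r$ at once, which also works provided one supplies the $H^{\infty}$-calculus input at the $H_{p}^{r}$-level (Proposition~\ref{p2}). One technical point you pass over is the control of the cut-off commutators in the left/right inverse construction: the paper uses the mixed-derivative embedding of Corollary~\ref{sharpmixder} to place these commutators in a fractional domain $\mathcal{D}(A_{j}^{a})$, which is what makes the Neumann series and the $R$-bound estimate converge. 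Finally, closedness with the domain \eqref{DeltaF} is not obtained via a maximal-domain/elliptic-regularity argument --- that domain is a specific closed extension, not the maximal one --- but simply as a byproduct of having built a two-sided inverse in $\mathcal{L}(X_{0}^{0},X_{1}^{0})$.
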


For the proof of the previous theorem we introduce an $R$-sectoriality perturbation technique for non-commuting operators defined in Bochner spaces. It combines the freezing-of-coefficients method, the Kalton-Weis operator valued functional calculus results and $R$-sectoriality perturbation theory; see the proof of Theorem \ref{t1} below. Denote by $W^{k,p}(\cdot;\cdot)$, $k\in\mathbb{N}_{0}$, $p\in(1,\infty)\cup\{\infty\}$, the usual Sobolev space, by $\mathcal{L}(\cdot,\cdot)$ or $\mathcal{L}(\cdot)$ the space of bounded linear operators acting between Banach spaces and let $\mathcal{H}^{\infty}(\theta)$, $\mathcal{RH}^{\infty}(\theta)$ be the class of operators having {\em bounded $H^{\infty}$-calculus} and {\em $R$-bounded $H^{\infty}$-calculus of angle $\theta\in[0,\pi)$}, respectively, see Definition \ref{hinftycal} and Definition \ref{Rbhinftycal}. Moreover recall the class of UMD Banach spaces, i.e. the Banach spaces that possess the {\em unconditionality of martingale differences property}, see e.g. \cite[Section III.4.4]{Am} for a precise definition.

\begin{theorem}\label{t1}
Let $\mu\in\mathbb{N}$, $q\in(1,\infty)$, $\theta_{A},\theta_{B}\in(0,\pi)$ with $\theta_{A}+\theta_{B}>\pi$, $\mathbb{K}\in \{\mathbb{R}^{n},\mathbb{V}\}$ and let $X_{1}\hookrightarrow X_{0}$ be a continuously and densely injected complex Banach couple with $X_{0}$ being UMD. Moreover, let $A$ be an $\mu$-th order differential operator with coefficients in $W^{1,\infty}(\mathbb{K};\mathbb{C})$ and $B(\cdot)\in C(\mathbb{K};\mathcal{L}(X_{1},X_{0}))$, such that, when $\mathbb{K}=\mathbb{R}^{n}$ we assume that $B(\cdot)$ is constant on $\mathbb{R}^{n}\backslash \mathcal{K}$, for certain compact set $\mathcal{K}\subset\mathbb{R}^{n}$. Consider the operators
$$
A:\mathcal{D}(A)=H_{q}^{\mu}(\mathbb{K};X_{0})\rightarrow L^{q}(\mathbb{K};X_{0})\quad\text{and} \quad B:\mathcal{D}(B)=L^{q}(\mathbb{K};X_{1})\rightarrow L^{q}(\mathbb{K};X_{0}),
$$ 
where $(Bu)(z)=B(z)u(z)$ for almost all $z\in\mathbb{K}$ when $u\in \mathcal{D}(B)$. Assume that for any $z\in\mathbb{K}$ there exists a $c\geq0$ depending on $z$, such that at least one of the following two conditions holds true:\\
{\bf (i)} $A\in \mathcal{H}^{\infty}(\theta_{A})$ and $B(z)+c \in \mathcal{RH}^{\infty}(\theta_{B})$.
{\bf (ii)} $A\in \mathcal{RH}^{\infty}(\theta_{A})$ and $B(z)+c \in \mathcal{H}^{\infty}(\theta_{B})$.\\
Then, $A+B$ with domain $H_{q}^{\mu}(\mathbb{K};X_{0})\cap L^{q}(\mathbb{K};X_{1})$ in $L^{q}(\mathbb{K};X_{0})$ is closed and furthermore there exists a $c_{0}\geq 0$ such that $A+B+c_{0}\in \mathcal{R}(\theta_{A+B})$, where $\theta_{A+B}=\min\{\theta_{A},\theta_{B}\}$.
\end{theorem}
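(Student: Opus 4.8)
The strategy is a Kalton–Weis sum-type argument combined with a localization/freezing-of-coefficients perturbation. The key point is that $A$ and $B$ do not commute (the coefficients of $B(\cdot)$ depend on $z$, and $A$ differentiates in $z$), so the classical Dore–Venni / Kalton–Weis theorem cannot be applied directly; instead one first treats the case where $B$ has constant coefficients, and then removes the variation of $B(\cdot)$ by a perturbation argument.

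\emph{Step 1: the commuting model case.} First I would consider, for a fixed point $z_{0}\in\mathbb{R}^{n}$, the operator $A+B_{z_{0}}$, where $B_{z_{0}}$ is the operator of pointwise multiplication by the \emph{constant} operator $B(z_{0})+c$ (with $c$ the shift from hypothesis (i) or (ii)). Since $B(z_{0})+c$ acts only in the fibre $X_{0}$ and $A$ acts only in the base variable $z$, the resolvents of $A$ and of $B_{z_{0}}$ commute. Moreover, $A$ inherits its $H^{\infty}$- or $R H^{\infty}$-calculus on $L^{p}(\mathbb{R}^{n};X_{0})$ from its calculus on $X_{0}$ by the standard tensor extension (here UMD of $X_{0}$, hence of $L^{p}(\mathbb{R}^{n};X_{0})$, is used), and likewise $B_{z_{0}}$ inherits the calculus of $B(z_{0})+c$. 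Under hypothesis (i) we have $A\in\mathcal H^{\infty}(\theta_A)$ and $B_{z_0}\in\mathcal{RH}^{\infty}(\theta_B)$ (or vice versa under (ii)); since $\theta_A+\theta_B>\pi$, the Kalton–Weis theorem on sums of commuting sectorial operators — one with bounded $H^\infty$-calculus, the other $R$-sectorial of complementary angle — yields that $A+B_{z_0}$ is closed on $H^{\mu}_p(\mathbb R^n;X_0)\cap L^p(\mathbb R^n;X_1)$ and, after a further shift $c_0'\geq 0$, lies in $\mathcal R(\theta_{A+B})$ with $\theta_{A+B}=\min\{\theta_A,\theta_B\}$. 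In fact the Kalton–Weis machinery gives more, namely joint $R$-sectoriality estimates that are uniform in $z_0$ because the relevant $R$-bounds of $B(z_0)+c$ can be taken uniform in $z_0$ (the map $z\mapsto B(z)$ is continuous and constant outside the compact set $\mathcal K$, hence its range is relatively compact, so the $H^\infty$/$R H^\infty$ bounds are uniform — this is where the continuity and eventual-constancy hypotheses enter).

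\emph{Step 2: localization and freezing of coefficients.} Next I would fix $\lambda$ in a sector $\Sigma_{\pi-\theta_{A+B}}$ with $|\lambda|$ large, cover $\mathbb R^n$ by finitely many small balls together with the (possibly unbounded) region where $B(\cdot)$ is constant, pick a subordinate partition of unity $\{\psi_k\}$ and functions $\widetilde\psi_k$ with $\widetilde\psi_k\psi_k=\psi_k$, and write, for $u$ in the domain,
$$
(\lambda+c_0+A+B)u=\sum_k \big[(\lambda+c_0+A+B(z_k))(\psi_k u)\big]+\sum_k\big[(B(\cdot)-B(z_k))\psi_k u\big]+[\text{commutator terms }[A,\psi_k]u],
$$
where $z_k$ is the centre of the $k$-th ball. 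On each patch the first group of terms is inverted by Step 1 applied to the constant-coefficient operator $A+B(z_k)$, with norm bounds uniform in $k$ and in $\lambda$. The commutator $[A,\psi_k]$ is a differential operator of order $\le \mu-1$ with $W^{1,\infty}$ (indeed bounded) coefficients — here I use that the coefficients of $A$ lie in $W^{1,\infty}(\mathbb R^n;\mathbb C)$ so that $[A,\psi_k]$ makes sense and maps $H^{\mu}_p\to L^p$ with a gain of one derivative — hence it is of lower order relative to $\lambda+c_0+A+B(z_k)$ and can be absorbed for $|\lambda|$ large by the standard interpolation inequality $\|A'v\|\le \varepsilon\|A v\|+C_\varepsilon|\lambda|^{\,?}\|v\|$ type estimate, combined with the resolvent bounds from Step 1. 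The term $(B(\cdot)-B(z_k))\psi_k u$ is small in $\mathcal L(X_1,X_0)$-operator norm on the support of $\psi_k$ by uniform continuity of $B(\cdot)$, provided the balls are chosen small enough; together with $B\colon L^p(\mathbb R^n;X_1)\to L^p(\mathbb R^n;X_0)$ being lower order relative to the domain norm (an interpolation-inequality argument using $X_1\hookrightarrow X_0$), this term too is absorbed.

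\emph{Step 3: assembling the parametrix and upgrading to $R$-sectoriality.} Summing the local inverses against $\widetilde\psi_k$ produces, for $|\lambda|$ large in the sector, a two-sided approximate inverse $R(\lambda)$ of $\lambda+c_0+A+B$ with remainder of norm $\le 1/2$, whence $\lambda+c_0+A+B$ is boundedly invertible there with $\|\lambda(\lambda+c_0+A+B)^{-1}\|\le M$. To get the stronger $R$-bound rather than mere uniform boundedness, I would run exactly the same localization but applied to finite families $\{\lambda_1,\dots,\lambda_m\}$ and $\{v_1,\dots,v_m\}$: the $R$-sectoriality of each local constant-coefficient piece from Step 1 is stable under the finite sum and under multiplication by the fixed scalar cut-offs $\psi_k,\widetilde\psi_k$ (scalars are trivially $R$-bounded), the commutator and coefficient-oscillation remainders are $R$-bounded with small $R$-bound by the same absorption estimates (here one uses that $R$-boundedness passes through the relevant interpolation inequalities, e.g. via Kahane's contraction principle), and a Neumann-series argument in the $R$-bounded category produces an $R$-bound for $\{\lambda(\lambda+c_0+A+B)^{-1}\}$. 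Finally, absorbing the shift into $c_0$ and noting that closedness of $A+B$ on $H^\mu_p(\mathbb R^n;X_0)\cap L^p(\mathbb R^n;X_1)$ follows from invertibility of $\lambda+c_0+A+B$ for a single such $\lambda$, one obtains $A+B+c_0\in\mathcal R(\theta_{A+B})$ as claimed.

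\emph{Main obstacle.} The delicate point is Step 2: because $A$ and $B$ do not commute, the remainder in the parametrix contains the genuinely operator-valued term $(B(\cdot)-B(z_k))\psi_k u$, and one must show this is both small (uniform continuity, small balls) \emph{and} controlled in the right norm after applying the local resolvent — i.e. that $B$ is "lower order" relative to the graph norm of the model operator uniformly in the patch and in $\lambda$. Making the absorption quantitative, so that the small constant beats the (uniformly bounded, but not decaying) resolvent constant $M$ from Step 1, while simultaneously keeping everything $R$-bounded, is the technical heart of the argument; the eventual-constancy of $B(\cdot)$ outside $\mathcal K$ is what makes the cover finite and the constants uniform.
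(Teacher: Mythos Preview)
Your strategy---freeze at each point, invoke Kalton--Weis/Pr\"uss--Simonett on the commuting pair $(A,\,B(z_0)+c)$, then localize via a partition of unity and absorb oscillation and commutator remainders by Neumann series---is exactly the paper's. Your three steps track the paper's seven, and your identification of the ``main obstacle'' (controlling the $B$-oscillation in the graph norm, uniformly in patch and $\lambda$) is on target.

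One point, however, needs more than the ``standard interpolation inequality'' you invoke. For the \emph{right} parametrix, making $[A,\psi_k](\lambda+A+B(z_k)+c)^{-1}$ small in $\mathcal L(E_0)$ works just as you say. But your claimed ``two-sided approximate inverse with remainder of norm $\le 1/2$'' also requires the \emph{left} remainder $(\lambda+A+B(z_k)+c)^{-1}[A,\psi_k]$ to be small as an operator on the domain $H^\mu_p(\mathbb R^n;X_0)\cap L^p(\mathbb R^n;X_1)$, and here the local resolvent carries only a \emph{bounded}, not $\lambda$-decaying, constant as a map $E_0\to\text{domain}$. The inequality $\|[A,\psi_k]u\|_{E_0}\le\varepsilon\|Au\|_{E_0}+C_\varepsilon\|u\|_{E_0}$ does not close this loop by itself. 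The paper supplies the missing ingredient as a separate lemma: for resolvent-commuting $A\in\mathcal H^\infty$, $B\in\mathcal R$ with $\theta_A+\theta_B>\pi$ one has
\[
\|(A+B)(A+B+\lambda)^{-1}A^{-\rho}\|_{\mathcal L(E_0)}\le \frac{K}{1+|\lambda|^\rho},\qquad \rho\in(0,1),
\]
proved by a Dunford-integral contour computation. Combined with the observation that $[A,\phi_j]$ maps the domain into $H^1_p(\mathbb R^n;X_0)\hookrightarrow\mathcal D(A_j^\rho)$ for small $\rho>0$, and with the Neumann-series formula expressing $(A_j+\lambda)^{-1}$ in terms of $(A+B(z_j)+c+\lambda)^{-1}$, this yields the graph-norm decay needed for the left inverse. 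Without it, closedness of $A+B$ on the intersection domain is not established by your outline.

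A smaller point: uniformity in $z_0$ of the shift $c$ and of the $R$-sectorial bound for $A+B(z_0)+c$ does not follow from relative compactness of $\{B(z)\}$ alone, since the hypothesis allows $c=c(z)$ and gives no a~priori control on it. The paper closes this with short contradiction arguments (its Steps~2--3), showing via the perturbation formula $(A+B(z)+\tilde c+\lambda)^{-1}=(A+B(w)+\tilde c+\lambda)^{-1}\sum_k((B(w)-B(z))(A+B(w)+\tilde c+\lambda)^{-1})^k$ that both the minimal shift and the $R$-bound are locally stable, hence uniformly bounded over the compact $\mathcal K$.
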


There are plenty of examples of operators in the class $\mathcal{H}^{\infty}(\theta)$, $\theta\in[0,\pi)$, see e.g. \cite{Am4} for $L^{ p}$-realizations of general elliptic systems on $\mathbb{R}^{n}$ or on compact manifolds without boundary, under weak conditions on their coefficients. Furthermore, due to \cite[Theorem 5.3]{KaW}, an operator in $\mathcal{H}^{\infty}(\theta)$ also belongs to $\mathcal{RH}^{\infty}(\theta)$ if the underlying space admits Pisier's {\em property $(\alpha)$}, see e.g. \cite[Definition 4.2.7]{PS2} for the definition of the property $(\alpha)$.

Recall that in UMD Banach spaces $R$-sectoriality characterizes maximal $L^{q}$-regularity, i.e. the well-posedness of the related abstract linear parabolic Cauchy's problem in the $L^{d}$-setting, $d\in(1,\infty)$, and furthermore, allows us to obtain classical solutions for quasilinear parabolic problems, see Section 2. In this direction, as an application, we consider the {\em porous medium equation} (PME for short) on $\mathbb{F}$, namely the following semilinear parabolic problem 
\begin{eqnarray}\label{PME1}
u'(t)-\Delta_{\mathfrak{e}} u^{m}(t) &=& f(u(t),t),\quad t\in(0,T],\\\label{PME2}
u(0)&=&u_{0}.
\end{eqnarray}
Here $(\cdot)'=\partial_{t}(\cdot)$, $m>0$ is a fixed parameter, $\mathcal{U}\times[0,T_{0}]\ni(\lambda,t)\mapsto f(\lambda,t)\in\mathbb{C}$ is holomorphic in $\lambda$ and Lipschitz in $t$, for some open neighborhood $\mathcal{U}$ of $\mathrm{Ran}(u_{0})\subseteq\mathbb{C}$ and some $T_{0}>0$, and $T\in(0,T_{0}]$. The evolution described by \eqref{PME1}-\eqref{PME2} can model heat transfer, fluid flow, or gas diffusion in porous media; it can also be regarded as a nonlinear version of the heat equation.

For an introduction to the theory of the PME considered on domains in $\mathbb{R}^{n}$ and on smooth compact manifolds, we refer to \cite{AP}, \cite{BG}, \cite{DHa}, \cite{Otto}, \cite{Va} and to the references therein, where the problem has been extensively studied under many aspects. Concerning singular spaces, in \cite{GrMu}, \cite{GMP}, \cite{GMV}, \cite{GMV2} and \cite{Va0} the PME is considered on manifolds with negative curvature. Among other properties, it is shown existence, uniqueness, regularity, smoothing effects, long time behaviour and bounds for solutions, as well as logarithmic growth of the location of the free boundary. Furthermore, in \cite{RS2}, \cite{RS}, \cite{RSS} and \cite{SS} the PME is studied on manifolds with isolated conical singularities. In addition to existence, uniqueness and regularity results for solutions, it is also obtained their asymptotic behaviour near the singular points in terms of the local geometry. Concerning the PME on manifolds with edges, we show the following.

\begin{theorem}\label{pmeonF}
Assume that $\lambda_{1}$ defined in \eqref{lambda1} satisfies $-\lambda_{1}>\nu-1$ when $\nu>1$. Let $r\in\mathbb{N}_{0}$, $s\geq0$, choose $\delta\in(0,1)$ sufficiently small and $p,q,d\in(1,\infty)$ sufficiently large such that $\nu<p$, $n<q$ and
$$
\frac{2}{d}+\frac{n}{q}<\frac{1}{d}+\delta\Big(1-\frac{1}{d}\Big)<\Bigg\{\begin{array}{lll} 1-\frac{1}{p} & \text{if} & \nu=1 \\ \min\Big\{1-\frac{\nu}{p},-\frac{\nu}{2}+\sqrt{\Big(\frac{\nu-2}{2}\Big)^{2}-\lambda_{1}}\Big\} & \text{if} & \nu>1,\end{array}
$$
and let
\begin{equation}\label{gam2ma}
\frac{\nu-2}{2}+\frac{1}{d}+\delta\Big(1-\frac{1}{d}\Big)<\gamma< \Bigg\{\begin{array}{lll} 1/2 & \text{if} & \nu=1 \\ \min\Big\{-1+\sqrt{\Big(\frac{\nu-2}{2}\Big)^{2}-\lambda_{1}},\frac{\nu}{2}\Big\} & \text{if} & \nu>1.\end{array}
\end{equation}
Then, for any strictly positive $u_{0}$ such that $u_{0}^{m},u_{0}^{\frac{m-1}{m}}\in \mathcal{H}_{q,p,\gamma+2}^{r+2,s+2}(\mathbb{F})_{\oplus\mathbb{C}_{\omega}}$ there exists a $T>0$ and a unique $u: [0,T]\times\mathbb{F}\rightarrow \mathbb{C}$ with the following properties:
\begin{equation}\label{regofu1}
u\in H_{d}^{1}(0,T;\mathcal{H}_{q,p,\gamma}^{r,s}(\mathbb{F})),
\end{equation}
\begin{eqnarray}\nonumber
\lefteqn{u^{m}\in L^{d}(0,T;\mathcal{H}_{q,p,\gamma}^{r+2,s}(\mathbb{F})\cap \mathcal{H}_{q,p,\gamma+2}^{r,s+2}(\mathbb{F})_{\oplus\mathbb{C}_{\omega}})}\\\label{regofu2}
&&\cap \bigcap_{\varepsilon>0} C([0,T]; \mathcal{H}_{q,p,\gamma+(1-\delta)(1-\frac{1}{d})-\varepsilon}^{r+(1+\delta)(1-\frac{1}{d})-\varepsilon,s+(1-\delta)(1-\frac{1}{d})-\varepsilon}(\mathbb{F})_{\oplus\mathbb{C}_{\omega}})
\end{eqnarray}
and $u$ satisfies \eqref{PME1}-\eqref{PME2}.
\end{theorem}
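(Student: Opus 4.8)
The plan is to recast \eqref{PME1}--\eqref{PME2} as a quasilinear abstract parabolic Cauchy problem in the UMD space $X_{0}=\mathcal{H}_{p,\gamma}^{r,s}(\mathbb{F})$, with maximal regularity domain $X_{1}=\mathcal{H}_{p,\gamma}^{r+2,s}(\mathbb{F})\cap\mathcal{H}_{p,\gamma+2}^{r,s+2}(\mathbb{F})_{\oplus\mathbb{C}_{\omega}}=\mathcal{D}(\underline{\Delta}_{\mathbb{F}})$ as in \eqref{DeltaF}, and to apply the general short time existence, uniqueness and maximal $L^{q}$-regularity theory for such problems recalled in Section 2. Since $u_{0}$, hence the solution $u$ on a short time interval, is strictly positive, the chain rule gives $\Delta_{\mathbb{F}}(u^{m})=m\,u^{m-1}\Delta_{\mathbb{F}}u+m(m-1)u^{m-2}|\nabla_{\mathbb{F}}u|^{2}$, so that \eqref{PME1} becomes
$$
u'(t)+A(u(t))u(t)=F(u(t),t),\qquad A(v)=-m\,v^{m-1}\Delta_{\mathbb{F}}=-M_{mv^{m-1}}\underline{\Delta}_{\mathbb{F}},
$$
with $F(v,t)=m(m-1)v^{m-2}|\nabla_{\mathbb{F}}v|^{2}+f(v,t)$, where $M_{\phi}$ denotes multiplication by $\phi$. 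The phase space for the initial value is the real interpolation space $X_{q}:=(X_{0},X_{1})_{1-\frac1q,q}$. The embedding $\mathcal{H}_{p,\gamma+2}^{r+2,s+2}(\mathbb{F})_{\oplus\mathbb{C}_{\omega}}\hookrightarrow X_{1}\hookrightarrow X_{q}$ is immediate from weight monotonicity, so $u_{0}$ is an admissible datum; the real purpose of the hypotheses $n,\nu<p$, the displayed chain of inequalities for $\delta$, the restriction \eqref{gam2ma} on $\gamma$ and the assumption $-\lambda_{1}>\nu-1$ for $\nu>1$ (which is exactly what makes the admissible windows for $\gamma$ and $\delta$ nonempty) is to secure the structural fact on which everything rests: that $X_{1}$, $X_{q}$, and every space slightly larger than $X_{q}$ in the Mellin--Sobolev scale embed continuously into a Banach algebra of bounded functions on $\mathbb{F}$ that is stable under multiplication by smooth, strictly positive, edge-asymptotically-constant functions.

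First I would verify $R$-sectoriality of the linearisation. For $v$ in a small ball $\mathcal{O}$ about $u_{0}$ in $X_{q}$, Sobolev embedding gives that $v$ is continuous and bounded away from zero on $\mathbb{F}$, so $v^{m-1}$, $v^{1-m}$ together with their first derivatives are bounded multipliers; hence $M_{mv^{m-1}}$ is an isomorphism of both $X_{0}$ and $X_{1}$, $A(v)$ has domain exactly $X_{1}$, and $\mathcal{O}\ni v\mapsto A(v)\in\mathcal{L}(X_{1},X_{0})$ is continuous. At the reference point there are $c_{0}>0$ and $\theta'<\pi/2$ with $c_{0}+A(u_{0})\in\mathcal{R}(\theta')$: this follows from Theorem \ref{thsec33} by a freezing-of-coefficients argument — locally $m\,u_{0}^{m-1}$ is close to a positive constant, so $A(u_{0})$ is locally close to a constant positive multiple of $-\underline{\Delta}_{\mathbb{F}}$ — combined with $R$-sectoriality perturbation theory and the non-commuting perturbation methods of the present paper, exactly as for the corresponding statement on conic manifolds. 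Shrinking $\mathcal{O}$, the difference $A(v)-A(u_{0})=-M_{m(v^{m-1}-u_{0}^{m-1})}\underline{\Delta}_{\mathbb{F}}$ is a relatively small perturbation of $A(u_{0})$ — writing $\Delta_{\mathbb{F}}u=-\frac1m u_{0}^{1-m}A(u_{0})u$ and using that $(v^{m-1}-u_{0}^{m-1})u_{0}^{1-m}$ has small multiplier norm on $X_{0}$ — so $c_{0}+A(v)\in\mathcal{R}(\theta')$ uniformly for $v\in\mathcal{O}$; in the UMD space $X_{0}$ this is equivalent to maximal $L^{q}$-regularity of $A(v)$.

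Next I would check the nonlinearity. The functions $\lambda\mapsto\lambda^{m},\lambda^{m-1},\lambda^{m-2}$ are holomorphic on a neighbourhood of $\mathrm{Ran}(u_{0})\subset(0,\infty)$, so by the Banach-algebra property above they induce locally Lipschitz (indeed $C^{1}$) Nemytskii maps between the relevant function spaces; together with the assumed holomorphy of $f$ in $\lambda$ and Lipschitz dependence on $t$, and a product estimate on the Mellin--Sobolev scale controlling the quadratic first-order term $m(m-1)v^{m-2}|\nabla_{\mathbb{F}}v|^{2}$ in $X_{0}$, this gives that $F$ is locally Lipschitz from $X_{q}\times[0,T_{0}]$ into $X_{0}$. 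With these two inputs the quasilinear theory of Section 2 yields $T>0$ and a unique $u\in H_{q}^{1}(0,T;X_{0})\cap L^{q}(0,T;X_{1})$ solving \eqref{PME1}--\eqref{PME2}, which gives \eqref{regofu1}; applying the Nemytskii map $v\mapsto v^{m}$, which preserves $L^{q}(0,T;X_{1})$ again by the chain-rule identity and the algebra estimates, gives $u^{m}\in L^{q}(0,T;X_{1})$, the first membership in \eqref{regofu2}. For the continuity assertion in \eqref{regofu2} one uses $H_{q}^{1}(0,T;X_{0})\cap L^{q}(0,T;X_{1})\hookrightarrow C([0,T];X_{q})$, applied to $u^{m}$, together with the identification — up to an arbitrarily small loss $\varepsilon>0$ in all three indices — of $X_{q}$ with $\mathcal{H}_{p,\gamma+(1-\delta)(1-\frac1q)-\varepsilon}^{r+(1+\delta)(1-\frac1q)-\varepsilon,\,s+(1-\delta)(1-\frac1q)-\varepsilon}(\mathbb{F})_{\oplus\mathbb{C}_{\omega}}$ obtained by interpolating the Mellin--Sobolev scale; the $\delta$ and $\varepsilon$ losses are forced because $X_{1}$ is an intersection of spaces carrying different weights, so $(X_{0},X_{1})_{1-\frac1q,q}$ is only trapped between such scales.

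The main obstacle is the ``multiplication algebra'' package for the spaces $\mathcal{H}_{p,\gamma}^{r,s}(\mathbb{F})_{\oplus J}$: showing that $u^{m}$, $u^{m-1}$, $u^{1-m}$ act as bounded multipliers with the enriched component $\mathbb{C}_{\omega}$ correctly reproduced — here strict positivity of $u_{0}$ is crucial, since near the edge $u=$ (nonzero constant) $+$ (lower order term), whence $u^{m}=$ (nonzero constant) $+$ (lower order term) again lies in the $\oplus\mathbb{C}_{\omega}$ space, and likewise for $u^{1-m}$ — that the interpolation spaces of the scale embed into such algebras, that the required product estimates hold, and that multiplication by $m\,u_{0}^{m-1}$ applied to $\underline{\Delta}_{\mathbb{F}}$ stays within reach of the perturbation machinery behind Theorem \ref{thsec33}. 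All the arithmetic constraints on $p,q,\delta,\gamma,n,\nu$ and $\lambda_{1}$ are dictated by this bookkeeping; once it is in place, the abstract results of Section 2 apply essentially verbatim.
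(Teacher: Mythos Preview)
Your reformulation differs from the paper's at the very first step, and the difference is not cosmetic. The paper does \emph{not} expand $\Delta_{\mathbb{F}}(u^{m})$ via the chain rule; instead it substitutes $v=u^{m}$, turning \eqref{PME1}--\eqref{PME2} into
\[
v'(t)-m\,v^{\frac{m-1}{m}}(t)\,\Delta_{\mathbb{F}} v(t)=m\,v^{\frac{m-1}{m}}(t)\,f\big(v^{\frac{1}{m}}(t),t\big),\qquad v(0)=u_{0}^{m},
\]
so that the right-hand side is purely of order zero in $v$. Theorem~\ref{ClementLi} is then applied to $v$; maximal regularity of $A(u_{0}^{m})=-m\,u_{0}^{m-1}\underline{\Delta}_{\mathbb{F}}$ comes from Theorem~\ref{maxreglinterm}, (H1)--(H2) are checked via Lemma~\ref{propF}~(vii) with no gradient term to estimate, and afterwards the stated regularity of $u=v^{1/m}$ is recovered through the holomorphic functional calculus on the $\mathcal{H}_{p,\gamma}^{r,s}(\mathbb{F})_{\oplus\mathbb{C}_{\omega}}$-scale.

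Your route forces the quadratic gradient term $m(m-1)v^{m-2}|\nabla_{\mathbb{F}}v|^{2}$ into $F$, and this is a genuine gap. Hypothesis (H2) of Theorem~\ref{ClementLi} requires $F$ to map an open set in the trace space $(X_{1},X_{0})_{1/q,q}$ into $X_{0}=\mathcal{H}_{p,\gamma}^{r,s}(\mathbb{F})$. For $v=c\omega+w$ in that trace space, Lemma~\ref{tracespace} gives $w$ a Mellin weight of only about $\gamma_{w}=\gamma+(1-\delta)(1-\tfrac{1}{q})$. Near the edge the cone components of $\nabla_{\mathbb{F}}w$ (namely $\partial_{x}w$ and $x^{-1}\nabla_{\mathfrak{h}}w$) carry weight $\gamma_{w}-1$, so their squares carry weight $2(\gamma_{w}-1)-\tfrac{\nu}{2}$; membership in $X_{0}$ would then force
\[
\gamma\ \geq\ \tfrac{\nu}{2}+2-2(1-\delta)\Big(1-\tfrac{1}{q}\Big)\ >\ \tfrac{\nu}{2},
\]
contradicting the upper bound $\gamma<\tfrac{\nu}{2}$ built into \eqref{gam2ma}. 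Hence $|\nabla_{\mathbb{F}}v|^{2}$ does not land in $X_{0}$ for $v$ in the phase space, and the ``product estimate on the Mellin--Sobolev scale'' you invoke cannot hold under the stated constraints. The substitution $v=u^{m}$ is precisely the device that eliminates this obstruction; it is essential here, as it already was in the conic case \cite{RS2}.
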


Note that there is no $\lambda_{1}$-assumption in the above theorem in the case of $\nu=1$. Furthermore, $u$ is a classical solution of \eqref{PME1}-\eqref{PME2}. Theorem \ref{pmeonF} together with the Sobolev embedding theorem for the $\mathcal{H}_{q,p,\gamma}^{r,s}(\mathbb{F})_{\oplus\mathbb{C}_{\omega}}$-spaces provides us with the asymptotic behaviour of the solution $u$ near the singularities of $\mathbb{F}$. More precisely, by \eqref{regofu2} and Lemma \ref{propF} (v), 
$$
u^{m}\in C([0,T];C(\mathbb{V};\mathcal{H}_{p}^{s+(1-\delta)(1-\frac{1}{d})-\varepsilon,\gamma+(1-\delta)(1-\frac{1}{d})-\varepsilon}(\mathbb{B})\oplus \mathbb{C}_{\omega}))\hookrightarrow C([0,T];C(\mathbb{F}))
$$ 
and there exists a $c\in C([0,T];C(\mathbb{F}))$ such that, in local coordinates $(z,x,y)\in \mathcal{V}\times[0,1)\times\mathcal{Y}$, we have $c=c(t,z)$, 
$$
u^{m}(t,z,\cdot,\cdot)-\omega c(t,z)\in \mathcal{H}_{p}^{s+(1-\delta)(1-\frac{1}{d})-\varepsilon,\gamma+(1-\delta)(1-\frac{1}{d})-\varepsilon}(\mathbb{B})\hookrightarrow C(\mathbb{B})
$$ 
and
\begin{eqnarray*}
\lefteqn{|u^{m}(t,z,x,y)-\omega c(t,z)|}\\
&\leq& Cx^{\gamma+(1-\delta)(1-\frac{1}{d})-\frac{\nu}{2}-\varepsilon}\|u^{m}(t,z,\cdot,\cdot)-\omega c(t,z)\|_{\mathcal{H}_{p}^{s+(1-\delta)(1-\frac{1}{d})-\varepsilon,\gamma+(1-\delta)(1-\frac{1}{d})-\varepsilon}(\mathbb{B})},
\end{eqnarray*}
for some constant $C>0$ depending only on $\nu$, $p$, $q$, $d$, $s$, $\delta$ and $\varepsilon$. Hence, in particular, the interplay of the local geometry near the singularity with the evolution can be seen by the choice \eqref{gam2ma}.

Concerning manifolds with incomplete edge singularities, in \cite{BaVe1}, \cite{BaVe2}, \cite{LyVe} and \cite{Vertm} it has been shown short or long time existence and convergence for certain geometric flows on such spaces. However, both the underlying singular analysis theory as well as the linear and quasilinear PDE techniques differ from those in this article. We also refer to \cite{KaSc} for a comprehensive theory providing parametrices construction of edge boundary value problems and elliptic regularity in appropriate scales of weighted spaces. Finally, we refer to \cite{GKM3} for the closed extensions and resolvents of elliptic operators on manifolds with edges and to \cite{KM} for the study of the Friedrichs extension of second order elliptic wedge operators. Note that all of the above situations involve manifolds with edges in a broader or more general sense. 

The rest of the article is organised as follows. In Section 2 we recall some basics on the maximal $L^{q}$-regularity theory for linear and quasilinear parabolic problems and prove Theorem \ref{t1}. In Section 3 we present some previous results for the Laplacian on manifolds with conical singularities, regarded as a cone differential operator, and then provide certain properties of the $\mathcal{H}_{q,p,\gamma}^{r,s}(\mathbb{F})$-spaces, useful for treating nonlinear PDE. In Section 4, using the results of Section 3 together with the perturbation technique from the proof of Theorem \ref{t1}, we prove Theorem \ref{thsec33}. An application to the PME on edge manifolds and a proof of Theorem \ref{pmeonF} is provided in Section 5.

\section{Maximal $L^{d}$-regularity for parabolic equations}

Let $X_{1}\hookrightarrow X_{0}$ be a continuously and densely injected complex Banach couple. 

\begin{definition}[Sectoriality]\label{secdef}
Let $\mathcal{P}(K,\theta)$, $K\geq1$, $\theta\in[0,\pi)$, be the class of all closed densely defined linear operators $A$ in $X_{0}$ such that 
$$
S_{\theta}=\{\lambda\in\mathbb{C}\,|\, |\arg(\lambda)|\leq\theta\}\cup\{0\}\subset\rho{(-A)}
$$
and
$$
(1+|\lambda|)\|(A+\lambda)^{-1}\|_{\mathcal{L}(X_{0})}\leq K, \quad \lambda\in S_{\theta}.
$$
The elements in $\mathcal{P}(\theta)=\cup_{K\geq1}\mathcal{P}(K,\theta)$ are called {\em invertible sectorial operators of angle $\theta$}. If $A\in \mathcal{P}(K,\theta)$ for some $K\geq1$ and $\theta\in[0,\pi)$, then $K$ is called {\em a sectorial bound} of $A$.
\end{definition}

\begin{remark}\label{secextnd}
If $A\in\mathcal{P}(\theta)$ for some $\theta\in[0,\pi)$, then there exist $r>0$ and $\phi\in(\theta,\pi)$ such that
$$
\Omega_{r,\phi}=\{\lambda\in \mathbb{C}\, |\, |\lambda|\leq r\}\cup S_{\phi} \subset \rho(-A)
$$
and
$$
(1+|\lambda|)\|(A+\lambda)^{-1}\|_{\mathcal{L}(X_{0})}\leq C, \quad \lambda\in \Omega_{r,\phi},
$$
for certain $C\geq1$, see e.g. \cite[(III.4.7.12)-(III.4.7.13)]{Am}.
\end{remark}

We start by the holomorphic functional calculus for sectorial operators which is defined by the Dunford integral formula, see e.g. \cite[Theorem 1.7]{DHP}. If $A\in \mathcal{P}(\theta)$, due to Remark \ref{secextnd}, we can always assume that $\theta>0$. Moreover, for any $\rho\geq0$ and $\theta\in(0,\pi)$, consider the counterclockwise oriented path
$$
\Gamma_{\rho,\theta}=\{re^{-i\theta}\in\mathbb{C}\,|\,r\geq\rho\}\cup\{\rho e^{i\phi}\in\mathbb{C}\,|\,\theta\leq\phi\leq2\pi-\theta\}\cup\{re^{+i\theta}\in\mathbb{C}\,|\,r\geq\rho\}.
$$
Simply denote $\Gamma_{0,\theta}$ by $\Gamma_{\theta}$, let $-\Gamma_{\rho,\theta}=\{\lambda\in\mathbb{C}\, |\, -\lambda\in \Gamma_{\rho,\theta}\}$ and, for each $c\in\mathbb{R}$, let $c+\Gamma_{\rho,\theta}=\{c+\lambda\in \mathbb{C}\, |\, \lambda\in \Gamma_{\rho,\theta}\}$. Note that $-\Gamma_{\theta}=\Gamma_{\pi-\theta}$. A typical example of the functional calculus are the complex powers; for $\mathrm{Re}(z)<0$ they are defined by
\begin{equation}\label{cp}
A^{z}=\frac{1}{2\pi i}\int_{\Gamma_{\rho,\theta}}(-\lambda)^{z}(A+\lambda)^{-1}d\lambda,
\end{equation}
where $\rho>0$ is sufficiently small. The family $\{A^{z}\}_{\mathrm{Re}(z)<0}\cup\{A^{0}=I\}$ is a strongly continuous analytic semigroup on $X_{0}$, see e.g. \cite[Theorem III.4.6.2 and Theorem III.4.6.5]{Am}. Furthermore, each $A^{z}$, $\mathrm{Re}(z)<0$, is injective and the complex powers for positive real part $A^{-z}$ are defined by $A^{-z}=(A^{z})^{-1}$, see e.g. \cite[(III.4.6.12)]{Am}. For further properties of the complex powers of sectorial operators we refer to \cite[Theorem III.4.6.5]{Am}. Also, recall the following decay property of the resolvent of a sectorial operator. 

\begin{remark}\label{TaLnLem}
If $A\in\mathcal{P}(\theta)$, $\theta\in[0,\pi)$, in $X_{0}$, then for any $\rho\in[0,1]$ there exists a $C>0$, depending only on $\theta$, the sectorial bound of $A$ and $\rho$, such that 
$$
\|A^{\rho}(A+\lambda)^{-1}\|_{\mathcal{L}(X_{0})}\leq\frac{C}{1+|\lambda|^{1-\rho}}, \quad \lambda\in S_{\theta}.
$$
This can be found e.g. in \cite[Lemma 7.1]{RShao}.
\end{remark}

We next consider a certain property of operators in the class $\mathcal{P}(\theta)$. If $\phi\in[0,\pi)$, denote by $H_{0}^{\infty}(\phi)$ the space of all bounded analytic functions $f:\mathbb{C}\backslash S_{\phi}\rightarrow \mathbb{C}$ satisfying 
$$
|f(\lambda)|\leq c \Big(\frac{|\lambda|}{1+|\lambda|^{2}}\Big)^{\eta} \quad \text{for any} \quad \lambda\in \mathbb{C}\backslash S_{\phi},
$$
and some $c,\eta>0$ depending on $f$.

\begin{definition}[Bounded $H^{\infty}$-calculus]\label{hinftycal}
If $\theta\in(0,\pi)$, $\phi\in[0,\theta)$ and $A\in\mathcal{P}(\theta)$, then any $f\in H_{0}^{\infty}(\phi)$ defines an element $f(-A)\in \mathcal{L}(X_{0})$ by 
$$
f(-A)=\frac{1}{2\pi i}\int_{\Gamma_{\theta}}f(\lambda)(A+\lambda)^{-1} d\lambda.
$$
We say that the operator $A$ {\em has bounded $H^{\infty}$-calculus of angle $\phi$}, and we denote by $A\in \mathcal{H}^{\infty}(\phi)$, if there exists some $C>0$ such that
$$
\|f(-A)\|_{\mathcal{L}(X_{0})}\leq C\sup_{\lambda\in\mathbb{C}\backslash S_{\phi}}|f(\lambda)| \quad \mbox{for any} \quad f\in H_{0}^{\infty}(\phi).
$$
\end{definition}

Consider now the following abstract parabolic first order linear Cauchy problem
\begin{eqnarray}\label{app1}
u'(t)+Au(t)&=&f(t), \quad t\in(0,T),\\\label{app2}
u(0)&=&0,
\end{eqnarray}
where $-A:X_{1}\rightarrow X_{0}$ is the infinitesimal generator of an analytic semigroup on $X_{0}$ and $f\in L^{d}(0,T;X_{0})$ for some $d\in(1,\infty)$, $T>0$. We say that the operator $A$ has {\em maximal $L^{d}$-regularity} if for any $f\in L^{d}(0,T;X_{0})$ there exists a unique $u\in H_{d}^{1}(0,T;X_{0})\cap L^{d}(0,T;X_{1})$ solving \eqref{app1}-\eqref{app2}; if this is the case, then $u$ depends continuously on $f$ and the above property is independent of $q$ and $T$. 

\begin{definition}[$R$-boundedness]
Let $\{\epsilon_{k}\}_{k\in\mathbb{N}}$ be the sequence of Rademacher functions. A set $E\subset \mathcal{L}(X_{0})$ is called {\em $R$-bounded} if there exists a $C>0$ such that
$$
\|\sum_{k=1}^{N}\epsilon_{k}T_{k}x_{k}\|_{L^{2}(0,1;X_0)} \leq C \|\sum_{k=1}^{N}\epsilon_{k}x_{k}\|_{L^{2}(0,1;X_0)},
$$
for every $N\in\mathbb{N}$, $T_{1},\dots,T_{N}\in E$ and $x_{1},\dots,x_{N}\in X_0$. The constant $C$ is called {\em an $R$-bound} of $E$.
\end{definition} 

Based on the above, we introduce a boundedness property of the resolvent of a sectorial operator that is related to the maximal $L^{d}$-regularity. 

\begin{definition}[$R$-sectoriality]\label{r1sec}
Let $\mathcal{R}(\theta)$, $\theta\in[0,\pi)$, be the class of all operators $A\in \mathcal{P}(\theta)$ in $X_{0}$ such that the set $\{\lambda(A+\lambda)^{-1}\, |\, \lambda\in S_{\theta}\backslash\{0\}\}$ is $R$-bounded. If $A\in \mathcal{R}(\theta)$ then $A$ is called {\em (invertible) $R$-sectorial of angle $\theta$}; in this case, an $R$-bound $C\geq1$ of the above family is called {\em $R$-sectorial bound} of $A$ and we write $A\in \mathcal{R}(C, \theta)$. 
\end{definition} 

All Banach spaces we will consider in the next sections belong to the class of UMD. In this class the following fundamental result holds true.

\begin{theorem}[{\rm Kalton and Weis, \cite[Theorem 6.5]{KaW} or \cite[Theorem 4.2]{Weis}}]\label{KaWeTh}
If $X_{0}$ is UMD and $A\in\mathcal{R}(\theta)$ in $X_{0}$ with $\theta>\pi/2$, then $A$ has maximal $L^{d}$-regularity. 
\end{theorem}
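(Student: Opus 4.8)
The plan is to build the solution of \eqref{app1}--\eqref{app2} by a Duhamel formula on the whole line and to realize the map $f\mapsto Au$ as an operator-valued Fourier multiplier on $L^{q}(\mathbb{R};X_{0})$; the boundedness of that multiplier is the heart of the matter, and it is exactly there that both the UMD property of $X_{0}$ and the angle condition $\theta>\pi/2$ enter, through the operator-valued Mikhlin multiplier theorem.

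First I would record the elementary consequences of the hypotheses. Since $A\in\mathcal{R}(\theta)\subset\mathcal{P}(\theta)$ with $\theta>\pi/2$, we have $0\in\rho(-A)$; by Remark~\ref{secextnd} the set $\Omega_{r,\phi}$ with $\phi\in(\theta,\pi)$ lies in $\rho(-A)$, so $\sigma(-A)$ is contained in a sector around the negative real axis, bounded away from the imaginary axis. Hence $-A$ generates an exponentially stable bounded analytic semigroup $\{e^{-tA}\}_{t\geq0}$ on $X_{0}$. Given $f\in L^{q}(0,T;X_{0})$, extend it by zero to $f\in L^{q}(\mathbb{R};X_{0})$ and set $u(t)=\int_{-\infty}^{t}e^{-(t-s)A}f(s)\,ds$; this is well defined by the exponential decay, it vanishes for $t\leq0$ (so $u(0)=0$), and it solves $u'+Au=f$ on $\mathbb{R}$. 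It therefore suffices to prove the single estimate $\|Au\|_{L^{q}(\mathbb{R};X_{0})}\leq C\|f\|_{L^{q}(\mathbb{R};X_{0})}$: together with $u'=f-Au$ and the invertibility of $A$ (which makes the graph norm of $\mathcal{D}(A)$ equivalent to $\|A\cdot\|_{X_{0}}$), this gives $u\in H^{1}_{q}(0,T;X_{0})\cap L^{q}(0,T;X_{1})\hookrightarrow C([0,T];X_{0})$ and the bound $\|u\|_{H^{1}_{q}(0,T;X_{0})}+\|u\|_{L^{q}(0,T;X_{1})}\leq C\|f\|_{L^{q}(0,T;X_{0})}$, and the restriction of $u$ to $(0,T)$ solves \eqref{app1}--\eqref{app2}.

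Next, Fourier transform in $t$ turns $f\mapsto Au$ into multiplication by the operator-valued symbol $m(\xi)=A(i\xi+A)^{-1}$, $\xi\in\mathbb{R}\setminus\{0\}$, which is well defined precisely because $i\xi\in S_{\theta}\setminus\{0\}$ (this is where $\theta>\pi/2$ is used). Writing $m(\xi)=I-i\xi(i\xi+A)^{-1}$, Definition~\ref{r1sec} gives that $\{i\xi(i\xi+A)^{-1}\,|\,\xi\neq0\}$ is $R$-bounded, hence so is $\{m(\xi)\,|\,\xi\neq0\}$. Differentiating, $\xi m'(\xi)=-\big(I-i\xi(i\xi+A)^{-1}\big)\,i\xi(i\xi+A)^{-1}$, a product of two $R$-bounded families; since products, sums and scalar multiples of $R$-bounded sets are $R$-bounded, $\{\xi m'(\xi)\,|\,\xi\neq0\}$ is $R$-bounded as well. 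Thus $m$ satisfies the $R$-boundedness form of the Mikhlin condition, and since $X_{0}$ is UMD, the operator-valued Mikhlin multiplier theorem (\cite{Weis}) shows that $m$ is a bounded Fourier multiplier on $L^{q}(\mathbb{R};X_{0})$ for every $q\in(1,\infty)$, with norm controlled by the above $R$-bounds; this is exactly the estimate needed in the previous step. Because this argument works for all $q\in(1,\infty)$ simultaneously and the whole-line construction does not see $T$, the independence of $q$ and $T$ is automatic. For uniqueness, a solution of the homogeneous problem is a mild solution: differentiating $s\mapsto e^{-(t-s)A}u(s)$ on $[0,t]$ shows it is constant, whence $u(t)=e^{-tA}u(0)=0$.

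The main obstacle is the operator-valued Mikhlin multiplier theorem itself. Its proof genuinely requires the UMD property (for $q\neq2$ a randomization/Littlewood--Paley argument is unavoidable, and one truly needs $R$-boundedness, not mere uniform boundedness, of $\{m(\xi)\}$ and $\{\xi m'(\xi)\}$), and establishing it is the content of \cite{Weis}. The remaining steps -- exponential stability of the semigroup from invertible sectoriality, the reduction to the line, the verification of the zero initial condition from the support of the extended datum, and the embedding $H^{1}_{q}(0,T;X_{0})\hookrightarrow C([0,T];X_{0})$ -- are routine.
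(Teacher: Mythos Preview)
The paper does not give a proof of this statement: Theorem~\ref{KaWeTh} is stated as a known result and attributed to \cite[Theorem 6.5]{KaW} and \cite[Theorem 4.2]{Weis}, with no argument supplied. Your proposal is a correct outline of the standard proof---reduction to the real line via the Duhamel formula, identification of $f\mapsto Au$ with the operator-valued Fourier multiplier $m(\xi)=A(i\xi+A)^{-1}$, verification of the $R$-boundedness of $\{m(\xi)\}$ and $\{\xi m'(\xi)\}$ from $A\in\mathcal{R}(\theta)$ with $\theta>\pi/2$, and an appeal to the operator-valued Mikhlin theorem in UMD spaces---and this is essentially the argument in the cited reference \cite{Weis}. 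There is nothing to compare on the paper's side beyond the citation.
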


We recall next a subclass of the $H^{\infty}$-calculus operators, quite efficient in perturbation theory.

\begin{definition}[$R$-bounded $H^{\infty}$-calculus]\label{Rbhinftycal}
Let $\mathcal{RH}(\theta)$, $\theta\in[0,\pi)$, be the class of all operators $A\in \mathcal{H}(\theta)$ in $X_{0}$ such that the set $\{f(A)\, |\, f\in H_{0}^{\infty}(\theta), \sup_{\lambda\in\mathbb{C}\backslash S_{\theta}}|f(\lambda)|\leq1\}$ is $R$-bounded. Any $A\in \mathcal{RH}(\theta)$ is said to have {\em$R$-bounded $H^{\infty}$-calculus of angle $\theta$.} 
\end{definition} 

Concerning the subclass of $\mathcal{P}(\theta)$ we have introduced, in UMD spaces the following inclusion holds
\begin{equation}\label{sectinclutions}
\mathcal{H}(\theta) \subseteq \mathcal{R}(\phi),
\end{equation}
where $0<\phi<\theta<\pi$, see e.g. \cite[Theorem 4.5]{DHP}. Next we show a decay property of the resolvent of the sum of two sectorial operators.

\begin{lemma}\label{Lem1}
Let $A$, $B$ be two resolvent commuting linear operators {\em(}see e.g. \cite[(III.4.9.1)]{Am}{\em)} such that $A\in\mathcal{H}^{\infty}(\theta_{A})$ and $B\in\mathcal{R}(\theta_{B})$, for some $\theta_{A}, \theta_{B}\in (0,\pi)$ satisfying the parabolicity condition $\theta_{A}+\theta_{B}>\pi$. Then: \\
{\em {\bf (i)}} $A+B:\mathcal{D}(A)\cap\mathcal{D}(B) \rightarrow X_{0}$ is closed and belongs to $\mathcal{P}(\theta_{A+B})$, where $\theta_{A+B}=\min\{\theta_{A},\theta_{B}\}$.\\
{\em {\bf (ii)}} For any $\rho\in(0,1)$ we have
$$
\|(A+B)(A+B+\lambda)^{-1}A^{-\rho}\|_{\mathcal{L}(X_{0})}\leq \frac{K}{1+|\lambda|^{\rho}},\quad \lambda\in S_{\theta_{A+B}}, 
$$
for some constant $K>0$ depending only on $\rho$ and on the sectorial bounds of $A$ and $B$.
\end{lemma}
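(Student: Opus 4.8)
The plan is to invoke the classical Dore--Venni / Kalton--Weis sum theorem to get part (i), and then exploit the bounded $H^{\infty}$-calculus of $A$ together with the $R$-sectoriality of $B$ to squeeze out the extra decay $|\lambda|^{-\rho}$ in part (ii). For (i), since $A$ and $B$ are resolvent commuting, $A\in\mathcal{H}^{\infty}(\theta_{A})\subseteq\mathcal{BIP}(\pi-\phi)$ for any $\phi<\theta_{A}$ by \eqref{sectinclutions}, and $B\in\mathcal{R}(\theta_{B})$; the parabolicity condition $\theta_{A}+\theta_{B}>\pi$ is exactly what is needed for the Kalton--Weis operator-valued functional calculus theorem to apply. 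This yields that $A+B$ with domain $\mathcal{D}(A)\cap\mathcal{D}(B)$ is closed, invertible, and $R$-sectorial (hence sectorial) of angle $\theta_{A+B}=\min\{\theta_{A},\theta_{B}\}$, so $A+B\in\mathcal{P}(\theta_{A+B})$. I would simply cite \cite[Theorem 6.5]{KaW} or the corresponding statement in \cite{DHP} for this.

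For part (ii), fix $\lambda\in S_{\theta_{A+B}}$ and $\rho\in(0,1)$. The idea is to write, using that $A,B,$ and $(A+B+\lambda)^{-1}$ all commute,
$$
(A+B)(A+B+\lambda)^{-1}A^{-\rho}=A^{1-\rho}(A+B+\lambda)^{-1}+B(A+B+\lambda)^{-1}A^{-\rho},
$$
and estimate the two terms separately. For the first term I would represent $A^{1-\rho}(A+B+\lambda)^{-1}$ via the Dunford integral of $A$ against its resolvent and use the $H^{\infty}$-calculus bound for $A$ together with the sectorial estimate $\|(A+B+\lambda)^{-1}\|\leq C/(1+|\mu+\lambda|)$ along the contour; the gain comes because $|\mu|^{1-\rho}/|\mu+\lambda|$ is integrable and of order $|\lambda|^{-\rho}$ (this is a standard moment-type estimate, cf.\ the proof that fractional powers improve resolvent decay). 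For the second term, write $B(A+B+\lambda)^{-1}=I-(A+\lambda)(A+B+\lambda)^{-1}$ and then compose with $A^{-\rho}$: the identity piece gives $A^{-\rho}$, bounded, but we need decay, so instead I would use $B(A+B+\lambda)^{-1}A^{-\rho}=B(B+\lambda)^{-1}\cdot(B+\lambda)(A+B+\lambda)^{-1}A^{-\rho}$ and then represent $(B+\lambda)(A+B+\lambda)^{-1}A^{-\rho}$ again by a Dunford integral in $A$, using $R$-sectoriality of $B$ to control $\|(B+\mu+\lambda)^{-1}(B+\lambda)\|$ uniformly (actually $\|B(B+\lambda)^{-1}\|\leq K$ by $R$-sectoriality) and the factor $\|A^{-\rho}\cdot A(A+\mu)^{-1}\|\lesssim |\mu|^{-\rho}$ from the $H^{\infty}$-calculus of $A$, so that the $\mu$-integral over $\Gamma_{\theta}$ again converges and produces the factor $(1+|\lambda|^{\rho})^{-1}$.

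The main obstacle I anticipate is bookkeeping the contour integrals so that the decay in $\lambda$ is genuinely $|\lambda|^{-\rho}$ and not merely $|\lambda|^{-1}$ or $O(1)$: one must split the integration path $\Gamma_{\theta}$ into the region $|\mu|\lesssim|\lambda|$ and $|\mu|\gtrsim|\lambda|$, use in the first region the bound coming from $\|(A+B+\lambda)^{-1}\|\lesssim|\lambda|^{-1}$ (picking up $\int_{0}^{|\lambda|}|\mu|^{-\rho}\,d|\mu|/|\lambda|\sim|\lambda|^{-\rho}$ after the substitution) and in the second region the bound $\|(A+B+\lambda)^{-1}\|\lesssim|\mu|^{-1}$ (picking up $\int_{|\lambda|}^{\infty}|\mu|^{-1-\rho}\,d|\mu|\sim|\lambda|^{-\rho}$). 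Keeping track that all constants depend only on $\rho$ and on the sectorial bounds of $A$ and $B$ — in particular not on $\lambda$ — requires care but is routine once the splitting is set up. An alternative, slicker route is to apply the operator-valued $H^{\infty}$-calculus directly to the holomorphic family $z\mapsto z^{1-\rho}(z+B+\lambda)^{-1}$ evaluated at $A$, noting this family lies in $H_{0}^{\infty}$ of the appropriate sector with norm $\lesssim(1+|\lambda|^{\rho})^{-1}$ by the decay of $(z+B+\lambda)^{-1}$, which then gives the claim in one stroke via the $\mathcal{H}^{\infty}(\theta_{A})$-bound of $A$; I would present this second argument as the cleanest one and relegate the contour estimates to a remark.
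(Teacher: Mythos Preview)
Your treatment of part (i) is fine and matches the paper's: both simply invoke the standard sum theorems (the paper cites \cite[Theorem 3.1]{PS}, or alternatively Da~Prato--Grisvard together with \cite[Theorem 6.3]{KaW}).

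For part (ii), however, your plan has a genuine gap in the handling of the $B$-term. After your splitting
$$
(A+B)(A+B+\lambda)^{-1}A^{-\rho}=A^{1-\rho}(A+B+\lambda)^{-1}+B(A+B+\lambda)^{-1}A^{-\rho},
$$
the first summand is indeed controllable by your ``slicker route'': the family $z\mapsto z^{1-\rho}(z+B+\lambda)^{-1}$ lies in the operator-valued $H_{0}^{\infty}$ class with $R$-bound $\lesssim(1+|\lambda|^{\rho})^{-1}$, and Kalton--Weis applies. But the second summand does \emph{not} yield to the argument you sketch. Your factorization gives
$$
B(B+\lambda)^{-1}\cdot(B+\lambda)(A+B+\lambda)^{-1}A^{-\rho}=B(B+\lambda)^{-1}\big(A^{-\rho}-A^{1-\rho}(A+B+\lambda)^{-1}\big),
$$
and while the second piece decays, the first piece $B(B+\lambda)^{-1}A^{-\rho}$ does not: $\|B(B+\lambda)^{-1}\|$ is merely bounded, not decaying, in operator norm. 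Your appeal to $\|A^{-\rho}A(A+\mu)^{-1}\|\lesssim|\mu|^{-\rho}$ is misplaced here --- that estimate involves $A(A+\mu)^{-1}$, which arises when you hit the Da~Prato--Grisvard integral with $A$ (i.e.\ in the $A$-part), not with $B$. If instead you insert the Da~Prato--Grisvard representation for $(A+B+\lambda)^{-1}$ into the $B$-term, the integrand carries only $(A+\mu)^{-1}A^{-\rho}$, whose norm decays like $(1+|\mu|)^{-1}$; combined with the merely bounded factor $\|(B+\lambda)(B+\lambda-\mu)^{-1}\|$, the $\mu$-integral diverges logarithmically. Likewise, the operator-valued calculus route fails for the $B$-part because $z\mapsto B(z+B+\lambda)^{-1}z^{-\rho}$ blows up both as $z\to 0$ and (when $B$ is unbounded) as $|z|\to\infty$.

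The paper's proof avoids this trap by \emph{not} splitting first. It writes both $(A+B+\lambda)^{-1}$ and $A^{-\rho}$ as Dunford integrals, applies the resolvent identity $(A+z)^{-1}(A+\mu)^{-1}=(\mu-z)^{-1}[(A+z)^{-1}-(A+\mu)^{-1}]$, and uses Fubini plus Cauchy's theorem to collapse the double integral into the single representation
$$
(A+B+\lambda)^{-1}A^{-\rho}=\frac{1}{2\pi i}\int_{\Gamma_{\theta_{A}}}(-\mu)^{-\rho}(A+\mu)^{-1}(B+\lambda-\mu)^{-1}\,d\mu.
$$
The crucial gain is the explicit factor $(-\mu)^{-\rho}$ already sitting inside the integral. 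Only \emph{after} this does the paper apply $(A+B)$ and split into $A$- and $B$-contributions under the integral sign; both pieces are then absolutely convergent, and the rescaling $\mu=|\lambda|z$ followed by a contour deformation extracts the $|\lambda|^{-\rho}$ decay uniformly. This Fubini/Cauchy reduction to a single integral with the built-in weight $(-\mu)^{-\rho}$ is precisely the missing idea in your proposal; without it, the $B$-term cannot be closed.
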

\begin{proof}
{\bf (i)} The closedness and the sectoriality of $A+B$ follow by \cite[Theorem 3.1]{PS}; alternatively they follow by the classical results \cite[Lemma 3.5 and Theorem 3.7]{DG} and \cite[Theorem 6.3]{KaW}. \\
{\bf (ii)} In particular, see e.g. \cite[(3.11)]{DG} or \cite[Theorem 2.1]{Roi}, the resolvent of $A+B$ is given by the formula 
$$
(A+B+\lambda)^{-1}=\frac{1}{2\pi i}\int_{\Gamma_{\theta_{A}}}(A+z)^{-1}(B+\lambda-z)^{-1}dz, \quad \lambda\in S_{\theta_{A+B}}.
$$
By the invertibility of $A+B$, we can assume that $|\lambda|\geq1$. Then, by Remark \ref{secextnd} and \eqref{cp}, for sufficiently small $\varepsilon, \delta>0$ we have
\begin{eqnarray*}
\lefteqn{(A+B+\lambda)^{-1}A^{-\rho}}\\
&=&\frac{1}{2\pi i}\int_{\Gamma_{\theta_{A}-\varepsilon}}(B+\lambda-z)^{-1}\Big(\frac{1}{2\pi i}\int_{-\delta+\Gamma_{\theta_{A}}}(-\mu)^{-\rho}(A+z)^{-1}(A+\mu)^{-1}d\mu\Big)dz\\
&=&\frac{1}{2\pi i}\int_{\Gamma_{\theta_{A}-\varepsilon}}(B+\lambda-z)^{-1}\\
&&\times\Big(\frac{1}{2\pi i}\int_{-\delta+\Gamma_{\theta_{A}}}(-\mu)^{-\rho}(\mu-z)^{-1}((A+z)^{-1}-(A+\mu)^{-1})d\mu\Big)dz\\
&=&\frac{1}{(2\pi i)^{2}}\int_{\Gamma_{\theta_{A}-\varepsilon}}\int_{-\delta+\Gamma_{\theta_{A}}}(-\mu)^{-\rho}(\mu-z)^{-1}(B+\lambda-z)^{-1}(A+z)^{-1}d\mu dz\\
&&-\frac{1}{(2\pi i)^{2}}\int_{-\delta+\Gamma_{\theta_{A}}}\int_{\Gamma_{\theta_{A}-\varepsilon}}(-\mu)^{-\rho}(\mu-z)^{-1}(B+\lambda-z)^{-1}(A+\mu)^{-1}dz d\mu,
\end{eqnarray*}
where we have used Fubini's theorem in the last term. By Cauchy's theorem, the first term on the right-hand side of the above equation is zero. Therefore, by first applying Cauchy's theorem to the second term, changing $\mu\mapsto \mu-\delta$ and then using dominated convergence theorem as $\delta\rightarrow 0$, we get 
\begin{equation*}
(A+B+\lambda)^{-1}A^{-\rho}=\frac{1}{2\pi i}\int_{\Gamma_{\theta_{A}}}(-\mu)^{-\rho}(A+\mu)^{-1}(B+\lambda-\mu)^{-1}d\mu,
\end{equation*}
which implies
\begin{eqnarray*}\label{ef}
\lefteqn{(A+B)(A+B+\lambda)^{-1}A^{-\rho}}\\
&=&\frac{1}{2\pi i}\int_{\Gamma_{\theta_{A}}}(-\mu)^{-\rho}(A(A+\mu)^{-1}(B+\lambda-\mu)^{-1}+B(B+\lambda-\mu)^{-1}(A+\mu)^{-1})d\mu,
\end{eqnarray*}
since the last integral converges absolutely. By changing variables $\mu=|\lambda|z$, we obtain
\begin{eqnarray*}\label{ef}
\lefteqn{(A+B)(A+B+\lambda)^{-1}A^{-\rho}}\\
&=&\frac{|\lambda|^{1-\rho}}{2\pi i}\int_{\Gamma_{\theta_{A}}}(-z)^{-\rho}(A(A+|\lambda|z)^{-1}(B+\lambda-|\lambda|z)^{-1}\\
&&+B(B+\lambda-|\lambda|z)^{-1}(A+|\lambda|z)^{-1})dz,
\end{eqnarray*}
and by Cauchy's theorem we deduce
\begin{eqnarray*}\label{ef}
\lefteqn{(A+B)(A+B+\lambda)^{-1}A^{-\rho}}\\
&=&\frac{|\lambda|^{1-\rho}}{2\pi i}\int_{-\Gamma_{r,\pi-\theta_{A}-\varepsilon}}(-z)^{-\rho}(A(A+|\lambda|z)^{-1}(B+\lambda-|\lambda|z)^{-1}\\
&&+B(B+\lambda-|\lambda|z)^{-1}(A+|\lambda|z)^{-1})dz,
\end{eqnarray*}
for some fixed sufficiently small $r>0$ independent of $\lambda$. For such $r$, due to $\theta_{A}+\theta_{B}>\pi$, $\lambda-|\lambda|z$ always belongs to a sector slightly larger than $S_{\theta_{B}}$, where, due to Remark \ref{secextnd}, $B$ is still sectorial. Therefore, by the sectoriality of $A$ in $S_{\theta_{A}+\varepsilon}$ and the sectoriality of $B$ in a sector slightly larger than $S_{\theta_{B}}$, we estimate
\begin{eqnarray*}\label{ef}
\lefteqn{\|(A+B)(A+B+\lambda)^{-1}A^{-\rho}\|_{\mathcal{L}(X_{0})}}\\
&\leq&\frac{|\lambda|^{-\rho}}{2\pi}\int_{-\Gamma_{r,\pi-\theta_{A}-\varepsilon}}|z|^{-\rho}\Big(\frac{K_{1}}{|\lambda|^{-1}+|\frac{\lambda}{|\lambda|}-z|}+\frac{K_{2}}{|\lambda|^{-1}+|z|}\Big)dz,
\end{eqnarray*}
for some $K_{1}, K_{2}>0$ that only depend on the sectorial bounds of $A$ and $B$. The result follows since the above integral is uniformly bounded in $\lambda\in S_{\theta_{A+B}}$ with $|\lambda|\geq1$.
 \end{proof}
 
If $\eta\in(0,1)$ and $p\in(1,\infty)$, denote by $(\cdot,\cdot)_{\eta,p}$ the real interpolation functor of exponent $\eta$ and parameter $p$.

{\bf Proof of Theorem \ref{t1}.} Let $E_{0}=L^{q}(\mathbb{K};X_{0})$ and $E_{1}=L^{q}(\mathbb{K};X_{1})$.
For any fixed $z\in\mathbb{K}$ denote again by $B(z)$ the natural extension of $B(z)\in \mathcal{L}(X_{1},X_{0})$ in $E_{0}$ with domain $E_{1}$. By the definition of $\mathcal{H}^{\infty}$-calculus combined with Kahane's inequality \cite[Theorem 2.4]{KW1}, each $B(z)+c$ belongs again to $\mathcal{RH}^{\infty}(\theta_{B})$ or to $\mathcal{H}^{\infty}(\theta_{B})$, depending on whether {\bf (i)} or {\bf (ii)} is satisfied. We split the proof in several steps.\\
{\em Step 1 {\em(}Freezing the coefficients{\em)}}. Fix $z\in\mathbb{K}$ and consider the operator $A+B(z)$ with domain $\mathcal{D}(A)\cap E_{1}$ in $E_{0}$. Since $A$ and $B(z)$ are resolvent commuting, see e.g. \cite[(III.4.9.1)]{Am} for the definition of resolvent commuting operators, and the parabolicity condition $\theta_{A}+\theta_{B}>\pi$ holds, by \cite[Theorem 3.1]{PS} the sum $A+B(z)+c$ is closed, invertible sectorial and moreover satisfies $A+B(z)+c\in\mathcal{H}^{\infty}(\theta_{A+B})$. In particular, $A+B(z)+c\in\mathcal{R}(\theta_{A+B})$ due to \eqref{sectinclutions}.\\
{\em Step 2 {\em (}Uniform boundedness of the spectral shifts{\em)}}. For any $z\in\mathbb{K}$ denote
$$
\underline{c}_{z}=\inf\{c\geq0 \, |\, \text{$A+B(z)+c\in\mathcal{R}(\theta_{A+B})$}\}.
$$
Assume that there exists some sequence $\{w_{j}\}_{j\in\mathbb{N}}$ in $\mathbb{K}$ such that $\underline{c}_{w_{j}}\rightarrow\infty$ as $j\rightarrow\infty$. By the assumption, $\{w_{j}\}_{j\in\mathbb{N}}$ lies in a compact set, so that, by possibly passing to a subsequence, we can assume that $w_{j}\rightarrow w\in\mathbb{K}$ as $j\rightarrow\infty$. Let $\tilde{c}\geq0$ such that $A+B(w)+\tilde{c}\in\mathcal{R}(\theta_{A+B})$ and choose $\delta>0$ such that $d(z,w)<\delta$ implies
$$
\|(B(z)-B(w))(A+B(w)+\tilde{c})^{-1}\|_{\mathcal{L}(E_{0})}<\frac{1}{2}\min\Big\{\frac{1}{1+K_{w}},\frac{1}{1+R_{w}}\Big\},
$$
where $d(\cdot,\cdot)$ is the geodesic distance on $\mathbb{K}$ and $K_{w}$, $R_{w}$ is a sectorial and an $R$-sectorial bound of $A+B(w)+\tilde{c}$, respectively. By the formula 
\begin{eqnarray*}
\lefteqn{(A+B(z)+\tilde{c}+\lambda)^{-1}}\\
&=&(A+B(w)+\tilde{c}+\lambda)^{-1}\sum_{k=0}^{\infty}\Big((B(w)-B(z))(A+B(w)+\tilde{c}+\lambda)^{-1}\Big)^{k},
\end{eqnarray*}
valid for $z$ as above and for $\lambda\in S_{\theta_{A+B}}$, we deduce that $S_{\theta_{A+B}}\subset \rho(-(A+B(z)+\tilde{c}))$ and $A+B(z)+\tilde{c}\in\mathcal{P}(2K_{w},\theta_{A+B})\cup \mathcal{R}(2R_{w},\theta_{A+B})$. Hence, we get a contradiction. Moreover, see e.g. \cite[Lemma 2.6]{RS2}, if for an operator $T$ in a Banach space $E$ we have $T\in \mathcal{R}(\beta,\theta)$, for some $\beta\geq1$ and $\theta\in[0,\pi)$, then $T+\tau\in \mathcal{R}(\widetilde{\beta},\theta)$ for any $\tau\geq0$, where
\begin{equation}\label{Stheta}
\widetilde{\beta}=\beta\Big(1+\frac{2}{S(\theta)}\Big)\quad \text{and} \quad S(\theta)=\Big\{\begin{array}{lll} \sin(\theta) & \text{if} &\theta\in[\pi/2,\pi)\\ 1 & \text{if} & \theta\in [0,\pi/2).\end{array}
\end{equation}
By the above we conclude that there exists some $\underline{c}\geq0$ with the following property: for any $c_{1}\geq \underline{c}$ we have that $A+B(z)+c_{1}\in\mathcal{R}(\theta_{A+B})$ for all $z\in\mathbb{K}$.\\
{\em Step 3 {\em(}Uniform boundedness of the $R$-sectorial bounds{\em)}}. For any $z\in\mathbb{K}$ denote by $\underline{R}_{z}\geq1$ the infimum of all possible $R$-sectorial bounds of $A+B(z)+\underline{c}$ with respect to the angle $\theta_{A+B}$. Assume that there exists some sequence $\{\xi_{j}\}_{j\in\mathbb{N}}$ in $\mathbb{K}$ such that $\underline{R}_{\xi_{j}}\rightarrow\infty$ as $j\rightarrow\infty$. By the assumption, $\{\xi_{j}\}_{j\in\mathbb{N}}$ lies in a compact set, so that, by possibly passing to a subsequence, we may assume that $\xi_{j}\rightarrow \xi\in\mathbb{K}$ as $j\rightarrow\infty$. Let $\varepsilon>0$ sufficiently small such that $ d(z,\xi)<\varepsilon$ implies
$$
\|B(z)-B(\xi)\|_{\mathcal{L}(E_{1},E_{0})}<\frac{1}{2(1+\underline{R}_{\xi})\|(A+B(\xi)+\underline{c})^{-1}\|_{\mathcal{L}(E_{0},E_{1})}}.
$$
Then, by the formula
\begin{eqnarray*}
\lefteqn{(A+B(z)+\underline{c}+\lambda)^{-1}}\\
&=&(A+B(\xi)+\underline{c}+\lambda)^{-1}\sum_{k=0}^{\infty}\Big((B(\xi)-B(z))(A+B(\xi)+\underline{c}+\lambda)^{-1}\Big)^{k},
\end{eqnarray*}
valid for $z$ as above and for $\lambda\in S_{\theta_{A+B}}$, or equivalently, by the formula $A+B(z)+\underline{c}=A+B(\xi)+\underline{c}+B(z)-B(\xi)$ and the $R$-sectoriality perturbation result \cite[Theorem 1]{KW}, we find that $A+B(z)+\underline{c}\in \mathcal{R}(2\underline{R}_{\xi},\theta_{A+B})$ when $d(z,\xi)<\varepsilon$. This provides us a contradiction. By taking into account \eqref{Stheta}, we conclude that there exists some $K_{0}\geq1$ such that $A+B(z)+c_{1}\in\mathcal{R}(K_{0},\theta_{A+B})$ for all $c_{1}\geq \underline{c}$ and $z\in\mathbb{K}$. \\
{\em Step 4 {\em(}$R$-sectorial local approximations{\em)}}. By a contradiction argument similar to that one in Step 2, i.e. by using the continuity of $B(\cdot)$, the fact that $B(\cdot)$ is constant on $\mathbb{R}^{n}\backslash\mathcal{K}$ when $\mathbb{K}=\mathbb{R}^{n}$ and the elementary perturbation $B(z)=B(\tilde{z})+B(z)-B(\tilde{z})$, $z,\tilde{z}\in\mathbb{K}$, we can show that there exists a $c_{2}\geq\underline{c}$ such that the norm 
\begin{equation}\label{betanorm}
\|(B(z)+c_{2})^{-1}\|_{\mathcal{L}(E_{0},E_{1})} 
\end{equation}
is uniformly bounded in $z\in\mathbb{K}$. Moreover, recall that for fixed $z\in\mathbb{K}$, closedness and invertibility of the operator 
$$
A+B(z)+c_{2} : \mathcal{D}(A)\cap E_{1} \rightarrow E_{0}
$$
also follow by the result of Kalton and Weis \cite[Theorem 6.3]{KaW}. Furthermore, by Step 3, the $R$-sectorial bounds of $B(z)+c_{2}$ can be chosen uniformly bounded in $z\in\mathbb{K}$. Thus, by \cite[Theorem 2.1]{DH}, the norm 
\begin{equation}\label{unibound1}
\|(B(z)+c_{2})(A+B(z)+c_{2})^{-1}\|_{\mathcal{L}(E_{0})}
\end{equation}
is also uniformly bounded in $z\in\mathbb{K}$. \\
For any $z\in\mathbb{K}$ and $r>0$, denote by $U_{r}(z)$ the open geodesic ball in $\mathbb{K}$ of radius $r$, centered at $z$. Consider two open covers $\{U_{j}\}_{j\in\{1,\dots,N\}}$, $\{V_{j}\}_{j\in\{1,\dots,N\}}$, $N\in\mathbb{N}$, $N\geq2$, of $\mathbb{K}$ as follows:\\
When $\mathbb{K}=\mathbb{V}$: $U_{j}=U_{r}(z_{j})$, $V_{j}=U_{2r}(z_{j})$, $j\in\{1,\dots,N\}$, for certain $z_{1},\dots,z_{N}\in\mathbb{V}$ and $r>0$. \\
When $\mathbb{K}=\mathbb{R}^{n}$: $U_{1}=\mathbb{R}^{n}\backslash \overline{U_{2r_{0}}(0)}$, $V_{1}=\mathbb{R}^{n}\backslash \overline{U_{r_{0}}(0)}$ and $U_{j}=U_{r}(z_{j})$, $V_{j}=U_{2r}(z_{j})$, $j\in\{2,\dots,N\}$, for certain $z_{2},\dots,z_{N}\in\mathbb{R}^{n}$ and $r_{0},r>0$. Moreover, choose $r_{0}$ sufficiently large such that $\mathcal{K}\subset U_{r_{0}}(0)$ and take $z_{1}\in \mathbb{R}^{n}\backslash \mathcal{K}$ arbitrary. 

Let $\{\chi_{j}\}_{j\in\{1,\dots,N\}}$ be a collection of smooth positive functions bounded by one such that $\chi_{j}=1$ on $U_{j}$ and $\chi_{j}=0$ outside $V_{j}$, $j\in\{1,\dots,N\}$. For each $j\in\{1,\dots,N\}$ define 
$$
A_{j}=\chi_{j}(A+B+c_{2})+(1-\chi_{j})(A+B(z_{j})+c_{2})=A+B(z_{j})+c_{2}+\chi_{j}(B-B(z_{j}))
$$
with domain $\mathcal{D}(A)\cap E_{1}$ in $E_{0}$. Due to the uniform continuity of $B(\cdot)$, for any $\varepsilon>0$ we can choose $r$ sufficiently small, and possibly increase $N$, such that 
\begin{equation}\label{e1toe0normsmall}
\|\chi_{j}(B-B(z_{j}))\|_{\mathcal{L}(E_{1},E_{0})}\leq \sup_{z\in\mathbb{K}}\|\chi_{j}(z)(B(z)-B(z_{j}))\|_{\mathcal{L}(X_{1},X_{0})}<\varepsilon
\end{equation}
for each $j\in\{1,\dots,N\}$. Then, by choosing $\varepsilon$ sufficiently small and using the formula
\begin{eqnarray}\nonumber
\lefteqn{(A_{j}+\lambda)^{-1}=(A+B(z_{j})+c_{2}+\lambda)^{-1}}\\\label{kkk}
&&\times\sum_{k=0}^{\infty}\Big(\chi_{j}(B(z_{j})-B)(B(z_{j})+c_{2})^{-1}(B(z_{j})+c_{2})(A+B(z_{j})+c_{2}+\lambda)^{-1}\Big)^{k},
\end{eqnarray}
valid for any $\lambda\in S_{\theta_{A+B}}$, which holds due to Step 3, \eqref{betanorm} and \eqref{unibound1}, we get that $A_{j}\in\mathcal{P}(\theta_{A+B})$ for each $j\in\{1,\dots,N\}$. \\
Furthermore, $A_{j}\in\mathcal{R}(\theta_{A+B})$ for each $j\in\{1,\dots,N\}$. Indeed, let $\lambda_{1},\dots,\lambda_{\eta}\in S_{\theta_{A+B}}\backslash\{0\}$, $u_{1},\dots,u_{\eta}\in E_{0}$, $\eta\in\mathbb{N}$, and let $\{\epsilon_{k}\}_{k\in\mathbb{N}}$ be the sequence of Rademacher functions. Given $\varepsilon>0$ choose $r>0$ sufficiently small, and possibly $N$ large enough, such that, due to \eqref{betanorm} and \eqref{e1toe0normsmall}, we have
$$
\|\chi_{j}(B-B(z_{j}))(B(z_{j})+c_{2})^{-1}\|_{\mathcal{L}(E_{0})}<\varepsilon
$$
for all $ j\in\{1,\dots,N\}$. Then, by \eqref{kkk} and the $R$-sectoriality of each $A+B(z_{j})+c_{2}$, we estimate
\begin{eqnarray*}
\lefteqn{\|\sum_{\ell=1}^{\eta}\epsilon_{\ell}\lambda_{\ell}(A_{j}+\lambda_{\ell})^{-1}u_{\ell}\|_{L^{2}(0,1;E_{0})}}\\
&\leq&\|\sum_{\ell=1}^{\eta}\epsilon_{\ell}\lambda_{\ell}(A+B(z_{j})+c_{2}+\lambda_{\ell})^{-1}\\
&&\times\sum_{k=0}^{\infty}\Big(\chi_{j}(B(z_{j})-B)(B(z_{j})+c_{2})^{-1}(B(z_{j})+c_{2})(A+B(z_{j})+c_{2})^{-1}\\
&&\times(A+B(z_{j})+c_{2})(A+B(z_{j})+c_{2}+\lambda_{\ell})^{-1}\Big)^{k}u_{\ell}\|_{L^{2}(0,1;E_{0})}\\\nonumber
&\leq&K_{0}\sum_{k=0}^{\infty}\|\sum_{\ell=1}^{\eta}\epsilon_{\ell}\Big(\chi_{j}(B(z_{j})-B)(B(z_{j})+c_{2})^{-1}(B(z_{j})+c_{2})(A+B(z_{j})+c_{2})^{-1}\\
&& \times(A+B(z_{j})+c_{2})(A+B(z_{j})+c_{2}+\lambda_{\ell})^{-1}\Big)^{k}u_{\ell}\|_{L^{2}(0,1;E_{0})}\\
&\leq&K_{0}\Big(\sum_{k=0}^{\infty}(\varepsilon C_{0}(K_{0}+1))^{k}\Big)\|\sum_{\ell=1}^{\eta}\epsilon_{\ell}u_{\ell}\|_{L^{2}(0,1;E_{0})},
\end{eqnarray*}
where
$$
C_{0}=\max_{j\in\{1,\dots,N\}}\|(B(z_{j})+c_{2})(A+B(z_{j})+c_{2})^{-1}\|_{\mathcal{L}(E_{0})},
$$
and we have taken $\varepsilon>0$ sufficiently small.

{\em Step 5 {\em(}Right inverse{\em)}}. Let $\{\phi_{j}\}_{j\in\{1,\dots,N\}}$ be a partition of unity subordinate to the covering $\{U_{j}\}_{j\in\{1,\dots,N\}}$ of $\mathbb{K}$. Then, by using
$$
\phi_{j}(A+B+c_{2}+\lambda)(A_{j}+\lambda)^{-1}=\phi_{j}(A_{j}+\lambda)(A_{j}+\lambda)^{-1}=\phi_{j},
$$
$j\in\{1,\dots,N\}$, $\lambda\in S_{\theta_{A+B}}$, together with the identity $I=\sum_{j=1}^{N}\phi_{j}$, we deduce that 
\begin{equation}\label{identidecomp}
I=\sum_{j=1}^{N}\phi_{j}(A+B+c_{2}+\lambda)(A_{j}+\lambda)^{-1}.
\end{equation}
Recall that multiplication by $\phi_{j}$ induces a bounded map on $\mathcal{D}(A)\cap E_{1}$. Furthermore, the commutator $[A+B+c_{2}+\lambda,\phi_{j}]=[A,\phi_{j}]$ is a ($\mu-1$)-order differential operator with $W^{1,\infty}(\mathbb{K};\mathbb{C})$-coefficients, and hence it induces a bounded map from $\mathcal{D}(A)\cap E_{1}$ to $E_{0}$. Thus, by \eqref{identidecomp} we find
$$
I=\sum_{j=1}^{N}\big([\phi_{j},A+B+c_{2}+\lambda]+(A+B+c_{2}+\lambda)\phi_{j}\big)(A_{j}+\lambda)^{-1},
$$
i.e.
\begin{equation}\label{erd}
I+\sum_{j=1}^{N}[A,\phi_{j}](A_{j}+\lambda)^{-1}=(A+B+c_{2}+\lambda)\sum_{j=1}^{N}\phi_{j}(A_{j}+\lambda)^{-1}.
\end{equation}
Since each $A_{j}$ is sectorial, by \cite[(I.2.5.2) and (I.2.9.6)]{Am} and \cite[Theorem 5.6.9]{HNVW}, for any $\vartheta\in(0,1)$ and $\psi\in(0,\vartheta)$ we have
\begin{eqnarray*}
\mathcal{D}(A)\cap E_{1}\hookrightarrow \mathcal{D}(A_{j}^{\vartheta})\hookrightarrow(E_{0},\mathcal{D}(A)\cap E_{1})_{\psi,q}\hookrightarrow(E_{0},\mathcal{D}(A))_{\psi,q}\hookrightarrow H_{q}^{\psi\mu-\varepsilon}(\mathbb{K};X_{0}),
\end{eqnarray*}
for all $\varepsilon>0$ sufficiently small. Note that, in the case of $\mathbb{K}=\mathbb{V}$, by choosing a covering of $\mathbb{V}$ by coordinate charts and then taking a subordinate partition of unity, \cite[Theorem 5.6.9]{HNVW} extends to vector valued Bessel potential spaces on $\mathbb{V}$. Hence, by choosing $(\mu-1)/\mu<\psi<\vartheta<1$ we find that $[A,\phi_{j}]A_{j}^{-\vartheta}\in\mathcal{L}(E_{0})$, for each $j\in\{1,\dots,N\}$. Thus, by Remark \ref{TaLnLem}, each of 
\begin{equation}\label{intermdecay}
\|[A,\phi_{j}](A_{j}+c_{3}+\lambda)^{-1}\|_{\mathcal{L}(E_{0})} =\|[A,\phi_{j}]A_{j}^{-\vartheta}A_{j}^{\vartheta}(A_{j}+c_{3}+\lambda)^{-1}\|_{\mathcal{L}(E_{0})} 
\end{equation}
can become arbitrary small, uniformly in $\lambda\in S_{\theta_{A+B}}$, by choosing $c_{3}>0$ sufficiently large. Therefore, for such $c_{3}>0$, by \eqref{erd} we get 
\begin{equation}\label{erd2}
(A+B+c_{4}+\lambda)\Big(\sum_{j=1}^{N}\phi_{j}(A_{j}+c_{3}+\lambda)^{-1}\Big)\Big(I+\sum_{j=1}^{N}[A,\phi_{j}](A_{j}+c_{3}+\lambda)^{-1}\Big)^{-1}=I,
\end{equation}
in $E_{0}$ for all $\lambda\in S_{\theta_{A+B}}$, where $c_{4}=c_{2}+c_{3}$, i.e. $A+B+c_{4}+\lambda$ has a right inverse that belongs to $\mathcal{L}(E_{0},\mathcal{D}(A)\cap E_{1})$.\\
{\em Step 6 {\em(}Left inverse{\em)}}. Take $\lambda\in S_{\theta_{A+B}}$, $u\in\mathcal{D}(A)\cap E_{1}$, $f\in E_{0}$ and consider the equation
$$
(A+B+c_{4}+\lambda)u=f.
$$ 
By multiplying with $\phi_{j}$, $j\in\{1,\dots,N\}$, we get 
$$
(A+B+c_{4}+\lambda)\phi_{j}u=\phi_{j}f+[A+B+c_{4}+\lambda,\phi_{j}]u=\phi_{j}f+[A,\phi_{j}]u.
$$
Note that $A+B+c_{4}+\lambda=A_{j}+c_{3}+\lambda+(1-\chi_{j})(B-B(z_{j}))$ and $(1-\chi_{j})\phi_{j}=0$. Hence, applying the resolvent $(A_{j}+c_{3}+\lambda)^{-1}$ we obtain 
$$
\phi_{j}u=(A_{j}+c_{3}+\lambda)^{-1}(\phi_{j}f+[A,\phi_{j}]u),
$$
where by summing up we find 
\begin{equation}\label{llk}
u=\sum_{j=1}^{N}(A_{j}+c_{3}+\lambda)^{-1}\phi_{j}f+\sum_{j=1}^{N}(A_{j}+c_{3}+\lambda)^{-1}[A,\phi_{j}]u.
\end{equation}

Each of the commutators $[A,\phi_{j}]$ induces a bounded map from $\mathcal{D}(A)\cap E_{1}$ to $H_{q}^{1}(\mathbb{K};X_{0})$. Furthermore, by \cite[(I.2.5.2) and (I.2.9.6)]{Am} and \cite[Theorem 5.6.9]{HNVW}, we have that $H_{q}^{1}(\mathbb{K};X_{0})\hookrightarrow \mathcal{D}(A_{j}^{\rho})$, for any $\rho\in(0,1/\mu)$ and $j\in\{1,\dots,N\}$. Hence, $A_{j}^{\rho}[A,\phi_{j}]\in\mathcal{L}(\mathcal{D}(A)\cap E_{1},E_{0})$ for each $j\in\{1,\dots,N\}$. In addition, by \eqref{kkk} we have
\begin{eqnarray*}
\lefteqn{(A+B(z_{j})+c_{2})(A_{j}+c_{3}+\lambda)^{-1}A_{j}^{-\rho}}\\
&=&(A+B(z_{j})+c_{2})(A+B(z_{j})+c_{4}+\lambda)^{-1}A_{j}^{-\rho}\\
&&+(A+B(z_{j})+c_{2})(A+B(z_{j})+c_{4}+\lambda)^{-1}\\
&&\times\Big(\sum_{k=0}^{\infty}\Big(\chi_{j}(B(z_{j})-B)(A+B(z_{j})+c_{4}+\lambda)^{-1}\Big)^{k}\Big)\\
&&\times\chi_{j}(B(z_{j})-B)(A+B(z_{j})+c_{2})^{-1}(A+B(z_{j})+c_{2})(A+B(z_{j})+c_{4}+\lambda)^{-1}A_{j}^{-\rho}.
\end{eqnarray*}
Thus, by Lemma \ref{Lem1} we find that each of the norms
$$
\|\sum_{j=1}^{N}(A_{j}+c_{3}+\lambda)^{-1}A_{j}^{-\rho}A_{j}^{\rho}[A,\phi_{j}]\|_{\mathcal{L}(\mathcal{D}(A)\cap E_{1})}
$$
becomes arbitrary small, uniformly in $\lambda$, by taking $c_{3}$ sufficiently large. By \eqref{llk} we conclude that for $c_{4}$ large enough the operator $A+B+c_{4}+\lambda$ admits a left inverse which belongs to $\mathcal{L}(E_{0},\mathcal{D}(A)\cap E_{1})$.\\
{\em Step 7 {\em(}$R$-sectoriality{\em)}}. Due to \eqref{erd2}, the resolvent of $A+B+c_{4}$ is given by
$$
(A+B+c_{4}+\lambda)^{-1}=\Big(\sum_{j=1}^{N}\phi_{j}(A_{j}+c_{3}+\lambda)^{-1}\Big)\Big(I+\sum_{j=1}^{N}[A,\phi_{j}](A_{j}+c_{3}+\lambda)^{-1}\Big)^{-1},
$$
for all $\lambda\in S_{\theta_{A+B}}$. Using this expression, $R$-sectoriality for $A+B+c_{4}$ will follow by the $R$-sectoriality of each $A_{j}+c_{3}\in\mathcal{R}(K_{j},\theta_{A+B})$, $j\in\{1,\dots,N\}$. More precisely
\begin{eqnarray}\nonumber
\lefteqn{\|\sum_{\ell=1}^{\eta}\epsilon_{\ell}\lambda_{\ell}(A+B+c_{4}+\lambda_{\ell})^{-1}u_{\ell}\|_{L^{2}(0,1;E_{0})}}\\\nonumber
&\leq& C_{1}\sum_{j=1}^{N}\|\sum_{\ell=1}^{\eta}\epsilon_{\ell}\lambda_{\ell}(A_{j}+c_{3}+\lambda_{\ell})^{-1}\\\nonumber
&&\times\Big(\sum_{k=0}^{\infty}\Big(\sum_{i=1}^{N}[\phi_{i},A](A_{i}+c_{3}+\lambda_{\ell})^{-1}\Big)^{k}\Big)u_{\ell}\|_{L^{2}(0,1;E_{0})}\\\nonumber
&\leq& C_{1}NK\|\sum_{\ell=1}^{\eta}\epsilon_{\ell}\Big(\sum_{k=0}^{\infty}\Big(\sum_{j=1}^{N}[\phi_{j},A](A_{j}+c_{3}+\lambda_{\ell})^{-1}\Big)^{k}\Big)u_{\ell}\|_{L^{2}(0,1;E_{0})}\\\label{finalrsec}
&\leq& C_{1}NK\sum_{k=0}^{\infty}\|\sum_{\ell=1}^{\eta}\epsilon_{\ell}\Big(\sum_{j=1}^{N}[A,\phi_{j}](A_{j}+c_{3}+\lambda_{\ell})^{-1}\Big)^{k}u_{\ell}\|_{L^{2}(0,1;E_{0})},
\end{eqnarray}
where
$$
C_{1}=\max_{j\in\{1,\dots,N\}}\|\phi_{j}\cdot\|_{\mathcal{L}(E_{0})} \quad \text{and} \quad K= \max_{j\in\{1,\dots,N\}}K_{j}.
$$
For $k\geq1$, the $k$-th term on the right-hand side of \eqref{finalrsec} is estimated by a sum of $N^{k}$ terms the form
\begin{equation}\label{extraterm}
\|\sum_{\ell=1}^{\eta}\epsilon_{\ell}[A,\phi_{j_{1}}](A_{j_{1}}+c_{3}+\lambda_{\ell})^{-1}\dots [A,\phi_{j_{k}}](A_{j_{k}}+c_{3}+\lambda_{\ell})^{-1}u_{\ell}\|_{L^{2}(0,1;E_{0})},
\end{equation}
where $j_{i}\in\{1,\dots,N\}$, $i\in\{1,\dots,k\}$. Similarly to \eqref{intermdecay} we have
\begin{eqnarray*}
\lefteqn{[A,\phi_{j_{i}}](A_{j_{i}}+c_{3}+\lambda_{\ell})^{-1}}\\
&=&[A,\phi_{j_{i}}]A^{-\vartheta}A^{\vartheta}(A_{j_{i}}+c_{3})^{-1}(A_{j_{i}}+c_{3})(A_{j_{i}}+c_{3}+\lambda_{\ell})^{-1}, \quad i\in\{1,\dots,k\}.
\end{eqnarray*}
Moreover, for any $\varepsilon>0$ choose $c_{3}>0$ large enough such that
$$
\max_{j\in\{1,\dots,N\}}\|[A,\phi_{j}]A^{-\vartheta}A^{\vartheta}(A_{j}+c_{3})^{-1}\|_{\mathcal{L}(E_{0})}<\varepsilon,
$$
where we have used Remark \ref{TaLnLem}. Now \eqref{extraterm}, due to \eqref{Stheta}, can be estimated by 
$$
\Big(\varepsilon K\Big(1+\frac{2}{S(\theta_{A+B})}\Big)\Big)^{k}\|\sum_{\ell=1}^{\eta}\epsilon_{\ell}u_{\ell}\|_{L^{2}(0,1;E_{0})},
$$
Since $\varepsilon>0$ is arbitrary, $R$-sectoriality for $A+B+c_{4}$, for sufficiently large values of $c_{4}$, follows by the above estimate and \eqref{finalrsec}. \mbox{\ } \hfill $\Box$

\begin{remark} {\em {\bf(i)} In Theorem \ref{t1}, when $\mathbb{K}=\mathbb{R}^{n}$, the condition of $B(\cdot)$ being constant on $\mathbb{R}^{n}\backslash \mathcal{K}$ can be relaxed to certain decay property of $B(\cdot)$, after modifying the above proof. In particular, the theorem holds true when $B(z)=B_{0}+B_{1}(z)$, where $B_{0}, B_{1}(z) \in \mathcal{L}(X_{1},X_{0})$, $z\in\mathbb{R}^{n}$, $B_{1}(\cdot)\in C(\mathbb{R}^{n};\mathcal{L}(X_{1},X_{0}))$ and $\|B_{1}(z)\|_{\mathcal{L}(X_{1},X_{0})}\rightarrow 0$ when $z\rightarrow\infty$.\\
{\bf(ii)} Theorem \ref{t1}, as a non-commuting version of the classical Kalton-Weis result \cite[Theorem 6.3]{KaW}, can be compared with \cite[Theorem 3.1]{PS}. Since it concerns Bochner spaces, it is less general than \cite[Theorem 3.1]{PS}, but, on the other hand, it is free of the Da Prato-Grisvard and Labbas-Terreni commutation assumption \cite[(3.1)]{PS} and \cite[(3.2)]{PS}, respectively.}
\end{remark}

Concerning the domain of the perturbed operator in Theorem \ref{t1}, we show the following result, which is also for later use. Denote by $[\cdot,\cdot]_{\eta}$, $\eta\in(0,1)$, the complex interpolation functor of exponent $\eta$. 

\begin{proposition}[Mixed derivative]\label{mxtderPr}
Let $n\in\mathbb{N}$, $q\in(1,\infty)$, $r\geq0$ and $\rho\in(0,1)$. Moreover, let $X_{1}\hookrightarrow X_{0}$ be a densely and continuously injected complex Banach couple, such that $X_{0}$ and $X_{1}$ are UMD. Then
\begin{eqnarray*}
H_{q}^{r+2}(\mathbb{R}^{n};X_{0})\cap H_{q}^{r}(\mathbb{R}^{n};X_{1}) &\hookrightarrow& [H_{q}^{r}(\mathbb{R}^{n};[X_{0},X_{1}]_{\rho}),H_{q}^{r+2}(\mathbb{R}^{n};[X_{0},X_{1}]_{\rho})]_{1-\rho}\\
(\text{with equivalent norms}) &=& H_{q}^{r+2-2\rho}(\mathbb{R}^{n};[X_{0},X_{1}]_{\rho}).
\end{eqnarray*}
\end{proposition}
\begin{proof}
Since the UMD property is preserved under interpolation (see e.g. \cite[Theorem III.4.5.2]{Am}), by \cite[(I.2.5.2)]{Am} and \cite[Theorem 14.7.12]{HNVW2}, we have
\begin{eqnarray*}
H_{q}^{r+2}(\mathbb{R}^{n};X_{0})\cap H_{q}^{r}(\mathbb{R}^{n};X_{1}) &\hookrightarrow& [H_{q}^{r+2}(\mathbb{R}^{n};X_{0}),H_{q}^{r}(\mathbb{R}^{n};X_{1})]_{\rho}\\
&=& H_{q}^{r+2-2\rho}(\mathbb{R}^{n};[X_{0},X_{1}]_{\rho})\\
&=& [H_{q}^{r}(\mathbb{R}^{n};[X_{0},X_{1}]_{\rho}),H_{q}^{r+2}(\mathbb{R}^{n};[X_{0},X_{1}]_{\rho})]_{1-\rho}.
\end{eqnarray*}
\end{proof}

We conclude the linear theory with the following elementary result.

\begin{lemma}\label{suminter}
Let $X$, $Y$, $Z$ be Banach spaces, all continuously embedded in the same Hausdorff topological vector space, such that $Y\hookrightarrow X$ and $X\cap Z=Y\cap Z=\{0\}$. For any $\theta\in(0,1)$ and $p\in (1,\infty)$ we have $\{X\oplus Z,Y\oplus Z\}=\{X,Y\}\oplus Z$, where $\{\cdot,\cdot\}\in \{(\cdot,\cdot)_{\theta,p}, [\cdot,\cdot]_{\theta}\}$.
\end{lemma}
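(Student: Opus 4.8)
The plan is to exploit that the couple $(X\oplus Z,Y\oplus Z)$ carries a natural pair of complementary projections, and then to invoke the interpolation property of the two functors. Since $X\cap Z=Y\cap Z=\{0\}$, every element of $X\oplus Z$ has a \emph{unique} decomposition $x+z$ with $x\in X$, $z\in Z$, so $P(x+z)=x$ and $Q=I-P$ are well defined bounded projections on $X\oplus Z$; because $Y\subset X$, an element of $Y\oplus Z$ has the same decomposition, so $P$ and $Q$ restrict to bounded projections on $Y\oplus Z$ as well. Hence $P$ and $Q$ are endomorphisms of the Banach couple $(X\oplus Z,Y\oplus Z)$, and by the interpolation property of $(\cdot,\cdot)_{\theta,p}$ and $[\cdot,\cdot]_{\theta}$ they extend to bounded projections on $W:=\{X\oplus Z,Y\oplus Z\}$. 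The identity $P+Q=I$ already holds on the ambient space $X\oplus Z\supseteq W$, and $W\hookrightarrow X\oplus Z$ continuously, so $P+Q=I$ on $W$; therefore $W=PW\oplus QW$ is a topological direct sum.

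It remains to identify the two summands. First I would record the elementary fact $\{Z,Z\}=Z$ with equivalent norm, which is immediate for both functors (for the real one $K(t,z;Z,Z)=\min\{1,t\}\|z\|_{Z}$, while for the complex one one uses constant analytic functions together with the three-lines lemma). Next, view $(X,Y)$ as a subcouple of $(X\oplus Z,Y\oplus Z)$ through the inclusion $j\colon x\mapsto x+0$, which is bounded $X\to X\oplus Z$ and $Y\to Y\oplus Z$; then $j$ and the projection $P\colon(X\oplus Z,Y\oplus Z)\to(X,Y)$ are couple morphisms with $Pj=\mathrm{id}$ and $jP=P$, and these identities persist on the interpolation level since they hold on the ambient space. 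Applying the functor, $j$ becomes an isomorphism of $\{X,Y\}$ onto its range, which is precisely $PW$; running the same argument with $Q$ and $(Z,Z)$ gives $QW=Z$. Combining, $W=\{X,Y\}\oplus Z$. Finally, $Y\hookrightarrow X$ gives $X+Y=X$ and hence $\{X,Y\}\hookrightarrow X$, so $\{X,Y\}\cap Z\subseteq X\cap Z=\{0\}$; thus the right-hand side is a genuine internal direct sum inside the common Hausdorff space and the asserted equality makes sense.

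The argument is essentially bookkeeping, and the only delicate point is to keep all the norm equivalences uniform so that the same reasoning covers $(\cdot,\cdot)_{\theta,p}$ and $[\cdot,\cdot]_{\theta}$ at once — in particular checking that the operator relations $P+Q=I$, $Pj=\mathrm{id}$, $jP=P$ survive the passage to $W$ (they do, because they hold on $X\oplus Z$ and $W$ embeds continuously into it) and verifying $\{Z,Z\}=Z$ separately for each functor. For the real case one may alternatively bypass the projection argument entirely by noting directly that $K(t,x+z;X\oplus Z,Y\oplus Z)\simeq K(t,x;X,Y)+\min\{1,t\}\|z\|_{Z}$, which produces the claimed splitting of the norm immediately.
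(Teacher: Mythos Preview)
Your proposal is correct and follows essentially the same approach as the paper: both arguments use the projection $P(u+v)=u$ and its complement $I-P$ as morphisms of the couple $(X\oplus Z,Y\oplus Z)$, invoke the interpolation property to get $P:\{X\oplus Z,Y\oplus Z\}\to\{X,Y\}$ and $I-P:\{X\oplus Z,Y\oplus Z\}\to\{Z,Z\}=Z$, and then split any $w$ as $Pw+(I-P)w$. Your write-up is somewhat more explicit (spelling out the retraction--coretraction pair $(j,P)$, verifying $\{Z,Z\}=Z$, and checking $\{X,Y\}\cap Z=\{0\}$), but the underlying argument is identical.
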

\begin{proof}
Clearly, $\{X,Y\}\hookrightarrow \{X\oplus Z,Y\oplus Z\}$ and $Z\hookrightarrow \{X\oplus Z,Y\oplus Z\}$, so that
$$
\{X,Y\}\oplus Z\hookrightarrow \{X\oplus Z,Y\oplus Z\}.
$$
For the other direction, let $X\oplus Z\ni u+v\mapsto P(u+v)=u\in X$ be the natural projection. Since $P$ restricts to a bounded map $P:Y\oplus Z\rightarrow Y$, by \cite[Theorem 1.6 and Theorem 2.6]{Lunar18} we have that $P$ also defines a bounded map $P:\{X\oplus Z,Y\oplus Z\}\rightarrow \{X,Y\}$. Similarly, $I-P$ defines a bounded map $I-P:\{X\oplus Z,Y\oplus Z\}\rightarrow \{Z,Z\}=Z$. Hence, by writing any $w\in \{X\oplus Z,Y\oplus Z\}\hookrightarrow X\oplus Z$ as $Pw+(I-P)w$, we obtain 
$$
\{X\oplus Z,Y\oplus Z\}\hookrightarrow \{X,Y\}\oplus Z.
$$
\end{proof}

We describe now an abstract maximal $L^{d}$-regularity result for quasilinear parabolic equations. Let $d\in(1,\infty)$, $U$ be an open subset of $(X_{1},X_{0})_{1/d,d}$, $A(\cdot): U\rightarrow \mathcal{L}(X_{1},X_{0})$ and $F(\cdot,\cdot): U\times [0,T_{0}]\rightarrow X_{0}$, for some $T_{0}>0$. Consider the problem
\begin{eqnarray}\label{aqpp1}
u'(t)+A(u(t))u(t)&=&F(u(t),t)+G(t),\quad t\in(0,T),\\\label{aqpp2}
u(0)&=&u_{0},
\end{eqnarray}
where $T\in(0,T_{0}]$, $u_{0}\in U$ and $G\in L^{d}(0,T_{0};X_{0})$. If we denote by $C^{1-}(\cdot;\cdot)$ or $C^{1-,1-}(\cdot;\cdot)$ the space of Lipschitz continuous maps, then the following short time existence result holds true.
\begin{theorem}[{\rm Cl\'ement and Li, \cite[Theorem 2.1]{CL}}]\label{ClementLi}
Assume that:\\
{\em (H1)} $A(\cdot)\in C^{1-}(U;\mathcal{L}(X_{1},X_{0}))$.\\
{\em (H2)} $F(\cdot,\cdot)\in C^{1-,1-}(U\times [0,T_{0}];X_{0})$.\\
{\em (H3)} $A(u_{0})$ has maximal $L^{d}$-regularity.\\
Then, there exists a $T\in(0,T_{0}]$ and a unique $u\in H_{d}^{1}(0,T;X_{0})\cap L^{d}(0,T;X_{1})$ solving \eqref{aqpp1}-\eqref{aqpp2}. 
\end{theorem}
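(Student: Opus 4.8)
\emph{Proof proposal.} The plan is to reduce \eqref{aqpp1}--\eqref{aqpp2} to a fixed point equation in the maximal-regularity space and solve it by the contraction mapping principle, perturbing off the linear solution associated with $A(u_{0})$. Write $\mathbb{E}(T)=H_{q}^{1}(0,T;X_{0})\cap L^{q}(0,T;X_{1})$ and $X_{\gamma}=(X_{1},X_{0})_{1/q,q}$; I will use that $\mathbb{E}(T)\hookrightarrow C([0,T];X_{\gamma})$ and that the norm of this embedding, restricted to $\{u\in\mathbb{E}(T):u(0)=0\}$, can be taken independent of $T\in(0,T_{0}]$. Put $A_{0}=A(u_{0})$. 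After the substitution $u(t)=e^{\omega t}\tilde{u}(t)$ with $\omega>0$ large --- which replaces $A(\cdot)$ by $A(\cdot)+\omega$ --- one may assume that $A_{0}$ is moreover invertible, and then maximal $L^{q}$-regularity of $A_{0}$ (hypothesis (H3)) is equivalent to the map $u\mapsto(u'+A_{0}u,u(0))$ being an isomorphism $\mathbb{E}(T)\to L^{q}(0,T;X_{0})\times X_{\gamma}$ with inverse bounded by a constant $C_{0}$ uniform in $T\in(0,T_{0}]$.

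First I would fix the reference solution $u_{\ast}\in\mathbb{E}(T_{0})$ of $u_{\ast}'+A_{0}u_{\ast}=G$, $u_{\ast}(0)=u_{0}$, and record that $\eta(T):=\sup_{0\le t\le T}\|u_{\ast}(t)-u_{0}\|_{X_{\gamma}}+\|u_{\ast}\|_{L^{q}(0,T;X_{1})}\to0$ as $T\to0^{+}$. Then I would define $\Phi$ by letting $\Phi(u)\in\mathbb{E}(T)$ be the solution of $v'+A_{0}v=(A_{0}-A(u(\cdot)))u(\cdot)+F(u(\cdot),\cdot)+G$, $v(0)=u_{0}$; fixed points of $\Phi$ are exactly the solutions of \eqref{aqpp1}--\eqref{aqpp2}, and $\Phi(u)-u_{\ast}$ solves the linear equation with right-hand side $(A_{0}-A(u))u+F(u,\cdot)$ and zero initial value. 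For $R>0$, $T\in(0,T_{0}]$ set $\Sigma(R,T)=\{u\in\mathbb{E}(T):u(0)=u_{0},\ \|u-u_{\ast}\|_{\mathbb{E}(T)}\le R\}$, a complete metric space. Since $U$ is open and $\|u(t)-u_{0}\|_{X_{\gamma}}\le C_{0}'R+\eta(T)$ on $\Sigma(R,T)$ by the uniform embedding, for $R$ and then $T$ small every $u\in\Sigma(R,T)$ stays in a fixed ball $\mathcal{U}_{0}$ with $\overline{\mathcal{U}_{0}}\subset U$, so by (H1)--(H2) the right-hand sides lie in $L^{q}(0,T;X_{0})$ and $\Phi$ is well defined there.

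Next I would verify the self-mapping and contraction properties. Using the uniform maximal-regularity bound and the local Lipschitz constants $L$ of $A$ on $\mathcal{U}_{0}$ and $L'$ of $F$ on $\mathcal{U}_{0}\times[0,T_{0}]$, together with $\|(A_{0}-A(u(t)))u(t)\|_{X_{0}}\le L\|u_{0}-u(t)\|_{X_{\gamma}}\|u(t)\|_{X_{1}}$ and $\|F(u(t),t)\|_{X_{0}}\le\|F(u_{0},0)\|_{X_{0}}+L'(\|u(t)-u_{0}\|_{X_{\gamma}}+t)$, integration in $t$ gives
\[
\|\Phi(u)-u_{\ast}\|_{\mathbb{E}(T)}\le C_{0}\big(L(C_{0}'R+\eta(T))(R+\eta(T))+C(R)T^{1/q}\big),
\]
which is $\le R$ once $R$ and then $T=T(R)$ are chosen small; hence $\Phi(\Sigma(R,T))\subseteq\Sigma(R,T)$. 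An analogous estimate, splitting $(A_{0}-A(u_{1}))u_{1}-(A_{0}-A(u_{2}))u_{2}=(A_{0}-A(u_{1}))(u_{1}-u_{2})+(A(u_{2})-A(u_{1}))u_{2}$ and using $\|u_{1}-u_{2}\|_{C([0,T];X_{\gamma})}\le C_{0}'\|u_{1}-u_{2}\|_{\mathbb{E}(T)}$, yields $\|\Phi(u_{1})-\Phi(u_{2})\|_{\mathbb{E}(T)}\le\kappa(R,T)\|u_{1}-u_{2}\|_{\mathbb{E}(T)}$ with $\kappa(R,T)\to0$ in the same regime. Fixing $R,T$ with $\kappa(R,T)<1$, Banach's fixed point theorem gives a unique $u\in\Sigma(R,T)$ solving \eqref{aqpp1}--\eqref{aqpp2}; uniqueness in all of $\mathbb{E}(T)$ follows by the usual continuation argument (two solutions agree on a maximal $[0,T^{\ast}]$, and if $T^{\ast}<T$ one reruns the fixed-point argument from the datum $u(T^{\ast})\in U$ to extend the agreement, a contradiction).

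The step I expect to be the main obstacle is none of the individual estimates --- these are routine --- but the bookkeeping that makes them fit together: one needs the $T$-uniformity (for $T\le T_{0}$) of both the maximal-regularity constant $C_{0}$ and of the embedding $\{u\in\mathbb{E}(T):u(0)=0\}\hookrightarrow C([0,T];X_{\gamma})$, and one must respect the order of the choices --- pick $R$ first, so that the local Lipschitz constants of $A$ and $F$ are applied on a small $X_{\gamma}$-ball around $u_{0}$, and only then $T=T(R)$, so that $\eta(T)$ and $T^{1/q}$ become as small as required.
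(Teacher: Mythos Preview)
The paper does not supply its own proof of this theorem: it is stated with attribution to Cl\'ement and Li \cite[Theorem 2.1]{CL} and used as a black box in the application to the porous medium equation. Your proposal follows the standard contraction-mapping argument in the maximal $L^{q}$-regularity space that underlies the original proof in \cite{CL}, and the outline is correct; in particular, your emphasis on the $T$-uniformity of the maximal-regularity constant and of the trace embedding on $\{u\in\mathbb{E}(T):u(0)=0\}$, together with the order of choices (first $R$, then $T$), is exactly the point that makes the estimates close.
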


We close this section with the following embedding of the maximal $L^q$-regularity space, namely
\begin{equation}\label{interpemb}
H_{d}^{1}(0,T;X_{0})\cap L^{d}(0,T;X_{1})\hookrightarrow C([0,T];(X_{1},X_{0})_{\frac{1}{d},d}), \quad d\in(1,\infty), \, T>0,
\end{equation}
see e.g. \cite[Theorem III.4.10.2]{Am}.

\section{Cone differential operators and function spaces on manifolds with edges}

As explained in the introduction the Laplacian $\Delta_{\mathfrak{g}}$ is a second order cone differential operator. In this section we recall some basic facts from the related theory of cone pseudo-differential operators, following \cite{Schulze1}, \cite{Schulze2}, \cite{Schulze3} and \cite{Sei}. We also recall results concerning closed extensions and maximal regularity theory for the Laplacian; for more details and generalizations the reader is referred to \cite{CSS}, \cite{GKM}, \cite{GKM2}, \cite{GM}, \cite{Le}, \cite{RS1}, \cite{RS2}, \cite{RS}, \cite{RS3}, \cite{SS}, \cite{SS2}. Moreover, we shall discuss some embedding properties of Sobolev spaces which will be needed in the sequel.

In order to keep our ground space independent of $z\in\mathcal{V}$, we will fix the metric on $\mathcal{B}$, i.e. we will consider all operators on $\mathbb{B}=(\mathcal{B},\mathfrak{p})$. Denote by $\mathrm{Diff}^{k}(\cdot)$ the space of differential operators of order $k\in\mathbb{N}_{0}$ with smooth coefficients. A cone differential operator of order $\mu\in\mathbb{N}_{0}$ is any $\mu$-th order differential operator $A$ with smooth coefficients in the interior $\mathbb{B}^{\circ}$ of $\mathbb{B}$, such that its restriction to the collar neighborhood $(0,1)\times\partial\mathcal{B}$ admits the form 
\begin{equation}\label{Aconeww}
A=x^{-\mu}\sum_{k=0}^{\mu}a_{k}(x)(-x\partial_{x})^{k}, \quad \mbox{where} \quad a_{k}\in C^{\infty}([0,1);\mathrm{Diff}^{\mu-k}(\partial\mathbb{B})).
\end{equation}
If in addition to the usual homogeneous principal symbol $\sigma_{\psi}^{\mu}(A)\in C^{\infty}(T^{\ast}\mathbb{B}\backslash\{0\})$, the {\em rescaled principal symbol} $\widetilde{\sigma}_{\psi}^{\mu}(A)\in C^{\infty}((T^{\ast}\partial\mathbb{B}\times\mathbb{R})\backslash\{0\})$ of a cone differential operator, defined by 
$$
\widetilde{\sigma}_{\psi}^{\mu}(A)=\lim_{x\rightarrow0}x^{\mu}\sigma_{\psi}^{\mu}(A)(x,y,x^{-1}\tau,\xi),
$$
is also pointwise invertible, then the operator is called {\em cone-elliptic}; this is the case for the Laplacian $\Delta_{\mathfrak{g}}$.

Recall the fixed cut-off function $\omega$ and denote by $C_{c}^{\infty}(\cdot)$ the space of smooth compactly supported functions. Cone differential operators act naturally on scales of Mellin-Sobolev spaces.

\begin{definition}{\em (\cite[Definition 2.1.19]{Schulze2})}\label{MellSob}
Let $\gamma\in\mathbb{R}$ and consider the map 
$$
M_{\gamma}: C_{c}^{\infty}(\mathbb{R}_{+}\times\mathbb{R}^{\nu-1})\rightarrow C_{c}^{\infty}(\mathbb{R}^{\nu}) \quad \mbox{defined by} \quad u(x,y)\mapsto e^{(\gamma-\frac{\nu}{2})x}u(e^{-x},y). 
$$
Furthermore, take a covering $k_{j}:\Omega_{j}\subseteq\partial\mathcal{B} \rightarrow\mathbb{R}^{\nu-1}$, $j\in\{1,\dots,\ell\}$, $\ell\in\mathbb{N}$, of $\partial\mathcal{B}$ by coordinate charts and let $\{\varphi_{j}\}_{j\in\{1,\dots,\ell\}}$ be a subordinate partition of unity. For any $p\in(1,\infty)$ and $s\in\mathbb{R}$ let $\mathcal{H}^{s,\gamma}_p(\mathbb{B})$ be the space of all distributions $u$ on $\mathbb{B}^{\circ}$ such that 
$$
\|u\|_{\mathcal{H}^{s,\gamma}_p(\mathbb{B})}=\sum_{j=1}^{\ell}\|M_{\gamma}(1\otimes k_{j})_{\ast}(\omega\varphi_{j} u)\|_{H^{s}_{p}(\mathbb{R}^{\nu})}+\|(1-\omega)u\|_{H^{s}_{p}(\mathbb{B})}
$$
is defined and finite. The space $\mathcal{H}^{s,\gamma}_{p}(\mathbb{B})$, called {\em (weighted) Mellin-Sobolev space}, is independent of the choice of the cut-off function $\omega$, the covering $\{k_{j}\}_{j\in\{1,\dots,\ell\}}$ and the partition $\{\varphi_{j}\}_{j\in\{1,\dots,\ell\}}$. In particular, if $s\in \mathbb{N}_{0}$, then equivalently, $\mathcal{H}^{s,\gamma}_{p}(\mathbb{B})$ is the space of all functions $u$ in $H^s_{p,loc}(\mathbb{B}^\circ)$ such that near the boundary \eqref{MellSobint} holds.
\end{definition}

If $A$ is as in \eqref{Aconeww}, then it induces a bounded map
$$
A: \mathcal{H}^{s+\mu,\gamma+\mu}_p(\mathbb{B}) \rightarrow \mathcal{H}^{s,\gamma}_{p}(\mathbb{B}),
$$
for any $p\in(1,\infty)$ and $s,\gamma\in\mathbb{R}$. On the other hand, when a cone-elliptic operator $A$ is regarded as an unbounded operator in $\mathcal{H}^{s,\gamma}_p(\mathbb{B})$, $p\in(1,\infty)$, $s,\gamma\in\mathbb{R}$, with domain $C_{c}^{\infty}(\mathbb{B}^{\circ})$, then its minimal and maximal domains differ in general. More precisely, for any $z\in\mathcal{V}$, restricting to the Laplacian $\Delta_{\mathfrak{g}}$, the domain of its minimal extension (i.e. its closure) $\underline{\Delta}_{\mathfrak{g},\min,s}$ is given by 
$$
\mathcal{D}(\underline{\Delta}_{\mathfrak{g},\min,s})=\Big\{u\in \bigcap_{\varepsilon>0}\mathcal{H}^{s+2,\gamma+2-\varepsilon}_p(\mathbb{B}) \, |\, \Delta_{\mathfrak{g}} u\in \mathcal{H}^{s,\gamma}_p(\mathbb{B})\Big\},
$$
and the following embedding holds
$$
\mathcal{H}_{p}^{s+2,\gamma+2}(\mathbb{B})\hookrightarrow\mathcal{D}(\underline{\Delta}_{\mathfrak{g},\min,s})\hookrightarrow \bigcap_{\varepsilon>0}\mathcal{H}_{p}^{s+2,\gamma+2-\varepsilon}(\mathbb{B}).
$$
If in addition the {\em conormal symbol} of $\Delta_{\mathfrak{g}}$, i.e. the following family of differential operators
$$
\mathbb{C} \ni \lambda \mapsto \lambda^{2}-(n-1)\lambda + \Delta_{\mathfrak{h}(z,0)} \in \mathcal{L}(H_{2}^{2}(\partial\mathbb{B}),H_{2}^{0}(\partial\mathbb{B})),
$$
 is invertible on the line $\{\lambda\in\mathbb{C}\,|\, \mathrm{Re}(\lambda)= \frac{\nu}{2}-2-\gamma\}$, then we have precisely $\mathcal{D}(\underline{\Delta}_{\mathfrak{g},\min,s})=\mathcal{H}^{s+2,\gamma+2}_p(\mathbb{B})$, i.e.
$$
\mathcal{D}(\underline{\Delta}_{\mathfrak{g},\min,s})=\mathcal{H}_{p}^{s+2,\gamma+2}(\mathbb{B}) \quad \text{iff} \quad \lambda_{z,j}\neq -(\gamma+\nu/2)(\gamma+2-\nu/2), \,\, j\in\mathbb{N}_{0},
$$ 
where $\dots<\lambda_{z,1}<\lambda_{z,0}=0$ stands for the spectrum of $\Delta_{\mathfrak{h}(z,0)}$. It can be shown that the conormal symbol of $\Delta_{\mathfrak{g}}$ is meromorphically invertible in $\mathbb{C}$ with poles in
$$
q_{z,j}^{\pm}=\frac{\nu-2}{2}\pm\sqrt{\Big(\frac{\nu-2}{2}\Big)^{2}-\lambda_{z,j}}, \quad j\in\mathbb{N}_{0};
$$
these poles are always simple with the exception of $q_{z,0}^{+}=q_{z,0}^{-}=0$ being a double pole in case $\nu=2$.

Concerning the domain of the maximal extension $\underline{\Delta}_{\mathfrak{g},\max,s}$ of $\Delta_{\mathfrak{g}}$, which is defined as usual by $\mathcal{D}(\underline{\Delta}_{\mathfrak{g},\max,s})=\{u\in\mathcal{H}^{s,\gamma}_{p}(\mathbb{B}) \, |\, \Delta_{\mathfrak{g}} u\in \mathcal{H}^{s,\gamma}_{p}(\mathbb{B})\}$, the following decomposition holds
\begin{equation}\label{dmax1}
\mathcal{D}(\underline{\Delta}_{\mathfrak{g},\max,s})=\mathcal{D}(\underline{\Delta}_{\mathfrak{g},\min,s})\oplus\mathcal{E}_{\Delta_{\mathfrak{g}},\gamma}.
\end{equation}
The space $\mathcal{E}_{\Delta_{\mathfrak{g}},\gamma}$ is a finite dimensional space consisting of linear combinations of $C^{\infty}(\mathbb{B}^{\circ})$-functions that vanish on $\mathcal{B}\backslash([0,1)\times \partial\mathcal{B})$ and, in local coordinates on $(0,1)\times\partial\mathcal{B}$, they are of the form $\omega(x)c(y)x^{-\rho}\log^{k}(x)$, with $c\in C^{\infty}(\partial\mathbb{B})$, $\rho\in I_{\gamma}=[\frac{\nu-4}{2}-\gamma,\frac{\nu}{2}-\gamma)$ and $k\in\{0,1\}$. The exponents $\rho$ and $k$ are determined explicitly by $\nu$, the metric $\mathfrak{h}(z,0)$ and the weight $\gamma$. If in particular $\mathfrak{h}(z,x)$ is constant in $x$ when $x$ is close to zero, then we have precisely 
\begin{equation}\label{Espace}
\mathcal{E}_{\Delta_{\mathfrak{g}},\gamma}=\bigoplus_{q_{z,j}^{\pm}\in I_{\gamma}} \mathcal{E}_{\Delta_{\mathfrak{g}},\gamma,q_{z,j}^{\pm}},
\end{equation}
where the elements of $\mathcal{E}_{\Delta_{\mathfrak{g}},\gamma,q_{z,j}^{\pm}}$ are of the form $\omega(x)c(y)x^{-q_{z,j}^{\pm}}\log^{k}(x)$ with $c\in \mathrm{Ker}(\lambda_{z,j}-\Delta_{\mathfrak{h}(z,0)})\subset C^{\infty}(\partial\mathbb{B})$ and where $k$ runs up to the multiplicity of $q_{z,j}^{\pm}$; for further details see \cite[Section 6.2]{SS}. As a consequence, there are several closed extensions of $\Delta_{\mathfrak{g}}$ in $\mathcal{H}^{s,\gamma}_p(\mathbb{B})$, also called {\em realizations}; each one corresponds to a subspace of $\mathcal{E}_{\Delta_{\mathfrak{g}},\gamma}$ in \eqref{dmax1}.

Under certain choice of the weight $\gamma$, the space $\mathbb{C}_{\omega}$ becomes a subspace of $\mathcal{E}_{\Delta_{\mathfrak{g}},\gamma}$ and the realization of the Laplacian with domain $\mathcal{H}_{p}^{s+2,\gamma+2}(\mathbb{B})\oplus\mathbb{C}_{\omega}$ satisfies the property of maximal $L^{q}$-regularity. 

\begin{theorem}[{\rm Schrohe and Seiler, \cite[Theorem 6.7]{SS}}]\label{SecLapOrigin}
Let $p\in(1,\infty)$, $s\geq0$ and let $\gamma$ satisfying \eqref{gamma2} with $\lambda_{1}$ replaced by $\lambda_{z,1}$. For any $z\in \mathcal{V}$ consider the closed extension $\underline{\Delta}_{\mathfrak{g},z,s}$ of the Laplacian $\Delta_{\mathfrak{g}}$ in $\mathcal{H}_{p}^{s,\gamma}(\mathbb{B})$ with domain $\mathcal{H}_{p}^{s+2,\gamma+2}(\mathbb{B})\oplus\mathbb{C}_{\omega}$. Then, for any $c>0$ and $\theta\in[0,\pi)$ we have $c-\underline{\Delta}_{\mathfrak{g},z,s}\in\mathcal{H}^{\infty}(\theta)$.
\end{theorem}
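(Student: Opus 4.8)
\emph{Proof strategy for Theorem \ref{SecLapOrigin}.}
This is \cite[Theorem 6.7]{SS}, and the argument runs through a parameter-dependent resolvent construction inside the cone calculus. Fix $z\in\mathcal{V}$. Since $c>0$ while, near the cone tip, $-\Delta_{z}$ is the nonnegative conic Laplacian (whose non-minimal extension $\underline{\Delta}_{z,s}$ may carry spectrum at the origin through the constants in $\mathbb{C}_{\omega}$), the shift by $c$ makes $c-\underline{\Delta}_{z,s}$ invertible, and the plan is to produce, for every $\theta\in[0,\pi)$, a uniform estimate of $(c-\underline{\Delta}_{z,s}+\lambda)^{-1}$ on $S_{\theta}$ together with the finer control needed to bound the Dunford integral, the whole construction being carried out within a parameter-dependent class of Mellin and Green operators. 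The closedness of $\underline{\Delta}_{z,s}$ with domain $\mathcal{H}^{s+2,\gamma+2}_{p}(\mathbb{B})\oplus\mathbb{C}_{\omega}$ is already ensured by \eqref{dmax1} once $\gamma$ is taken as in \eqref{gamma2} (with $\lambda_{1}$ replaced by $\lambda_{z,1}$): that condition makes the conormal symbol of $\Delta_{z}$ invertible on the line $\mathrm{Re}=\frac{\nu}{2}-2-\gamma$ in the Mellin variable, so that $\mathcal{H}^{s+2,\gamma+2}_{p}(\mathbb{B})$ is the minimal domain, and at the same time places $\mathbb{C}_{\omega}$ — the span of the constant ($x^{0}$) asymptotics attached to the indicial root $0$ of $\Delta_{z}$ — inside the maximal domain, transversal to the minimal one.

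Next I would recast $c-\Delta_{z}+\lambda$, for $\lambda\in S_{\theta}$, as a parameter-elliptic element of Schulze's cone calculus with parameter: its parameter-dependent interior principal symbol and its rescaled symbol near the tip are invertible since $-\Delta_{z}$ has nonnegative principal symbol, and the conormal symbol, unchanged by the order-zero term $c+\lambda$, is invertible on the same weight line as above. Then I would construct a parameter-dependent parametrix $B(\lambda)$ by gluing, via nested cut-off functions near the tip, a Mellin parametrix assembled from the inverse conormal symbol and interior inversion to a parameter-dependent parametrix of the resulting elliptic operator on a closed manifold away from the tip. This yields $(c-\Delta_{z}+\lambda)B(\lambda)=I-G(\lambda)$ with a Green remainder $G(\lambda)$ that decays in $\lambda$, also after composition into the domain, so that a Neumann series inverts $I-G(\lambda)$ for all $\lambda\in S_{\theta}$ and produces both the resolvent $(c-\underline{\Delta}_{z,s}+\lambda)^{-1}=B(\lambda)(I-G(\lambda))^{-1}$ and the sectorial bound $\|(c-\underline{\Delta}_{z,s}+\lambda)^{-1}\|_{\mathcal{L}(\mathcal{H}^{s,\gamma}_{p}(\mathbb{B}))}\leq K(1+|\lambda|)^{-1}$.

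To upgrade this to a bounded $H^{\infty}$-calculus I would insert the resolvent into $f(-(c-\underline{\Delta}_{z,s}))=\frac{1}{2\pi i}\int_{\Gamma_{\theta}}f(\lambda)(c-\underline{\Delta}_{z,s}+\lambda)^{-1}\,d\lambda$ for $f\in H_{0}^{\infty}(\phi)$ with $\phi\in(0,\theta)$, and split the integrand according to the parametrix identity. The model-cone part of $B(\lambda)$ is treated by dilation invariance: under the Mellin transform it becomes an operator-valued multiplier built from the conormal symbol, and combining the holomorphic functional calculus of the nonnegative operator $\Delta_{\mathfrak{h}(z,0)}$ on $\partial\mathbb{B}$ with the uniform $\lambda$-estimates for the inverse conormal symbol bounds its contribution by $C\sup_{\lambda}|f(\lambda)|$; the interior part falls under the classical bounded $H^{\infty}$-calculus for elliptic operators on closed manifolds; and the Green and Neumann-remainder terms are handled through their extra $\lambda$-decay together with the stability of the smoothing Mellin and Green classes under integration against bounded holomorphic symbols. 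A standard density and uniform-boundedness argument then passes from $H_{0}^{\infty}(\phi)$ to $H^{\infty}(\phi)$, and since $\phi\in(0,\theta)$ with $\theta\in(0,\pi)$ arbitrary was admitted throughout, one obtains $c-\underline{\Delta}_{z,s}\in\mathcal{H}^{\infty}(\theta)$ for every $\theta\in[0,\pi)$.

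The main obstacle is this last stage: resolvent bounds alone never give an $H^{\infty}$-calculus, so one genuinely needs the parametrix to live in an operator class that is stable under the Dunford integration with $\|f\|_{\infty}$-bounds — in particular the delicate uniform-in-$\lambda$ Mellin symbol estimates near the cone tip, and the proof that the Green remainders form an ideal with the required decay. These estimates, rather than the earlier bookkeeping of parameter-ellipticity and Neumann inversion, constitute the technical core of \cite{SS}.
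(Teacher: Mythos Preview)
Your outline is a reasonable high-level sketch of how \cite[Theorem 6.7]{SS} itself is proved, but that is not what the paper does here. The paper's proof is essentially one sentence: it invokes \cite[Theorem 6.7]{SS} as a black box, which gives $c-\underline{\Delta}_{z,s}\in\mathcal{H}^{\infty}(\theta)$ in the space $\mathcal{H}_{p}^{s,\gamma}(\mathbb{B}(z))$, and then transfers the conclusion to $\mathcal{H}_{p}^{s,\gamma}(\mathbb{B})$ by observing that these two Mellin--Sobolev spaces coincide with equivalent norms (directly from the definition for $s\in\mathbb{N}_{0}$, and by interpolation via \cite[Lemma 3.7]{RS2} for general $s\geq0$). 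So you have reproduced the hard analysis that the paper deliberately outsources, while omitting the one point the paper actually argues.

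That omission is not cosmetic. In the paper's setup $\Delta_{z}$ is the Laplacian for the $z$-dependent metric $\mathfrak{b}(z)$ on $\mathcal{B}$, whereas the ground space $\mathcal{H}_{p}^{s,\gamma}(\mathbb{B})$ is built from the \emph{fixed} metric $\mathfrak{p}$; the Schrohe--Seiler theorem is stated for the Laplacian acting on the Mellin--Sobolev space associated with its own metric. Your sketch never addresses this mismatch, so as written it does not quite prove the stated theorem in the paper's function-space setting. The fix is exactly the norm-equivalence step the paper supplies.
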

\begin{proof}
By \cite[Theorem 6.7]{SS}, for each $c>0$ and $\theta\in[0,\pi)$ we have $c-\underline{\Delta}_{\mathfrak{g},z,s}\in\mathcal{H}^{\infty}(\theta)$ in $\mathcal{H}_{p}^{s,\gamma}((\mathcal{B},\mathfrak{g}))$. Then, the result follows by the equivalence of the norms $\|\cdot\|_{\mathcal{H}_{p}^{s,\gamma}((\mathcal{B},\mathfrak{g}))}$ and $\|\cdot\|_{\mathcal{H}_{p}^{s,\gamma}(\mathbb{B})}$: when $s\in\mathbb{N}_{0}$ this equivalence follows by the definition of Mellin-Sobolev spaces and when $s\geq0$ it can be seen by interpolation, see e.g. \cite[Lemma 3.7]{RS2}. 
\end{proof}

We conclude the study of the cone Laplacian with the following boundedness of the family $\mathbb{V}\ni z \mapsto \lambda_{z,1}\in(-\infty,0)$.

\begin{lemma}\label{lambda1bound}
Let $\Delta_{\mathfrak{h}(z,0)}$ be the Laplacian on $\partial\mathcal{B}$ induced by $\mathfrak{h}(z,0)$. If $\lambda_{z,1}<0$ is the greatest nonzero eigenvalue of $\Delta_{\mathfrak{h}(z,0)}$, then \eqref{lambda1} holds true.
\end{lemma}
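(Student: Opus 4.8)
The plan is to identify $-\lambda_{z,1}$ with the first \emph{positive} eigenvalue of the non-negative Laplacian on the fixed closed manifold $\partial\mathcal{B}=\mathcal{Y}$, to bound it below uniformly in $z$ by comparing the metric $\mathfrak{h}(z,0)$ with a fixed reference metric, and to conclude by compactness of $\mathcal{V}$.

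First I would set $m=\dim\mathcal{Y}=\nu-1$ and let $k$ be the number of connected components of $\mathcal{Y}$ (finite and independent of $z$). For each $z\in\mathcal{V}$ the operator $-\Delta_{\mathfrak{h}(z,0)}$ is non-negative and self-adjoint with discrete spectrum $0=\mu_{0}(z)=\dots=\mu_{k-1}(z)<\mu_{k}(z)\leq\mu_{k+1}(z)\leq\dots$ (repeated according to multiplicity, the kernel being the locally constant functions), so that $\lambda_{z,1}=-\mu_{k}(z)<0$ and it suffices to show $\inf_{z\in\mathcal{V}}\mu_{k}(z)>0$. For this I would invoke the Courant--Fischer min--max characterization
$$
\mu_{k}(z)=\min_{\substack{W\subseteq H^{1}(\mathcal{Y})\\ \dim W=k+1}}\ \max_{0\neq u\in W}\ \frac{\int_{\mathcal{Y}}|du|^{2}_{\mathfrak{h}(z,0)}\,dV_{\mathfrak{h}(z,0)}}{\int_{\mathcal{Y}}|u|^{2}\,dV_{\mathfrak{h}(z,0)}},
$$
which is valid whether or not $\mathcal{Y}$ is connected.

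Next, fix any reference Riemannian metric $\mathfrak{h}_{\ast}$ on $\mathcal{Y}$, e.g. $\mathfrak{h}_{\ast}=\mathfrak{h}(z_{0},0)$ for some $z_{0}\in\mathcal{V}$. Since $\mathcal{V}$ is compact and $z\mapsto\mathfrak{h}(z,0)$ is continuous (in fact smooth) into the positive-definite symmetric $2$-tensors on the compact manifold $\mathcal{Y}$, there is a constant $C\geq1$ with $C^{-1}\mathfrak{h}_{\ast}\leq\mathfrak{h}(z,0)\leq C\mathfrak{h}_{\ast}$ as bilinear forms on $T\mathcal{Y}$, uniformly in $z\in\mathcal{V}$ and over $\mathcal{Y}$. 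This comparison passes to the dual metrics (same constant $C$) and to the volume densities (constant $C^{m/2}$), so the Rayleigh quotient in the displayed formula for $\mathfrak{h}(z,0)$ is pinched between $C^{-(m+1)}$ and $C^{m+1}$ times the corresponding quotient for $\mathfrak{h}_{\ast}$; feeding this into the min--max gives $\mu_{k}(z)\geq C^{-(m+1)}\mu_{k}(\mathfrak{h}_{\ast})$ for all $z\in\mathcal{V}$. Since $\mathfrak{h}_{\ast}$ is a fixed metric on the fixed closed manifold $\mathcal{Y}$ with $k$ components, $\mu_{k}(\mathfrak{h}_{\ast})>0$, whence $\inf_{z\in\mathcal{V}}\mu_{k}(z)>0$ and therefore $\lambda_{1}=\sup_{z\in\mathcal{V}}\lambda_{z,1}=-\inf_{z\in\mathcal{V}}\mu_{k}(z)<0$, which is \eqref{lambda1}.

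An equivalent route for the last two steps is to observe that the same local comparison estimate (or analytic perturbation theory applied to the smooth family $z\mapsto\Delta_{\mathfrak{h}(z,0)}$) makes $z\mapsto\mu_{k}(z)$ continuous, so that, being everywhere positive, it attains a positive minimum on the compact set $\mathcal{V}$. I do not expect any serious obstacle; the only point requiring a little attention is the bookkeeping caused by a possibly disconnected $\mathcal{Y}$, namely that the relevant quantity is the $(k+1)$-st eigenvalue of $-\Delta_{\mathfrak{h}(z,0)}$ counted with multiplicity rather than the second one.
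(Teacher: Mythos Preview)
Your argument is correct and takes a genuinely different route from the paper. The paper argues by contradiction: assuming $\lambda_{z_{k},1}\to0$ along some sequence, it passes to a convergent subsequence $z_{k_{j}}\to z$, shows that the resolvents $(\Delta_{\mathfrak{h}(z_{k_{j}},0)}-1)^{-1}$ converge in $\mathcal{L}(L^{2}(\partial\mathbb{B}))$ to $(\Delta_{\mathfrak{h}(z,0)}-1)^{-1}$, and then applies the min--max characterization to the second eigenvalue of the \emph{resolvent} to derive a contradiction with $\lambda_{z,1}<0$. Your approach is more direct and more geometric: you bypass resolvent convergence entirely by exploiting the uniform equivalence $C^{-1}\mathfrak{h}_{\ast}\leq\mathfrak{h}(z,0)\leq C\mathfrak{h}_{\ast}$ (available by compactness of $\mathcal{V}\times\mathcal{Y}$) to compare Rayleigh quotients pointwise, obtaining the explicit quantitative bound $-\lambda_{z,1}\geq C^{-(m+1)}(-\lambda_{z_{0},1})$. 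This buys you an effective estimate in terms of the metric distortion constant, and is arguably more elementary; the paper's route, while less explicit, stays within the operator-theoretic framework used throughout and would generalise more readily to situations where a direct quadratic-form comparison is unavailable. Your handling of the possibly disconnected $\mathcal{Y}$ via the multiplicity-$k$ kernel is the right bookkeeping.
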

\begin{proof}
Let $\{z_{k}\}_{k\in\mathbb{N}}$ be a sequence in $\mathbb{V}$ such that $\lambda_{z_{k},1}\rightarrow 0$ as $k\rightarrow\infty$. Let $\{z_{k_{j}}\}_{j\in\mathbb{N}}$ be a subsequence converging to some $z\in \mathbb{V}$. For any $\varepsilon>0$ there exists an $m\in\mathbb{N}$ such that $j\geq m$ implies 
$$
\|\Delta_{\mathfrak{h}(z_{k_{j}},0)}-\Delta_{\mathfrak{h}(z,0)}\|_{\mathcal{L}(H_{2}^{2}(\partial\mathbb{B}),H_{2}^{0}(\partial\mathbb{B}))}<\varepsilon.
$$
Denote $H=L^{2}(\partial\mathbb{B})$. By choosing $\varepsilon=1/(2\|(\Delta_{\mathfrak{h}(z,0)}-1)^{-1}\|_{\mathcal{L}(H)})$ and using the identity
$$
\Delta_{\mathfrak{h}(z_{k_{j}},0)}-I=(I+(\Delta_{\mathfrak{h}(z_{k_{j}},0)}-\Delta_{\mathfrak{h}(z,0)})(\Delta_{\mathfrak{h}(z,0)}-1)^{-1})(\Delta_{\mathfrak{h}(z,0)}-I),
$$
we deduce that there exists an $m_{0}\in\mathbb{N}$ such that $(\Delta_{\mathfrak{h}(z_{k_{j}},0)}-1)^{-1}$ exists in $\mathcal{L}(H)$ for all $j\geq m_{0}$ and satisfies 
$$
\|(\Delta_{\mathfrak{h}(z_{k_{j}},0)}-1)^{-1}\|_{\mathcal{L}(H)}\leq 2 \|(\Delta_{\mathfrak{h}(z,0)}-1)^{-1}\|_{\mathcal{L}(H)}.
$$
Hence, by the identity
$$
(\Delta_{\mathfrak{h}(z_{k_{j}},0)}-1)^{-1}-(\Delta_{\mathfrak{h}(z,0)}-1)^{-1}=(\Delta_{\mathfrak{h}(z_{k_{j}},0)}-1)^{-1}(\Delta_{\mathfrak{h}(z,0)}-\Delta_{\mathfrak{h}(z_{k_{j}},0)})(\Delta_{\mathfrak{h}(z,0)}-1)^{-1}
$$
we find that
$$
\lim_{j\rightarrow\infty}\|(\Delta_{\mathfrak{h}(z_{k_{j}},0)}-1)^{-1}-(\Delta_{\mathfrak{h}(z,0)}-1)^{-1}\|_{\mathcal{L}(H)}=0.
$$
In particular, by Cauchy-Schwarz inequality, for any $\varepsilon>0$ there exists an $m_{1}\in \mathbb{N}$ such that $j\geq m_{1}$ implies
\begin{equation}\label{diffest}
|\langle v, ((\Delta_{\mathfrak{h}(z_{k_{j}},0)}-1)^{-1}-(\Delta_{\mathfrak{h}(z,0)}-1)^{-1})v \rangle_{H}|\leq \varepsilon\|v\|_{H}^{2},
\end{equation}
for any $v\in H$, where $\langle \cdot,\cdot\rangle_{H}$ stands for the inner product in $H$. Note that for each $w\in\mathbb{V}$ the operator $(\Delta_{\mathfrak{h}(w,0)}-1)^{-1}$ is bounded self-adjoint and bounded from below by $-1$. Moreover, by the spectral mapping theorem for self-adjoint operators, if $\sigma(\Delta_{\mathfrak{h}(w,0)})=\{\dots,\lambda_{w,1},\lambda_{w,0}=0\}$ then $\sigma((\Delta_{\mathfrak{h}(w,0)}-1)^{-1})=\{-1,(\lambda_{w,1}-1)^{-1},\dots\}$. Furthermore, by the min-max principle for self-adjoint operators, see e.g. \cite[Theorem XIII.1]{ReSi}, we have 
$$
(\lambda_{w,1}-1)^{-1}=\sup_{u\in H\backslash\{0\}} \, \inf_{v\in H,\, \|v\|_{H}=1, \, \langle v,u\rangle_{H}=0}\langle v,(\Delta_{\mathfrak{h}(w,0)}-1)^{-1}v\rangle_{H}.
$$
Hence, if we consider the difference $(\lambda_{z_{k_{j}},1}-1)^{-1}-(\lambda_{z,1}-1)^{-1}$ and take into account \eqref{diffest}, we get a contradiction. 
\end{proof}

We continue with the study of function spaces on manifolds with edges and related function spaces. We start with some geometric properties of Mellin-Sobolev spaces and of the Mellin-Sobolev spaces valued Bessel potential spaces. 

\begin{remark}\label{UMDalpha}
{\em Let $p,q\in (1,\infty)$ and $s,\gamma,\eta\in\mathbb{R}$. We have the following isomorphisms 
$$
\mathcal{H}^{s,\gamma}_{p}(\mathbb{B})\cong\mathcal{H}^{0,\gamma}_{p}(\mathbb{B}) \cong L^{p}(\mathbb{B},d\mu),
$$
where $d\mu$ is the measure generated by the conical metric (the first isomorphism can be constructed, for example, using the cone calculus). Hence, $\mathcal{H}^{s,\gamma}_{p}(\mathbb{B})$ is UMD and has property $(\alpha)$ due to \cite[Proposition 4.2.15]{HNVW} and \cite[Proposition 7.5.3]{HNVW5}. Furthermore, the Bessel potential operators provide isomorphisms of $H_{q}^{\eta}(\mathbb{R}^{n};\mathcal{H}_{p}^{s,\gamma}(\mathbb{B}))$ and $L^{q}(\mathbb{R}^{n};\mathcal{H}_{p}^{s,\gamma}(\mathbb{B}))$, so that $H_{q}^{\eta}(\mathbb{R}^{n};\mathcal{H}_{p}^{s,\gamma}(\mathbb{B}))$ is UMD and has property $(\alpha)$ by \cite[Proposition 4.2.15]{HNVW} and \cite[Proposition 7.5.3]{HNVW2}. In particular, $\mathcal{H}^{s,\gamma}_{p}(\mathbb{B})$ and $H_{q}^{\eta}(\mathbb{R}^{n};\mathcal{H}_{p}^{s,\gamma}(\mathbb{B}))$ are {\em $\nu$-admissible} in the sense of \cite[p. 173]{Am2}. Similarly, both spaces $\mathcal{H}^{s,\gamma}_{p}(\mathbb{B})\oplus\mathbb{C}_{\omega}$, $H_{q}^{\eta}(\mathbb{R}^{n};\mathcal{H}_{p}^{s,\gamma}(\mathbb{B})\oplus\mathbb{C}_{\omega})$ are UMD, have the property $(\alpha)$ and are also $\nu$-admissible.} 
\end{remark}

Next we show some interpolation properties of our vector valued Bessel potential spaces. 

\begin{lemma}\label{Hprop}
Let $\rho\in(0,1)$ and for any $a_{0},a_{1}\in\mathbb{R}$ denote $a_{\rho}=(1-\rho)a_{0}+\rho a_{1}$. Let $p,q\in(1,\infty)$, $r,r_{0},r_{1},s\geq0$ and $s_{0},s_{1},\gamma, \gamma_{0}, \gamma_{1}\in\mathbb{R}$ satisfying $r_{0}\neq r_{1}$, $0\leq s_{0}\leq s_{1}$ and $\gamma_{0}\leq \gamma_{1}$. \\
{\bf(i)} For each $k\in\mathbb{N}_{0}$, up to equivalence of norms, we have
$$
H_{q}^{k}(\mathbb{R}^{n};\mathcal{H}_{p}^{s,\gamma}(\mathbb{B}))=W^{q,k}(\mathbb{R}^{n};\mathcal{H}_{p}^{s,\gamma}(\mathbb{B})).
$$
In particular 
$$
H_{q}^{0}(\mathbb{R}^{n};\mathcal{H}_{p}^{s,\gamma}(\mathbb{B}))=L^{q}(\mathbb{R}^{n};\mathcal{H}_{p}^{s,\gamma}(\mathbb{B})).
$$
{\bf(ii)} Up to equivalence of norms, we have
$$
[H_{q}^{r_{0}}(\mathbb{R}^{n};\mathcal{H}_{p}^{s_{0},\gamma_{0}}(\mathbb{B})),H_{q}^{r_{1}}(\mathbb{R}^{n};\mathcal{H}_{p}^{s_{1},\gamma_{1}}(\mathbb{B}))]_{\rho}=H_{q}^{r_{\rho}}(\mathbb{R}^{n};\mathcal{H}_{p}^{s_{\rho},\gamma_{\rho}}(\mathbb{B})).
$$
{\bf(iii)} If $\gamma_{0}<\nu/2<\gamma_{1}$, then for any $\varepsilon>0$ we have
\begin{eqnarray*}
\lefteqn{H_{q}^{r_{\rho}}(\mathbb{R}^{n};\mathcal{H}_{p}^{s_{\rho},\gamma_{\rho}}(\mathbb{B})\oplus \mathbb{C}_{\omega})}\\
&&\hspace{-10pt}\hookrightarrow [H_{q}^{r_{0}}(\mathbb{R}^{n};\mathcal{H}_{p}^{s_{0},\gamma_{0}}(\mathbb{B})),H_{q}^{r_{1}}(\mathbb{R}^{n};\mathcal{H}_{p}^{s_{1},\gamma_{1}}(\mathbb{B})\oplus \mathbb{C}_{\omega})]_{\rho}\hookrightarrow H_{q}^{r_{\rho}}(\mathbb{R}^{n};\mathcal{H}_{p}^{s_{\rho},\gamma_{\rho}-\varepsilon}(\mathbb{B})\oplus \mathbb{C}_{\omega}).
\end{eqnarray*}
If $\nu/2<\gamma_{0}\leq\gamma_{1}$, then
$$
 [H_{q}^{r_{0}}(\mathbb{R}^{n};\mathcal{H}_{p}^{s_{0},\gamma_{0}}(\mathbb{B})\oplus\mathbb{C}_{\omega}),H_{q}^{r_{1}}(\mathbb{R}^{n};\mathcal{H}_{p}^{s_{1},\gamma_{1}}(\mathbb{B})\oplus \mathbb{C}_{\omega})]_{\rho}=H_{q}^{r_{\rho}}(\mathbb{R}^{n};\mathcal{H}_{p}^{s_{\rho},\gamma_{\rho}}(\mathbb{B})\oplus \mathbb{C}_{\omega}).
$$
\end{lemma}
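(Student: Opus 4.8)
The strategy is to reduce all three statements to their ``cone-level'' analogues, i.e. to the corresponding statements for $\mathcal{H}_p^{s,\gamma}(\mathbb{B})$ and $\mathcal{H}_p^{s,\gamma}(\mathbb{B})\oplus\mathbb{C}_\omega$ alone, and then to transfer them to $H_q^r(\mathbb{R}^n;\cdot)$ by means of the vector-valued complex interpolation theorem \cite[Theorem 5.6.9]{HNVW}; the latter applies since every fibre space appearing below is UMD by Remark \ref{UMDalpha}.

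Part {\bf (i)} is the standard identification $H_q^k(\mathbb{R}^n;Y)=W^{k,q}(\mathbb{R}^n;Y)$, $k\in\mathbb{N}_0$, over a UMD target: since $\mathcal{H}_p^{s,\gamma}(\mathbb{B})$ is UMD by Remark \ref{UMDalpha}(i), it follows from \cite[Theorem 5.6.11]{HNVW}, the case $k=0$ being the definition $H_q^0=L^q$. For part {\bf (ii)} I would first recall the complex interpolation identity for Mellin--Sobolev spaces, $[\mathcal{H}_p^{s_0,\gamma_0}(\mathbb{B}),\mathcal{H}_p^{s_1,\gamma_1}(\mathbb{B})]_\rho=\mathcal{H}_p^{s_\rho,\gamma_\rho}(\mathbb{B})$, which is \cite[Lemma 3.7]{RS2} and is obtained by reducing, via the weight-shift isomorphisms, to the case $\gamma_0=\gamma_1$ and then to interpolation of ordinary Bessel potential spaces on $\mathbb{R}^\nu$ and of Sobolev spaces on the closed manifold away from the edge. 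Applying \cite[Theorem 5.6.9]{HNVW} to the UMD couple $(\mathcal{H}_p^{s_0,\gamma_0}(\mathbb{B}),\mathcal{H}_p^{s_1,\gamma_1}(\mathbb{B}))$ passes the interpolation through $H_q^{r_\bullet}(\mathbb{R}^n;\cdot)$, and composing with the previous identity gives the claim.

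For part {\bf (iii)} the decisive elementary point is that, by \eqref{MellSobint}, $\omega\in\mathcal{H}_p^{s,\gamma}(\mathbb{B})$ exactly when $\gamma<\nu/2$; consequently $\mathcal{H}_p^{s,\gamma}(\mathbb{B})\oplus\mathbb{C}_\omega=\mathcal{H}_p^{s,\gamma}(\mathbb{B})$ if $\gamma<\nu/2$, whereas if $\gamma\geq\nu/2$ the sum is a genuine topological direct sum inside the distributions on $\mathbb{B}^\circ$, with $\mathcal{H}_p^{s,\gamma}(\mathbb{B})\cap\mathbb{C}_\omega=\{0\}$ and $\mathbb{C}_\omega$ a complemented finite-dimensional subspace. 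In the second subcase $\nu/2<\gamma_0\leq\gamma_1$, both sums are genuine, $\mathcal{H}_p^{s_1,\gamma_1}(\mathbb{B})\hookrightarrow\mathcal{H}_p^{s_0,\gamma_0}(\mathbb{B})$ (as $s_0\leq s_1$, $\gamma_0\leq\gamma_1$), and $\mathbb{C}_\omega$ is a common summand meeting each $\mathcal{H}_p^{s_i,\gamma_i}(\mathbb{B})$ trivially; hence Lemma \ref{suminter} applies at the cone level and, together with \cite[Lemma 3.7]{RS2}, gives $[\mathcal{H}_p^{s_0,\gamma_0}(\mathbb{B})\oplus\mathbb{C}_\omega,\mathcal{H}_p^{s_1,\gamma_1}(\mathbb{B})\oplus\mathbb{C}_\omega]_\rho=\mathcal{H}_p^{s_\rho,\gamma_\rho}(\mathbb{B})\oplus\mathbb{C}_\omega$, which \cite[Theorem 5.6.9]{HNVW} then lifts to the desired identity on $\mathbb{R}^n$ using Remark \ref{UMDalpha}(iv). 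In the first subcase $\gamma_0<\nu/2<\gamma_1$, the left endpoint already absorbs $\mathbb{C}_\omega$; the lower embedding is soft, because $\mathcal{H}_p^{s_1,\gamma_1}(\mathbb{B})\hookrightarrow\mathcal{H}_p^{s_1,\gamma_1}(\mathbb{B})\oplus\mathbb{C}_\omega$ and $\mathbb{C}_\omega$ embeds into both endpoints, so functoriality of complex interpolation combined with {\bf (ii)} and the trivial identity $[H_q^{r_0}(\mathbb{R}^n;\mathbb{C}_\omega),H_q^{r_1}(\mathbb{R}^n;\mathbb{C}_\omega)]_\rho=H_q^{r_\rho}(\mathbb{R}^n;\mathbb{C}_\omega)$ shows that $H_q^{r_\rho}(\mathbb{R}^n;\mathcal{H}_p^{s_\rho,\gamma_\rho}(\mathbb{B}))$ and $H_q^{r_\rho}(\mathbb{R}^n;\mathbb{C}_\omega)$ both embed into the interpolation space, hence so does their sum.

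The main obstacle is the upper embedding in the first subcase, with its unavoidable arbitrarily small weight loss $\varepsilon$: here the couple is genuinely asymmetric, $\mathbb{C}_\omega$ being a true direct summand of $\mathcal{H}_p^{s_1,\gamma_1}(\mathbb{B})\oplus\mathbb{C}_\omega$ but contained in $\mathcal{H}_p^{s_0,\gamma_0}(\mathbb{B})$, so Lemma \ref{suminter} breaks down ($X\cap Z\neq\{0\}$) and a brute-force embedding of $\mathcal{H}_p^{s_1,\gamma_1}(\mathbb{B})\oplus\mathbb{C}_\omega$ into a single Mellin--Sobolev space loses too much weight to recover every $\varepsilon>0$. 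I would instead prove the cone-level embedding $[\mathcal{H}_p^{s_0,\gamma_0}(\mathbb{B}),\mathcal{H}_p^{s_1,\gamma_1}(\mathbb{B})\oplus\mathbb{C}_\omega]_\rho\hookrightarrow\mathcal{H}_p^{s_\rho,\gamma_\rho-\varepsilon}(\mathbb{B})\oplus\mathbb{C}_\omega$ by means of the cone-calculus description of interpolation of Mellin--Sobolev spaces with asymptotics as the weight passes the value $\nu/2$ --- concretely, via holomorphic families of weight-shift (order-reducing) operators of the cone calculus that move the pole carried by $\mathbb{C}_\omega$ off the critical line at the cost of an arbitrarily small loss of weight, the logarithmic terms that appear generically still being absorbed by $\mathcal{H}_p^{s_\rho,\gamma_\rho-\varepsilon}(\mathbb{B})$ --- or, more economically, invoke the corresponding cone-level statement directly from \cite{SS3} (cf. \cite{RS2}). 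One then lifts to $\mathbb{R}^n$ a last time by \cite[Theorem 5.6.9]{HNVW} applied to the couple $(\mathcal{H}_p^{s_0,\gamma_0}(\mathbb{B}),\mathcal{H}_p^{s_1,\gamma_1}(\mathbb{B})\oplus\mathbb{C}_\omega)$, composing with the inclusion.
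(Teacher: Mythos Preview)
Your treatment of parts {\bf(i)}, {\bf(ii)}, the second subcase of {\bf(iii)}, and the lower embedding in the first subcase of {\bf(iii)} is essentially the paper's own. One small remark: for {\bf(ii)} and {\bf(iii)} the paper invokes \cite[Theorem VII.4.5.5]{Am2} rather than \cite[Theorem 5.6.9]{HNVW}, because what is needed is the identity $[H_q^{r_0}(\mathbb{R}^n;X_0),H_q^{r_1}(\mathbb{R}^n;X_1)]_\rho=H_q^{r_\rho}(\mathbb{R}^n;[X_0,X_1]_\rho)$ with \emph{varying} target spaces; this is Amann's result for $\nu$-admissible spaces (cf.\ Remark~\ref{UMDalpha}(ii),(iv)). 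The cone-level interpolation is cited from \cite[Lemma 3.3 (iii)]{LR} rather than \cite[Lemma 3.7]{RS2}, but these are interchangeable.

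The genuine divergence is at the upper embedding in the first subcase of {\bf(iii)}, which you correctly flag as the crux. Your proposal --- holomorphic weight-shift/order-reducing families, or an unspecified appeal to \cite{SS3}, \cite{RS2} --- is not a proof: those references contain no ready-made statement for this asymmetric couple, and the ``move the pole off the critical line'' heuristic does not by itself control what survives at the interpolated weight $\gamma_\rho$. The paper's argument is concrete and quite different. After reducing by \cite[Theorem VII.4.5.5]{Am2} to the cone level and assuming straight conical tips (so that \eqref{Espace} applies), one observes that $\Delta_z$ maps both $\mathcal{H}_p^{s_0,\gamma_0}(\mathbb{B})$ and $\mathcal{H}_p^{s_1,\gamma_1}(\mathbb{B})\oplus\mathbb{C}_\omega$ boundedly into $\mathcal{H}_p^{s_i-2,\gamma_i-2}(\mathbb{B})$; by interpolation, $\Delta_z$ therefore maps $[\mathcal{H}_p^{s_0,\gamma_0}(\mathbb{B}),\mathcal{H}_p^{s_1,\gamma_1}(\mathbb{B})\oplus\mathbb{C}_\omega]_\rho$ into $\mathcal{H}_p^{s_\rho-2,\gamma_\rho-2}(\mathbb{B})$. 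Hence the interpolation space embeds into the \emph{maximal domain} of $\Delta_z$ in $\mathcal{H}_p^{s_\rho-2,\gamma_\rho-2}(\mathbb{B})$, which by \eqref{dmax1}--\eqref{Espace} equals $\mathcal{H}_p^{s_\rho,\gamma_\rho-\varepsilon}(\mathbb{B})\oplus\mathcal{E}_{\Delta_z,\gamma_\rho}$. To rule out every asymptotic component $\mathcal{E}_{\Delta_z,\gamma_\rho,q_{z,j}^\pm}$ with $q_{z,j}^\pm\neq 0$, the paper applies $\omega(x)x\partial_x$: this operator annihilates $\mathbb{C}_\omega$, so it maps both endpoint spaces into $\mathcal{H}_p^{s_i-1,\gamma_i}(\mathbb{B})$ and hence, by interpolation, maps the interpolation space into $\mathcal{H}_p^{s_\rho-1,\gamma_\rho}(\mathbb{B})$; but it does \emph{not} map any nontrivial $\mathcal{E}_{\Delta_z,\gamma_\rho,q_{z,j}^\pm}$ with $q_{z,j}^\pm\neq 0$ there. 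This contradiction leaves only the $\mathbb{C}_\omega$ summand, yielding the claimed embedding with the $\varepsilon$-loss. This maximal-domain argument, together with the $\omega(x)x\partial_x$ test to exclude spurious asymptotics, is the idea missing from your sketch.
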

\begin{proof}
{\bf(i)} Follows by Remark \ref{UMDalpha} and \cite[Theorem 5.6.11]{HNVW}. \\
{\bf(ii)} Follows by Remark \ref{UMDalpha}, \cite[Theorem VII.4.5.5]{Am2} and \cite[Lemma 3.3 (iii)]{LR}.\\
{\bf(iii)} By Remark \ref{UMDalpha} and \cite[Theorem VII.4.5.5]{Am2} we have
\begin{eqnarray*}
[H_{q}^{r_{0}}(\mathbb{R}^{n};\mathcal{H}_{p}^{s_{0},\gamma_{0}}(\mathbb{B})),H_{q}^{r_{1}}(\mathbb{R}^{n};\mathcal{H}_{p}^{s_{1},\gamma_{1}}(\mathbb{B})\oplus \mathbb{C}_{\omega})]_{\rho}\\
&&\hspace{-60pt} =\,\, H_{q}^{r_{\rho}}(\mathbb{R}^{n};[\mathcal{H}_{p}^{s_{0},\gamma_{0}}(\mathbb{B}),\mathcal{H}_{p}^{s_{1},\gamma_{1}}(\mathbb{B})\oplus \mathbb{C}_{\omega}]_{\rho}).
\end{eqnarray*}
Hence, for the first case it suffices to show that 
\begin{equation}\label{suffembint}
\mathcal{H}_{p}^{s_{\rho},\gamma_{\rho}}(\mathbb{B})\oplus \mathbb{C}_{\omega} \hookrightarrow[\mathcal{H}_{p}^{s_{0},\gamma_{0}}(\mathbb{B}),\mathcal{H}_{p}^{s_{1},\gamma_{1}}(\mathbb{B})\oplus \mathbb{C}_{\omega}]_{\rho}\hookrightarrow \mathcal{H}_{p}^{s_{\rho},\gamma_{\rho}-\varepsilon}(\mathbb{B})\oplus \mathbb{C}_{\omega},
\end{equation}
for each $\varepsilon>0$. Due to \cite[Lemma 3.3 (iii)]{LR}, we have
$$
\mathcal{H}_{p}^{s_{\rho},\gamma_{\rho}}(\mathbb{B})=[\mathcal{H}_{p}^{s_{0},\gamma_{0}}(\mathbb{B}),\mathcal{H}_{p}^{s_{1},\gamma_{1}}(\mathbb{B})]_{\rho}\hookrightarrow [\mathcal{H}_{p}^{s_{0},\gamma_{0}}(\mathbb{B}),\mathcal{H}_{p}^{s_{1},\gamma_{1}}(\mathbb{B})\oplus \mathbb{C}_{\omega}]_{\rho}.
$$
Moreover
$$
\mathbb{C}_{\omega} \hookrightarrow \mathcal{H}_{p}^{s_{0},\gamma_{0}}(\mathbb{B})\cap \big(\mathcal{H}_{p}^{s_{1},\gamma_{1}}(\mathbb{B})\oplus\mathbb{C}_{\omega}\big)\hookrightarrow [\mathcal{H}_{p}^{s_{0},\gamma_{0}}(\mathbb{B}),\mathcal{H}_{p}^{s_{1},\gamma_{1}}(\mathbb{B})\oplus \mathbb{C}_{\omega}]_{\rho}.
$$
Hence, the first embedding in \eqref{suffembint} follows immediately. Concerning the second one, up to norm equivalence we can assume that $(\mathcal{B},\mathfrak{g})$ has straight conical tips, i.e. that $\mathfrak{h}(z,x)$ in \eqref{metricg} is constant in $x$ when $x$ is close to zero. Then, from \eqref{Deltaz} we see that $\Delta_{\mathfrak{g}}$ has constant in $x$ coefficients close to the boundary $\partial\mathcal{B}$. Moreover, it induces the following bounded maps
$$
\Delta_{\mathfrak{g}}:\mathcal{H}_{p}^{s_{0},\gamma_{0}}(\mathbb{B})\rightarrow \mathcal{H}_{p}^{s_{0}-2,\gamma_{0}-2}(\mathbb{B})
$$
and
$$
\Delta_{\mathfrak{g}}:\mathcal{H}_{p}^{s_{1},\gamma_{1}}(\mathbb{B})\oplus \mathbb{C}_{\omega}\rightarrow \mathcal{H}_{p}^{s_{1}-2,\gamma_{1}-2}(\mathbb{B}).
$$
Hence, by (complex) interpolation, $\Delta_{\mathfrak{g}}$ also induces a bounded map
$$
\Delta_{\mathfrak{g}}: [\mathcal{H}_{p}^{s_{0},\gamma_{0}}(\mathbb{B}),\mathcal{H}_{p}^{s_{1},\gamma_{1}}(\mathbb{B})\oplus \mathbb{C}_{\omega}]_{\rho}\rightarrow [\mathcal{H}_{p}^{s_{0}-2,\gamma_{0}-2}(\mathbb{B}),\mathcal{H}_{p}^{s_{1}-2,\gamma_{1}-2}(\mathbb{B})]_{\rho}=\mathcal{H}_{p}^{s_{\rho}-2,\gamma_{\rho}-2}(\mathbb{B}),
$$
where at the last step we used \cite[Lemma 3.3 (iii)]{LR}. We deduce that the space $[\mathcal{H}_{p}^{s_{0},\gamma_{0}}(\mathbb{B}),\mathcal{H}_{p}^{s_{1},\gamma_{1}}(\mathbb{B})\oplus \mathbb{C}_{\omega}]_{\rho}$ embeds to the maximal domain of $\Delta_{\mathfrak{g}}$ in $\mathcal{H}_{p}^{s_{\rho}-2,\gamma_{\rho}-2}(\mathbb{B})$, i.e. 
$$
[\mathcal{H}_{p}^{s_{0},\gamma_{0}}(\mathbb{B}),\mathcal{H}_{p}^{s_{1},\gamma_{1}}(\mathbb{B})\oplus \mathbb{C}_{\omega}]_{\rho}\hookrightarrow\mathcal{H}_{p}^{s_{\rho},\gamma_{\rho}-\varepsilon}(\mathbb{B})\oplus \mathcal{E}_{\Delta_{\mathfrak{g}},\gamma_{\rho}},
$$
for all $\varepsilon>0$. Due to \eqref{Espace} we have
$$
\mathcal{E}_{\Delta_{\mathfrak{g}},\gamma_{\rho}}=\bigoplus_{q_{z,j}^{\pm}\in I_{\gamma_{\rho}}} \mathcal{E}_{\Delta_{\mathfrak{g}},\gamma_{\rho},q_{z,j}^{\pm}}.
$$
Suppose an element $u$ of $ \mathcal{E}_{\Delta_{\mathfrak{g}},\gamma_{\rho},q_{z,j}^{\pm}}$, $q_{z,j}^{\pm}\neq0$, would belong to $[\mathcal{H}_{p}^{s_{0},\gamma_{0}}(\mathbb{B}),\mathcal{H}_{p}^{s_{1},\gamma_{1}}(\mathbb{B})\oplus \mathbb{C}_{\omega}]_{\rho}$. By (complex) interpolation, the operator $\omega(x)x\partial_{x}$ maps $[\mathcal{H}_{p}^{s_{0},\gamma_{0}}(\mathbb{B}),\mathcal{H}_{p}^{s_{1},\gamma_{1}}(\mathbb{B})\oplus \mathbb{C}_{\omega}]_{\rho}$ to
$$
[\mathcal{H}_{p}^{s_{0}-1,\gamma_{0}}(\mathbb{B}),\mathcal{H}_{p}^{s_{1}-1,\gamma_{1}}(\mathbb{B})]_{\rho}=\mathcal{H}_{p}^{s_{\rho}-1,\gamma_{\rho}}(\mathbb{B}).
$$
On the other hand, by direct calculation, $\omega(x)x\partial_{x}u$ does not belong to $\mathcal{H}_{p}^{s_{\rho}-1,\gamma_{\rho}}(\mathbb{B})$, resulting in a contradiction.

Concerning the second case, similarly, by Remark \ref{UMDalpha} and \cite[Theorem VII.4.5.5]{Am2} we have
\begin{eqnarray*}
 [H_{q}^{r_{0}}(\mathbb{R}^{n};\mathcal{H}_{p}^{s_{0},\gamma_{0}}(\mathbb{B})\oplus\mathbb{C}_{\omega}),H_{q}^{r_{1}}(\mathbb{R}^{n};\mathcal{H}_{p}^{s_{1},\gamma_{1}}(\mathbb{B})\oplus \mathbb{C}_{\omega})]_{\rho}\\
&&\hspace{-80pt} =H_{q}^{r_{\rho}}(\mathbb{R}^{n};[\mathcal{H}_{p}^{s_{0},\gamma_{0}}(\mathbb{B})\oplus\mathbb{C}_{\omega},\mathcal{H}_{p}^{s_{1},\gamma_{1}}(\mathbb{B})\oplus \mathbb{C}_{\omega})]_{\rho}).
\end{eqnarray*}
Hence, the result follows by Lemma \ref{suminter} and \cite[Lemma 3.3 (iii)]{LR}.
\end{proof}

\begin{corollary}\label{sharpmixder}
Let $p,q\in(1,\infty)$, $r,s\geq0$ and $\gamma\in \mathbb{R}$. Then, for any $\rho\in(0,1)$ and $\varepsilon>0$ we have
$$
H_{q}^{r+2}(\mathbb{R}^{n};\mathcal{H}_{p}^{s,\gamma}(\mathbb{B}))\cap H_{q}^{r}(\mathbb{R}^{n};\mathcal{H}_{p}^{s+2,\gamma+2}(\mathbb{B})\oplus\mathbb{C}_{\omega})\hookrightarrow H_{q}^{r+2(1-\rho)}(\mathbb{R}^{n};\mathcal{H}_{p}^{s+2\rho,\gamma+2\rho-\varepsilon}(\mathbb{B})\oplus\mathbb{C}_{\omega}).
$$
\end{corollary}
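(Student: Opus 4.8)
The plan is to deduce the embedding from the sharp mixed-derivative inclusion \eqref{mixtder2} of Theorem \ref{mxtderth}, applied with the fiber Banach couple
\[
X_{0}=\mathcal{H}_{p}^{s,\gamma}(\mathbb{B}),\qquad X_{1}=\mathcal{H}_{p}^{s+2,\gamma+2}(\mathbb{B})\oplus\mathbb{C}_{\omega},
\]
and with the perturbation operator $B=c-\underline{\Delta}_{z,s}$ for an arbitrary fixed $c>0$, followed by two interpolation identifications: one for the $\mathbb{R}^{n}$-regularity exponent, and one for the Mellin--Sobolev fiber index.

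First I would verify the structural hypotheses of Theorem \ref{mxtderth}. Since $\gamma$ satisfies \eqref{gamma2} (with $\lambda_{1}$ replaced by $\lambda_{z,1}$), in particular $\gamma<\nu/2$, whence $\mathbb{C}_{\omega}\hookrightarrow\mathcal{H}_{p}^{s,\gamma}(\mathbb{B})$ and therefore $X_{1}\hookrightarrow X_{0}$ continuously; the embedding is dense because $C_{c}^{\infty}(\mathbb{B}^{\circ})\subseteq X_{1}$ is dense in $X_{0}$. Both $X_{0}$ and $X_{1}$ are UMD by Remark \ref{UMDalpha}(i),(iv). Moreover, by Theorem \ref{SecLapOrigin}, for every $\theta\in[0,\pi)$ the operator $B=c-\underline{\Delta}_{z,s}$, viewed as a bounded map $X_{1}=\mathcal{D}(\underline{\Delta}_{z,s})\to X_{0}$, lies in $\mathcal{L}(X_{1},X_{0})\cap\mathcal{H}^{\infty}(\theta)$; fixing some $\theta\in(0,\pi)$ and using \eqref{sectinclutions} one gets $B\in\mathcal{R}(\theta)$ and $B\in\mathcal{BIP}(\varphi)$ for some $\varphi\geq0$. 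Hence all hypotheses of Theorem \ref{mxtderth}, including the extra bounded-imaginary-powers assumption, hold, and \eqref{mixtder2} gives, for the given $\rho\in(0,1)$,
\[
H_{p}^{r+2}(\mathbb{R}^{n};X_{0})\cap H_{p}^{r}(\mathbb{R}^{n};X_{1})\hookrightarrow\big[H_{p}^{r}(\mathbb{R}^{n};[X_{0},X_{1}]_{\rho}),H_{p}^{r+2}(\mathbb{R}^{n};[X_{0},X_{1}]_{\rho})\big]_{1-\rho}.
\]

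It then remains to post-process the right-hand side. By \cite[Theorem III.4.5.2 (vii)]{Am} and \cite[Theorem 5.6.9]{HNVW}, exactly as in the closing line of the proof of Theorem \ref{mxtderth}, this space embeds into $H_{p}^{r+2(1-\rho)}(\mathbb{R}^{n};[X_{0},X_{1}]_{\rho})$. Next, \eqref{gamma2} forces $\gamma<\nu/2$ and, since it also requires $\gamma>(\nu-4)/2$, likewise $\gamma+2>\nu/2$; thus the condition $\gamma_{0}<\nu/2<\gamma_{1}$ of \eqref{suffembint} is met with $(s_{0},\gamma_{0})=(s,\gamma)$ and $(s_{1},\gamma_{1})=(s+2,\gamma+2)$. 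Observing that $(1-\rho)s+\rho(s+2)=s+2\rho$ and $(1-\rho)\gamma+\rho(\gamma+2)=\gamma+2\rho$, the embedding \eqref{suffembint} yields $[X_{0},X_{1}]_{\rho}\hookrightarrow\mathcal{H}_{p}^{s+2\rho,\gamma+2\rho-\varepsilon}(\mathbb{B})\oplus\mathbb{C}_{\omega}$ for every $\varepsilon>0$, and monotonicity of $H_{p}^{r+2(1-\rho)}(\mathbb{R}^{n};\cdot)$ in the fiber then delivers the asserted inclusion. I expect the only non-formal ingredient to be this last fiber identification — the fact that interpolating against the asymptotics space $\mathbb{C}_{\omega}$ costs only an arbitrarily small amount of weight — but this is precisely \eqref{suffembint}, already established in the proof of Lemma \ref{Hprop}(iii) through the mapping properties of $\Delta_{z}$ on the degenerate collar, so no genuinely new estimate should be required.
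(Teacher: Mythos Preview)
Your proposal is correct and follows essentially the same route as the paper's proof: apply \eqref{mixtder2} from Theorem \ref{mxtderth} with $B=c-\underline{\Delta}_{z,s}$ (justified via Theorem \ref{SecLapOrigin} and \eqref{sectinclutions}), identify the resulting complex interpolation of vector-valued Bessel potential spaces with $H_{p}^{r+2(1-\rho)}(\mathbb{R}^{n};[X_{0},X_{1}]_{\rho})$, and then invoke \eqref{suffembint} for the fiber. The paper cites \cite[Theorem VII.4.5.5]{Am2} for the middle identification where you cite \cite[Theorem III.4.5.2 (vii)]{Am} and \cite[Theorem 5.6.9]{HNVW}, but these serve the same purpose and the argument is otherwise identical.
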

\begin{proof}
Follows from Proposition \ref{mxtderPr} and \eqref{suffembint}.
\end{proof}

If $X$ is a Banach space denote by $BUC(\mathbb{R}^{n};X)$ the space of bounded uniformly continuous functions $u:\mathbb{R}^{n}\rightarrow X$ and by $BUC^{k}(\mathbb{R}^{n};X)$, $k\in\mathbb{N}$, the subspace of the $k$-times continuously
differentiable functions $u$ with $\partial^{\alpha}u\in BUC(\mathbb{R}^{n};X)$, $|\alpha|\leq k$.

\begin{lemma}\label{ProbHR} Let $p,q\in(1,\infty)$ and $J\in\{\{0\},\mathbb{C}_{\omega}\}$.\\
{\bf (i)} Let $r_{1}\geq r_{0}\geq0$, $s_{1}\geq s_{0}\geq0$ and $\gamma_{1},\gamma_{0}\in\mathbb{R}$ such that $\gamma_{1}\geq\gamma_{0}$. Then, the following continuous embedding holds
$$
H_{q}^{r_{1}}(\mathbb{R}^{n};\mathcal{H}_{p}^{s_{1},\gamma_{1}}(\mathbb{B})\oplus J)\hookrightarrow H_{q}^{r_{0}}(\mathbb{R}^{n};\mathcal{H}_{p}^{s_{0},\gamma_{0}}(\mathbb{B})\oplus J).
$$
{\bf (ii)} Let $k\in\mathbb{N}_{0}$, $r>k+n/q$ and $s>\nu/p$. Then
\begin{equation}\label{subsetcont}
H_{q}^{r}(\mathbb{R}^{n};\mathcal{H}_{p}^{s,\gamma}(\mathbb{B})\oplus J)\hookrightarrow BUC^{k}(\mathbb{R}^{n};\mathcal{H}_{p}^{s,\gamma}(\mathbb{B})\oplus J) \subset BUC^{k}(\mathbb{R}^{n};C(\mathbb{B}^{\circ})).
\end{equation}
Moreover, there exists a constant $C>0$ depending only on $\nu$, $p$ and $s$, such that for each $u\in BUC^{k}(\mathbb{R}^{n};\mathcal{H}_{p}^{s,\gamma}(\mathbb{B})\oplus J)$, $k_{j}\in\mathbb{N}_{0}$, $j\in\{1,\dots,n\}$, with $k_{1}+\dots+k_{n}\leq k$, and $(z_{1}\dots,z_{n},x,y)=(z,x,y)\in\mathbb{R}^{n}\times[0,1)\times\partial\mathcal{B}$ we have
\begin{equation}\label{ineqtrace}
|(\partial_{z_{1}}^{k_{1}}\dots \partial_{z_{n}}^{k_{n}}u_{\mathcal{H}})(z,x,y)|\leq Cx^{\gamma-\frac{\nu}{2}}\|(\partial_{z_{1}}^{k_{1}}\dots \partial_{z_{n}}^{k_{n}}u_{\mathcal{H}})(z,\cdot,\cdot)\|_{\mathcal{H}_{p}^{s,\gamma}(\mathbb{B})},
\end{equation}
where $u(z)=u_{\mathcal{H}}(z)+u_{J}(z)$ with $u_{\mathcal{H}}(z)\in \mathcal{H}_{p}^{s,\gamma}(\mathbb{B})$ and $u_{J}(z)\in J$, $z\in\mathbb{R}^{n}$. If in particular $\gamma>\nu/2$, then
\begin{equation}\label{contemb}
 BUC^{k}(\mathbb{R}^{n};\mathcal{H}_{p}^{s,\gamma}(\mathbb{B})\oplus J)\hookrightarrow BUC^{k}(\mathbb{R}^{n};C(\mathbb{B})).
\end{equation}
{\bf (iii)} If $k\in\mathbb{N}$, $k>n/q$, $s>\nu/p$ and $\gamma\geq\nu/2$, then, up to an equivalent norm, the space $H_{q}^{k}(\mathbb{R}^{n};\mathcal{H}_{p}^{s,\gamma}(\mathbb{B})\oplus J)$ is a Banach algebra.\\
{\bf (iv)} If $k\in\mathbb{N}$, $s>\nu/p$ and $\gamma\geq\nu/2$, then the space $BUC^{k}(\mathbb{R}^{n};\mathcal{H}_{p}^{s,\gamma}(\mathbb{B})\oplus J)$ is a Banach algebra up to an equivalent norm. In particular, the space $BUC^{k}(\mathbb{R}^{n};\mathcal{H}_{p}^{s,\gamma}(\mathbb{B})\oplus\mathbb{C}_{\omega})$ is also closed under holomorphic functional calculus. Furthermore, if $U$ is a bounded open subset of $BUC^{k}(\mathbb{R}^{n};\mathcal{H}_{p}^{s,\gamma}(\mathbb{B})\oplus\mathbb{C}_{\omega})$ consisting of functions $v$ such that $\mathrm{Re}(v)\geq\alpha>0$ for some fixed $\alpha$, then the subset $\{1/v\, |\, v\in U\}$ of $BUC^{k}(\mathbb{R}^{n};\mathcal{H}_{p}^{s,\gamma}(\mathbb{B})\oplus\mathbb{C}_{\omega})$ is also bounded; its bound can be estimated by the bound of $U$ and by the constant $\alpha$.\\
{\bf (v)} Let $k\in\mathbb{N}$, $r\in[0,k)$, $p_{0},p_{1}\in(1,\infty)$, $\sigma>0$, $s\in(-\sigma,\sigma)$ and $\gamma\in\mathbb{R}$. Then, there exists a $C_{0}>0$, depending only on $n$, $\nu$, $k$, $r$, $p_{0}$, $p_{1}$, $\sigma$, $s$ and $\gamma$, such that for each $u\in BUC^{k}(\mathbb{R}^{n};\mathcal{H}_{p_{1}}^{\sigma+\nu/p_{1},\nu/2}(\mathbb{B})\oplus J)$ and each $v\in H_{q}^{r}(\mathbb{R}^{n};\mathcal{H}_{p_{0}}^{s,\gamma}(\mathbb{B}))$ we have that $uv\in H_{q}^{r}(\mathbb{R}^{n};\mathcal{H}_{p_{0}}^{s,\gamma}(\mathbb{B}))$ and
$$
\|uv\|_{H_{q}^{r}(\mathbb{R}^{n};\mathcal{H}_{p_{0}}^{s,\gamma}(\mathbb{B}))}\leq C_{0} \|u\|_{BUC^{k}(\mathbb{R}^{n};\mathcal{H}_{p_{1}}^{\sigma+\frac{\nu}{p_{1}},\frac{\nu}{2}}(\mathbb{B})\oplus J)}\|v\|_{H_{q}^{r}(\mathbb{R}^{n};\mathcal{H}_{p_{0}}^{s,\gamma}(\mathbb{B}))}.
$$
\end{lemma}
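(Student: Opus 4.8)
The plan is to reduce everything to two layers of known facts: the Sobolev embedding and pointwise–multiplication theory for the \emph{scalar} Mellin--Sobolev spaces $\mathcal{H}_p^{s,\gamma}(\mathbb{B})$ together with their elementary monotonicity in $(s,\gamma)$, and the corresponding vector-valued statements for the Bochner spaces $H_q^r(\mathbb{R}^n;X)$ and $BUC^k(\mathbb{R}^n;X)$ (embedding, interpolation, Leibniz rule). For (i) I would first recall $\mathcal{H}_p^{s_1,\gamma_1}(\mathbb{B})\hookrightarrow\mathcal{H}_p^{s_0,\gamma_0}(\mathbb{B})$ for $s_1\ge s_0$, $\gamma_1\ge\gamma_0$ (immediate from Definition~\ref{MellSob} / \eqref{MellSobint}, and unchanged by adjoining the fixed space $J$), and then apply the monotonicity of vector-valued Bessel potential spaces in the differentiability index, $H_q^{r_1}(\mathbb{R}^n;Y)\hookrightarrow H_q^{r_0}(\mathbb{R}^n;Y)$ for $r_1\ge r_0$, to $Y=\mathcal{H}_p^{s_0,\gamma_0}(\mathbb{B})\oplus J$. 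For (ii), the embedding $H_q^r(\mathbb{R}^n;Y)\hookrightarrow BUC^k(\mathbb{R}^n;Y)$ for $r>k+n/q$ is the vector-valued Sobolev embedding, valid for any Banach space $Y$ (applied to each $\partial_z^\alpha$, $|\alpha|\le k$). The scalar inclusion $\mathcal{H}_p^{s,\gamma}(\mathbb{B})\hookrightarrow C(\mathbb{B}^\circ)$ together with the weighted pointwise bound $|w(x,y)|\le Cx^{\gamma-\nu/2}\|w\|_{\mathcal{H}_p^{s,\gamma}(\mathbb{B})}$ for $s>\nu/p$ follows by transporting the scalar embedding $H_p^s(\mathbb{R}^\nu)\hookrightarrow C(\mathbb{R}^\nu)$ through the map $M_\gamma$ of Definition~\ref{MellSob}: near the boundary $M_\gamma$ sends $w$ to $e^{(\gamma-\frac{\nu}{2})t}w(e^{-t},y)$, and evaluating the scalar embedding at the point corresponding to $x=e^{-t}$ gives the estimate after undoing the substitution. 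Estimate \eqref{ineqtrace} is the same bound applied to $\partial_{z_1}^{k_1}\cdots\partial_{z_n}^{k_n}u_{\mathcal{H}}(z)\in\mathcal{H}_p^{s,\gamma}(\mathbb{B})$, which lies in $BUC(\mathbb{R}^n;\mathcal{H}_p^{s,\gamma}(\mathbb{B}))$ by \eqref{subsetcont}; and \eqref{contemb} holds because for $\gamma>\nu/2$ one has $x^{\gamma-\nu/2}\to0$ as $x\to0$, so $u_{\mathcal{H}}(z)$ extends continuously by $0$ to $\partial\mathcal{B}$ while the $\mathbb{C}_\omega$-component is locally constant near the boundary, whence $u(z)\in C(\mathbb{B})$ uniformly and $BUC^k$ in $z$.

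For (iii) and (iv) the crucial scalar input is that, for $s>\nu/p$ and $\gamma\ge\nu/2$, the space $\mathcal{H}_p^{s,\gamma}(\mathbb{B})\oplus J$ is a Banach algebra under pointwise multiplication — this is the pointwise-multiplication theorem for Mellin--Sobolev spaces, where $\gamma\ge\nu/2$ is exactly the condition making the leading boundary exponent $2\gamma-\nu/2$ of a product no worse than $\gamma$, and $s>\nu/p$ supplies the $C^0$-control via (ii). Granting this, $BUC^k(\mathbb{R}^n;\mathcal{H}_p^{s,\gamma}(\mathbb{B})\oplus J)$ and $H_q^k(\mathbb{R}^n;\mathcal{H}_p^{s,\gamma}(\mathbb{B})\oplus J)$ with $k>n/q$ are Banach algebras, up to an equivalent submultiplicative norm, by the Leibniz rule together with the $C^0$-control in $z$ coming from (ii) (for $H_q^k$ one also uses $H_q^k(\mathbb{R}^n)\hookrightarrow BUC(\mathbb{R}^n)$, $k>n/q$). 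The point for the functional-calculus assertion in (iv) is that $\mathcal{H}_p^{s,\gamma}(\mathbb{B})\oplus\mathbb{C}_\omega$ is a \emph{unital} commutative Banach algebra, with unit the constant function $1=(1-\omega)+\omega$ (here $1-\omega\in\mathcal{H}_p^{s,\gamma}(\mathbb{B})$ since it is smooth and supported away from the singular set, and $\omega\in\mathbb{C}_\omega$); consequently $BUC^k(\mathbb{R}^n;\mathcal{H}_p^{s,\gamma}(\mathbb{B})\oplus\mathbb{C}_\omega)$ is a unital Banach algebra, and for $f$ holomorphic on a neighbourhood of the (bounded) spectrum of $u$ the Dunford integral $f(u)=\frac{1}{2\pi i}\int_\Gamma f(\lambda)(\lambda-u)^{-1}\,d\lambda$ defines $f(u)$ in the same space; since inversion in a Banach algebra is smooth, differentiating under the integral in $z$ shows $f(u)\in BUC^k(\mathbb{R}^n;\mathcal{H}_p^{s,\gamma}(\mathbb{B})\oplus\mathbb{C}_\omega)$. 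For the last assertion of (iv): if $\mathrm{Re}(v)\ge\alpha>0$ on a bounded set $U$, then $0$ is outside the spectrum of each $v\in U$, $1/v=v^{-1}$ is given by the same integral, and a resolvent estimate on $\{\mathrm{Re}(\lambda)\le\alpha/2\}$ uniform in $v\in U$ yields a bound on $\|1/v\|$ in terms of $\alpha$ and the bound of $U$.

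For (v) I would factor the estimate into a $\mathbb{B}$-part and an $\mathbb{R}^n$-part. First, the pointwise-multiplication theorem for Mellin--Sobolev spaces, now in its two-parameter form allowing negative smoothness $s\in(-\sigma,\sigma)$, gives a bounded bilinear map $(\mathcal{H}_{p_1}^{\sigma+\nu/p_1,\nu/2}(\mathbb{B})\oplus J)\times\mathcal{H}_{p_0}^{s,\gamma}(\mathbb{B})\to\mathcal{H}_{p_0}^{s,\gamma}(\mathbb{B})$: the exponent $\sigma+\nu/p_1$ exceeds the Sobolev threshold $\nu/p_1$, the available gap $\sigma$ matches the hypothesis $|s|<\sigma$, and the multiplier weight $\nu/2$ is weight-neutral so the product stays in weight $\gamma$ (multiplication by the $\mathbb{C}_\omega$-component, being multiplication by a smooth bounded function, is harmless). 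Hence each $u(z)$ acts as a bounded multiplier on $X:=\mathcal{H}_{p_0}^{s,\gamma}(\mathbb{B})$ with operator norm $\lesssim\|u(z)\|_{\mathcal{H}_{p_1}^{\sigma+\nu/p_1,\nu/2}(\mathbb{B})\oplus J}$, and $z\mapsto$(multiplication by $u(z)$) lies in $BUC^k(\mathbb{R}^n;\mathcal{L}(X))$. Second, I would use (or prove by the Leibniz rule for $r\in\mathbb{N}_0$ and real interpolation for general $r\in[0,k)$) that $M\cdot v\in H_q^r(\mathbb{R}^n;X)$ with $\|Mv\|_{H_q^r}\le C\|M\|_{BUC^k}\|v\|_{H_q^r}$ whenever $M\in BUC^k(\mathbb{R}^n;\mathcal{L}(X))$ and $0\le r<k$; composing the two bounds gives the claimed inequality with $C_0$ depending only on the listed parameters.

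The only genuinely non-routine ingredients are the scalar Mellin--Sobolev pointwise-multiplication theorems — in the algebra form needed for (iii)--(iv), and in the two-parameter, negative-smoothness form needed for (v); once these are in hand, the rest is bookkeeping with standard vector-valued embedding, interpolation and Leibniz estimates. Within (iv) the delicate point to get exactly right is the unitality of $\mathcal{H}_p^{s,\gamma}(\mathbb{B})\oplus\mathbb{C}_\omega$: it is this — not any property of $\mathcal{H}_p^{s,\gamma}(\mathbb{B})$ by itself — that makes the holomorphic functional calculus (hence closure under $v\mapsto1/v$, $v\mapsto v^m$, etc., as needed for the porous medium equation) available, and it also explains why this part of the statement excludes $J=\{0\}$.
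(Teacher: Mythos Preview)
Your overall strategy matches the paper's: reduce each item to a scalar Mellin--Sobolev fact on $\mathbb{B}$ (monotonicity, Sobolev embedding with weighted pointwise bound, algebra and multiplier properties) combined with a standard vector-valued statement for $H_q^r(\mathbb{R}^n;X)$ or $BUC^k(\mathbb{R}^n;X)$. Parts (i), (ii), (iii), (v) are handled correctly along the same lines as the paper, which simply cites the relevant results from \cite{Am2} for the vector-valued layer and from \cite{RS2}, \cite{Sei} for the scalar Mellin--Sobolev layer.

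There is, however, a genuine gap in (iv). You assert that ``$0$ is outside the spectrum of each $v\in U$'' as soon as $\mathrm{Re}(v)\ge\alpha>0$ pointwise, and then invoke the abstract Dunford calculus. But in a unital Banach algebra the spectrum of an element can strictly contain the closure of its range; unitality alone does not give you this. What is actually needed is \emph{inverse-closedness}: that a pointwise-invertible element of $\mathcal{H}_p^{s,\gamma}(\mathbb{B})\oplus\mathbb{C}_\omega$ (hence of $BUC^k(\mathbb{R}^n;\mathcal{H}_p^{s,\gamma}(\mathbb{B})\oplus\mathbb{C}_\omega)$) is invertible in the algebra, with norm controlled by the pointwise lower bound. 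This is a nontrivial statement about the weighted structure near $\partial\mathcal{B}$, and it is exactly what the paper imports from \cite[Lemma~6.2 and Lemma~6.3]{RS2}: those results show directly that $1/u$ and its $z$-derivatives lie in $BUC(\mathbb{R}^n;\mathcal{H}_p^{s,\gamma}(\mathbb{B})\oplus\mathbb{C}_\omega)$, after which the Dunford integral over a contour around $\mathrm{Ran}(u)$ is legitimate. So the delicate point in (iv) is not unitality (your decomposition $1=(1-\omega)+\omega$ is fine) but inverse-closedness; once you add that ingredient, your argument goes through and coincides with the paper's.
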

\begin{proof}
{\bf (i)} Follows by \cite[Theorem VII.5.6.5 (ii)]{Am2} combined with \cite[Remark 2.1 (b)]{Sei}.\\
{\bf (ii)} The first embedding in \eqref{subsetcont} follows by \cite[Theorem VII.4.1.4 (ii)]{Am2}. The second inclusion in \eqref{subsetcont} as well as \eqref{ineqtrace} and \eqref{contemb} follow by \cite[Corollary 2.9.]{RS1} (see also \cite[Corollary 1.2.9]{Le}). \\
{\bf (iii)} Follows by Lemma \ref{Hprop} (i), \cite[Corollary VII.6.2.4]{Am2} and \cite[Lemma 3.2]{RS2}.\\
{\bf (iv)} The Banach algebra property follows by the definition of $BUC^{k}$-spaces and by \cite[Lemma 3.2]{RS2}. Furthermore, if $u\in BUC^{k}(\mathbb{R}^{n};\mathcal{H}_{p}^{s,\gamma}(\mathbb{B})\oplus\mathbb{C}_{\omega})$ is pointwise invertible, then by \cite[Lemma 6.2 and Lemma 6.3]{RS2}, we have that $\partial_{z_{1}}^{k_{1}}\dots \partial_{z_{n}}^{k_{n}}(1/u)\in BUC(\mathbb{R}^{n};\mathcal{H}_{p}^{s,\gamma}(\mathbb{B})\oplus\mathbb{C}_{\omega})$ for each $k_{j}\in\mathbb{N}_{0}$, $j\in\{1,\dots,n\}$, with $k_{1}+\dots+k_{n}\leq k$, where $(z_{1}\dots,z_{n})\in\mathbb{R}^{n}$. Hence, $1/u\in BUC^{k}(\mathbb{R}^{n};\mathcal{H}_{p}^{s,\gamma}(\mathbb{B})\oplus\mathbb{C}_{\omega})$. Therefore, if $f$ is a $\mathbb{C}$-valued holomorphic function in a neighborhood of $\mathrm{Ran}(u)$ and $\Gamma$ a simple closed path around $\mathrm{Ran}(-u)$ in the area of holomorphicity of $f$, then by the formula
$$
f(u)=\frac{1}{2\pi i}\int_{\Gamma}f(-\lambda)(u+\lambda)^{-1}d\lambda
$$
we deduce that $f(u)\in BUC^{k}(\mathbb{R}^{n};\mathcal{H}_{p}^{s,\gamma}(\mathbb{B})\oplus\mathbb{C}_{\omega})$. The above argument also shows the boundedness of the set $\{1/v\, |\, v\in U\}$.\\
{\bf (v)} Follows by Remark \ref{UMDalpha}, \cite[Theorems VII.2.6.5 and VII.6.5.1]{Am2}, \cite[p. 155]{Am2} and \cite[Corollary 3.3]{RS2}.
\end{proof}

We close this section with some properties of the function spaces on manifolds with edges.

\begin{lemma} \label{propF}
Let $\rho\in(0,1)$, and, for any $a_{0},a_{1}\in\mathbb{R}$ denote $a_{\rho}=(1-\rho)a_{0}+\rho a_{1}$. Moreover, let $p,q\in(1,\infty)$, $r,r_{0},r_{1}\geq0$ and $s,s_{0},s_{1},\gamma, \gamma_{0}, \gamma_{1}\in\mathbb{R}$ satisfying $r_{0}\neq r_{1}$, $0\leq s_{0}\leq s_{1}$ and $\gamma_{0}\leq \gamma_{1}$.\\
{\bf(i)} The space $\mathcal{H}_{q,p,\gamma}^{r,s}(\mathbb{F})_{\oplus J}$ is UMD, has the property $(\alpha)$ and is also $\nu$-admissible in the sense of \cite[p. 173]{Am2}.\\
{\bf(ii)} If $k\in\mathbb{N}_{0}$, then $\mathcal{H}_{q,p,\gamma}^{k,s}(\mathbb{F})_{\oplus J}=W^{q,k}(\mathbb{V};\mathcal{H}_{p}^{s,\gamma}(\mathbb{B})\oplus J)$, with equivalent norms. \\
{\bf(iii)} We have
$$
[\mathcal{H}_{q,p,\gamma_{0}}^{r_{0},s_{0}}(\mathbb{F}),\mathcal{H}_{q,p,\gamma_{1}}^{r_{1},s_{1}}(\mathbb{F})]_{\rho}=\mathcal{H}_{q,p,\gamma_{\rho}}^{r_{\rho},s_{\rho}}(\mathbb{F}).
$$
If in addition $\gamma_{0}<\nu/2<\gamma_{1}$, then for any $\varepsilon>0$ we have
$$
\mathcal{H}_{q,p,\gamma_{\rho}}^{r_{\rho},s_{\rho}}(\mathbb{F})_{\oplus \mathbb{C}_{\omega}} \hookrightarrow [\mathcal{H}_{q,p,\gamma_{0}}^{r_{0},s_{0}}(\mathbb{F}),\mathcal{H}_{q,p,\gamma_{1}}^{r_{1},s_{1}}(\mathbb{F})_{\oplus \mathbb{C}_{\omega}}]_{\rho}\hookrightarrow \mathcal{H}_{q,p,\gamma_{\rho}-\varepsilon}^{r_{\rho},s_{\rho}}(\mathbb{F})_{\oplus \mathbb{C}_{\omega}}.
$$
While, if $\nu/2<\gamma_{0}\leq \gamma_{1}$, then
$$
[\mathcal{H}_{q,p,\gamma_{0}}^{r_{0},s_{0}}(\mathbb{F})_{\oplus\mathbb{C}_{\omega}},\mathcal{H}_{q,p,\gamma_{1}}^{r_{1},s_{1}}(\mathbb{F})_{\oplus\mathbb{C}_{\omega}}]_{\rho}=\mathcal{H}_{q,p,\gamma_{\rho}}^{r_{\rho},s_{\rho}}(\mathbb{F})_{\oplus\mathbb{C}_{\omega}}.
$$
{\bf(iv)} If $r_{1}\geq r_{0}\geq0$, then
$$
\mathcal{H}_{q,p,\gamma_{1}}^{r_{1},s_{1}}(\mathbb{F})_{\oplus J}\hookrightarrow \mathcal{H}_{q,p,\gamma_{0}}^{r_{0},s_{0}}(\mathbb{F})_{\oplus J}.
$$
{\bf(v)} If $r>n/q$ and $s>\nu/p$, then
$$
\mathcal{H}_{q,p,\gamma}^{r,s}(\mathbb{F})_{\oplus J}\hookrightarrow C(\mathbb{V}; \mathcal{H}_{p}^{s,\gamma}(\mathbb{B})\oplus J)\subset C(\mathbb{F}^{\circ}).
$$
Furthermore, for each $u\in \mathcal{H}_{q,p,\gamma}^{r,s}(\mathbb{F})_{\oplus J}$ there exists a $c\in C(\mathbb{F})$ such that, in local coordinates $(z,x,y)\in\mathcal{V}\times[0,1)\times\mathcal{Y}$ we have $c=c(z)$, $u(z,\cdot,\cdot)-\omega(\cdot) c(z)\in \mathcal{H}_{p}^{s,\gamma}(\mathbb{B})$ and
$$
|u(z,x,y)-\omega(x) c(z)|\leq Cx^{\gamma-\nu/2}\|u(z,\cdot,\cdot)-\omega(\cdot) c(z)\|_{\mathcal{H}_{p}^{s,\gamma}(\mathbb{B})},
$$
for some constant $C>0$ depending only on $\nu$, $p$ and $s$. In particular, $c\equiv 0$ when $J=\{0\}$. Consequently, if in addition $\gamma>\nu/2$, then
$$
\mathcal{H}_{q,p,\gamma}^{r,s}(\mathbb{F})_{\oplus J}\hookrightarrow C(\mathbb{F}).
$$
{\bf(vi)} If $k\in\mathbb{N}$, $k>n/q$, $s>\nu/p$ and $\gamma\geq\nu/2$, then, up to an equivalent norm, the space $\mathcal{H}_{q,p,\gamma}^{k,s}(\mathbb{F})_{\oplus J}$ is a Banach algebra. \\
{\bf(vii)} Let $r_{1}>n/q$, $s_{1}>\nu/p$, $r_{1}-n/q>k>r_{0}\geq0$, for certain $k\in\mathbb{N}$, and $s_{0}\in(\nu/p-s_{1},s_{1}-\nu/p)$. Moreover, let $V\subset \mathcal{H}_{q,p,\nu/2}^{r_{1},s_{1}}(\mathbb{F})_{\oplus \mathbb{C}_{\omega}}$ be a bounded set and $f$ be a complex valued function analytic on a neighbourhood of the closure of $\cup_{v\in V}\mathrm{Ran}(v)$. Then, for each $u\in \mathcal{H}_{q,p,\gamma}^{r_{0},s_{0}}(\mathbb{F})$ and $v\in V$ we have $uf(v)\in \mathcal{H}_{q,p,\gamma}^{r_{0},s_{0}}(\mathbb{F})$. In addition, there exists a constant $C_{V,f}>0$, depending only on $n$, $\nu$, $k$, $r_{0}$, $r_{1}$, $s_{0}$, $s_{1}$, $\gamma$, $q$, $p$, $V$ and $f$, such that
\begin{equation}\label{functcalinv1}
\|uf(v)\|_{\mathcal{H}_{q,p,\gamma}^{r_{0},s_{0}}(\mathbb{F})}\leq C_{V,f}\|u\|_{\mathcal{H}_{q,p,\gamma}^{r_{0},s_{0}}(\mathbb{F})}
\end{equation}
and
\begin{equation}\label{functcalinv2}
\|u(f(v_{1})-f(v_{2}))\|_{\mathcal{H}_{q,p,\gamma}^{r_{0},s_{0}}(\mathbb{F})}\leq C_{V,f}\|v_{1}-v_{2}\|_{ \mathcal{H}_{q,p,\nu/2}^{r_{1},s_{1}}(\mathbb{F})_{\oplus \mathbb{C}_{\omega}}}\|u\|_{\mathcal{H}_{q,p,\gamma}^{r_{0},s_{0}}(\mathbb{F})},
\end{equation}
for all $u\in \mathcal{H}_{q,p,\gamma}^{r_{0},s_{0}}(\mathbb{F})$ and $v,v_{1},v_{2}\in V$.
\end{lemma}
\begin{proof}
{\bf(i)}-{\bf(vi)} follow by Remark \ref{UMDalpha}, Lemma \ref{Hprop} and Lemma \ref{ProbHR}. \\
{\bf(vii)} Due to (v), we can choose a simple closed path $\Gamma$ in $\mathbb{C}$ around the closure of $\cup_{v\in V}\mathrm{Ran}(v)$. For each $u\in \mathcal{H}_{q,p,\gamma}^{r_{0},s_{0}}(\mathbb{F})$ and $v\in V$ we write 
$$
uf(v)=\frac{u}{2\pi i}\int_{\Gamma}\frac{f(\lambda)}{\lambda-v}d\lambda, 
$$
so that
$$
\|uf(v)\|_{\mathcal{H}_{q,p,\gamma}^{r_{0},s_{0}}(\mathbb{F})} \leq (2\pi)^{-1}\int_{\Gamma}\sum_{j=1}^{N}\|(\kappa_{j})_{\ast}(\phi_{j}u(\lambda-v)^{-1})\|_{H_{q}^{r_{0}}(\mathbb{R}^{n};\mathcal{H}_{p}^{s_{0},\gamma}(\mathbb{B}))} |f(\lambda)|d\lambda,
$$
where $\{\phi_{j}\}_{j\in\{1,\dots,N\}}$ and $\{\kappa_{j}\}_{j\in\{1,\dots,N\}}$ are as in \eqref{nrmHF}. Then, \eqref{functcalinv1} follows by (ii), (iv) and (v) of Lemma \ref{ProbHR}. Concerning \eqref{functcalinv2}, it follows similarly by the identity 
$$
\phi_{j}u(f(v_{1})-f(v_{2}))=(v_{1}-v_{2})\frac{\phi_{j}u}{2\pi i}\int_{\Gamma}\frac{f(\lambda)}{(\lambda-v_{1})(\lambda-v_{2})}d\lambda.
$$
\end{proof}

\section{The Laplacian on manifolds with edges}

We start by freezing the coefficients to the components of the Laplacian $\Delta_{\mathbb{F}}$. If $k_{j}:V_{j}\subset \mathcal{V}\rightarrow \mathbb{R}^{n}$ is a coordinate map as in \eqref{nrmHF} and $z\in V_{j}$, let $\Delta_{\mathfrak{m},z}(j)$ be the differential operator on $\mathbb{R}^{n}$ obtained from the local expression of $\Delta_{\mathfrak{m}}$ on $V_{j}$ by freezing its coefficients in $\kappa_{j}(z)$; for convenience of notation, in the sequel we shall simply write $\Delta_{\mathfrak{m},z}$ rather than $\Delta_{\mathfrak{m},z}(j)$.

\begin{proposition}\label{p2}
Let $p,q\in(1,\infty)$, $r,s\geq0$, $\gamma\in\mathbb{R}$, $\theta\in[0,\pi)$ and fix $z\in V_{j}$ for some $j\in\{1,\dots,N\}$, where $V_{j}\subset \mathcal{V}$ is as in \eqref{nrmHF}. Let $\underline{\Delta}_{\mathfrak{m},z,r}$ be the closed extension of $\Delta_{\mathfrak{m},z}$ in $H_{q}^{r}(\mathbb{R}^{n};\mathcal{H}_{p}^{s,\gamma}(\mathbb{B}))$ with domain $H_{q}^{r+2}(\mathbb{R}^{n};\mathcal{H}_{p}^{s,\gamma}(\mathbb{B}))$. Then, there exists a $c>0$ such that $c-\underline{\Delta}_{\mathfrak{m},z,r}\in \mathcal{RH}^{\infty}(\theta)$.
\end{proposition}
\begin{proof}
It suffices to show that $c-\underline{\Delta}_{\mathfrak{m},z,r} \in \mathcal{H}^{\infty}(\theta)$. Then, the result will follow by Remark \ref{UMDalpha} and \cite[Theorem 5.3]{KaW}. We write $\Delta_{\mathfrak{m},z}={}_0\Delta_{\mathfrak{m},z}+{}_1\Delta_{\mathfrak{m},z}$, where ${}_0\Delta_{\mathfrak{m},z}$ stands for the principal part of $\Delta_{\mathfrak{m},z}$, and denote $E_{\eta}=H_{q}^{\eta}(\mathbb{R}^{n};\mathcal{H}_{p}^{s,\gamma}(\mathbb{B}))$, for any $\eta\geq0$. We split the proof in several steps according to different values of $r$. 

{\em Case $r=0$}. Note that $-{}_0\Delta_{\mathfrak{m},z}$ is a second order {\em homogeneous elliptic operator} in the sense of \cite[(8.3)-(8.5)]{Ha}. Hence, since $\mathcal{H}_{p}^{s,\gamma}(\mathbb{B})$ is UMD (see Remark \ref{UMDalpha}), by \cite[Theorem 8.2.1]{Ha} and Lemma \ref{Hprop} (i), {\em the part} $(-{}_0\Delta_{\mathfrak{m},z})_{q}$ of $-{}_0\Delta_{\mathfrak{m},z}$ in $E_{0}$ has domain $E_{2}$; see \cite[p. 219]{Ha} for the definition of the part $(-{}_0\Delta_{\mathfrak{m},z})_{q}$ of $-{}_0\Delta_{\mathfrak{m},z}$ in $E_{0}$. Denote $(-{}_0\Delta_{\mathfrak{m},z})_{q}$ simply by ${}_0\underline{\Delta}_{\mathfrak{m},z}$. Therefore, by \cite[Theorem 8.2.7]{Ha}, there exists a $c>0$ such that $c- {}_0\underline{\Delta}_{\mathfrak{m},z}\in \mathcal{H}^{\infty}(\theta)$; this result also follows by \cite[Theorem 5.5]{DHP}. In particular, $c-{}_0\underline{\Delta}_{\mathfrak{m},z}\in \mathcal{P}(\theta)$. Hence, by \cite[(I.2.5.2) and (I.2.9.6)]{Am}, for $c_{0}$ sufficiently large and each $\rho\in(1/2,1)$ we have
$$
\mathcal{D}((c_{0}-{}_0\underline{\Delta}_{\mathfrak{m},z})^{\rho}) \hookrightarrow (E_{0},E_{2})_{\frac{1}{2},1}.
$$
Thus, by \cite[Theorem 8.2.1]{Ha} and Lemma \ref{Hprop} (i), we obtain
$$
\mathcal{D}((c_{0}-{}_0\underline{\Delta}_{\mathfrak{m},z})^{\rho}) \hookrightarrow E_{1},
$$
which implies that 
\begin{equation}\label{fracpowerest}
{}_1\Delta_{\mathfrak{m},z}(c_{0}-{}_0\underline{\Delta}_{\mathfrak{m},z})^{-\rho} \in \mathcal{L}(E_{0}).
\end{equation}
As a consequence, due to \cite[Lemma III.4.8.1]{Am}, by possibly increasing $c$ and using Remark \ref{TaLnLem}, we get that $c-\underline{\Delta}_{\mathfrak{m},z,0} \in \mathcal{P}(\theta)$. Then, the result follows by Remark \ref{TaLnLem}, \eqref{fracpowerest} and the perturbation result \cite[Proposition 5.5.3]{Ha}.

{\em Remark}. Denote $A_{\eta}=c-\underline{\Delta}_{\mathfrak{m},z,\eta}$, where $c$ is the constant from the previous step. We claim that $S_{\theta}\subset \rho(-A_{\eta})$ and 
$$
(A_{\eta}+\lambda)^{-1}=(A_{0}+\lambda)^{-1}|_{E_{\eta}} \quad \text{for each} \quad \lambda\in S_{\theta}.
$$
To this end, it suffices to show that the identities
$$
 (A_{0}+\lambda)^{-1}(A_{\eta}+\lambda)=I \quad \text{and} \quad (A_{\eta}+\lambda)(A_{0}+\lambda)^{-1}=I 
$$
hold on $E_{\eta+2}$ and $E_{\eta}$ respectively. The first one is trivial. For the second one, let us first restrict to the case of $\eta\in(0,1]$. Let $u\in E_{2}$ such that $(c-\Delta_{\mathfrak{m},z}+\lambda)u\in E_{\eta}$. Then, ${}_0\Delta_{\mathfrak{m},z}u=(c+\lambda)u-{}_1\Delta_{\mathfrak{m},z}u \in E_{\eta}$, i.e. $u$ belongs to the maximal domain of ${}_0\Delta_{\mathfrak{m},z}$ in $E_{\eta}$. By ellliptic regularity, similarly to the scalar case, this maximal domain is $E_{\eta+2}$, see also \cite[Theorem 8.2.1]{Ha}. The assertion for higher $\eta$ follows by iteration. \\
Consequently, if $H_{0}^{\infty}(\theta)$ is as in Definition \ref{hinftycal}, then for any $f\in H_{0}^{\infty}(\theta)$ we have that
\begin{equation}\label{restrf}
f(A_{\eta})=f(A_{0})|_{E_{\eta}}.
\end{equation}

{\em Case $r/2\in\mathbb{N}_{0}$}. We proceed by induction. Assume that $A_{r}\in \mathcal{H}^{\infty}(\theta)$ for some $r$ with $r/2\in\mathbb{N}_{0}$. By \eqref{restrf}, for any $f\in H_{0}^{\infty}(\theta)$ and $u\in E_{r+2}$, we have
\begin{eqnarray*}
\|f(A_{r+2})u\|_{E_{r+2}} &=& \|f(A_{r})u\|_{E_{r+2}}\\
&\leq& C_{1}\big(\|f(A_{r})u\|_{E_{r}}+\|A_{r}f(A_{r})u\|_{E_{r}}\big)\\
&=&C_{1}\big(\|f(A_{r})u\|_{E_{r}}+\|f(A_{r})A_{r}u\|_{E_{r}}\big)\\
&\leq& C_{2}(\sup_{\lambda\in\mathbb{C}\backslash S_{\theta}}|f(\lambda)|)\big(\|u\|_{E_{r}}+\|A_{r}u\|_{E_{r}}\big)\\
&\leq& C_{3}(\sup_{\lambda\in\mathbb{C}\backslash S_{\theta}}|f(\lambda)|)\|u\|_{E_{r+2}},
\end{eqnarray*}
for certain $C_{1}, C_{2}, C_{3}>0$. Note that the above argument also shows sectoriality for $A_{r+2}$ at the same sector as $A_{r}$. Therefore, $A_{r+2}\in \mathcal{H}^{\infty}(\theta)$.

{\em Case $r\geq0$}. Follows from the previous step and interpolation, i.e. by Lemma \ref{Hprop} (ii), \eqref{restrf} and \cite[Theorem 2.6]{Lunar18}.
\end{proof}

\begin{remark}\label{remarkhinfty}{\em 
Let $\underline{\Delta}_{\mathfrak{g},z,s}$ be the closed extension of $\Delta_{\mathfrak{g}}$ from Theorem \ref{SecLapOrigin}. By Remark \ref{UMDalpha} and \cite[Theorem 5.3]{KaW}, for any $c>0$ and $\theta\in[0,\pi)$ we have $c-\underline{\Delta}_{\mathfrak{g},z,s}\in\mathcal{RH}^{\infty}(\theta)$.} 
\end{remark}

Based on the above two results, Theorem \ref{t1} provides us $R$-sectorial local approximations for the leading term of the edge Laplacian $\Delta_{\mathfrak{e}}$. 

\begin{proposition}\label{rsecforfreezing}
Let $p,q\in(1,\infty)$, $s\geq0$, $\gamma$ be as in \eqref{gamma2}, fix $z\in\mathcal{V}$ and let $\underline{\Delta}_{\mathfrak{m},z,r}$ be the operator defined in Proposition \ref{p2}. Moreover let $\underline{\Delta}_{\mathfrak{g},z,s}$ be the closed extension of $\Delta_{\mathfrak{g}}$ from Theorem \ref{SecLapOrigin} and let $\underline{\Delta}_{\mathfrak{g},z,s}$ denote its natural extension to $H_{q}^{r}(\mathbb{R}^{n};\mathcal{H}_{p}^{s,\gamma}(\mathbb{B}))$ with domain $H_{q}^{r}(\mathbb{R}^{n};\mathcal{H}_{p}^{s+2,\gamma+2}(\mathbb{B})\oplus\mathbb{C}_{\omega})$. Then the operator $\underline{\Delta}_{\mathfrak{m},z,0}+\underline{\Delta}_{\mathfrak{g},z,s}$ with domain $H_{q}^{2}(\mathbb{R}^{n};\mathcal{H}_{p}^{s,\gamma}(\mathbb{B}))\cap L^{q}(\mathbb{R}^{n};\mathcal{H}_{p}^{s+2,\gamma+2}(\mathbb{B})\oplus\mathbb{C}_{\omega})$ in $L^{q}(\mathbb{R}^{n};\mathcal{H}_{p}^{s,\gamma}(\mathbb{B}))$ is closed and moreover, for any $\theta\in [0,\pi)$ there exists a $c>0$ such that $c-\underline{\Delta}_{\mathfrak{m},z,0}-\underline{\Delta}_{\mathfrak{g},z,s} \in \mathcal{R}(\theta)$.
\end{proposition}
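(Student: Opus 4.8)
The plan is to apply Theorem \ref{t1} with the roles of the two operators played by $A = \underline{\Delta}_{\mathbb{V},r}(w)+\underline{D}_r(w)$ acting in the $z$-variables on $\mathbb{R}^\nu$ and $B = \underline{\Delta}_{z,s}$ acting fiberwise in the Mellin-Sobolev variable. First I would set $X_0 = \mathcal{H}_p^{s,\gamma}(\mathbb{B})$ and $X_1 = \mathcal{H}_p^{s+2,\gamma+2}(\mathbb{B})\oplus\mathbb{C}_\omega$; by Remark \ref{UMDalpha} (i),(iv) these are UMD and $X_1\hookrightarrow X_0$ continuously and densely (density holds because $C_c^\infty(\mathbb{B}^\circ)$ is dense and the embedding of the smooth functions through $\mathbb{C}_\omega$ is standard). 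The ambient Bochner spaces then become $E_0 = L^q(\mathbb{R}^\nu;X_0)$ and $E_1 = L^q(\mathbb{R}^\nu;X_1)$, matching the hypotheses of Theorem \ref{t1} with $\mu = 2$ and $n$ there replaced by $\nu$. The operator $A$ is a second-order differential operator in $z$; its coefficients, obtained by freezing the coefficients of $\Delta_{\mathbb{V}} + D$ at $w$ except for the $z$-dependence that remains, must be checked to lie in $W^{1,\infty}(\mathbb{R}^\nu;\mathbb{C})$ — this is where one uses that $\mathfrak{m}$ is a smooth metric on the compact $\mathcal{V}$ and that the local charts $V_j$ are bounded, so after a harmless cutoff/extension the coefficients are globally Lipschitz.

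Next I would verify that $B(\cdot)$ in Theorem \ref{t1} is constant. Here this is immediate: $\underline{\Delta}_{z,s}$ does not depend on the base point in $\mathbb{R}^\nu$ at all (we have already frozen the transverse variable $w$), so $B(\cdot)\equiv B \in \mathcal{L}(X_1,X_0)$ is trivially constant and continuous, and we may take $\mathcal{K}=\emptyset$, $c=0$. Thus the delicate ``uniform boundedness of spectral shifts'' steps in the proof of Theorem \ref{t1} are vacuous in the present application, but that is fine — we just invoke the theorem as a black box. It then remains to check the alternative (i): I would use Proposition \ref{p2}, which gives a constant $c_1>0$ with $c_1 + A = c_1 - \underline{\Delta}_{\mathbb{V},r}(w) - \underline{D}_r(w) \in \mathcal{RH}^\infty(\theta_A)$ for any $\theta_A\in(0,\pi)$, and Remark \ref{remarkhinfty} (equivalently Theorem \ref{SecLapOrigin} combined with Remark \ref{UMDalpha} (ii) and \cite[Theorem 5.3]{KaW}), which gives $c_2 - \underline{\Delta}_{z,s}\in\mathcal{RH}^\infty(\theta_B)$ for any $\theta_B\in(0,\pi)$ and any $c_2>0$. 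In particular $B + c_2 \in\mathcal{RH}^\infty(\theta_B)\subseteq\mathcal{H}^\infty(\theta_B)$, so condition (i) of Theorem \ref{t1} holds after replacing $A$ by $A + c_1$. Choosing $\theta_A$ and $\theta_B$ both close enough to $\pi$ that $\theta_A + \theta_B > \pi$ — indeed we can take $\theta_A = \theta_B = \theta'$ for any prescribed $\theta'\in(\theta,\pi)$ — Theorem \ref{t1} yields a $c_0\geq 0$ such that $(A + c_1) + (B + c_2) + c_0 \in \mathcal{R}(\theta')\subseteq\mathcal{R}(\theta)$ and that the sum with domain $H_q^2(\mathbb{R}^\nu;X_0)\cap L^q(\mathbb{R}^\nu;X_1)$ is closed. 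Setting $c = c_0 + c_1 + c_2$ gives exactly the asserted statement, since $A + B = \underline{\Delta}_{\mathbb{V},r}(w)+\underline{D}_r(w)+\underline{\Delta}_{z,s}$ and the stated domain coincides with $H_q^2(\mathbb{R}^\nu;\mathcal{H}_p^{s,\gamma}(\mathbb{B}))\cap L^q(\mathbb{R}^\nu;\mathcal{H}_p^{s+2,\gamma+2}(\mathbb{B})\oplus\mathbb{C}_\omega)$.

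There is one bookkeeping point I would address carefully: Theorem \ref{t1} is stated for the base index $r=0$ (the spaces $H_p^\mu(\mathbb{R}^n;X_0)$ and $L^p(\mathbb{R}^n;X_1)$), whereas Proposition \ref{rsecforfreezing} asserts closedness and $R$-sectoriality for the operator with domain $H_q^2(\mathbb{R}^\nu;\mathcal{H}_p^{s,\gamma}(\mathbb{B}))\cap L^q(\mathbb{R}^\nu;\mathcal{H}_p^{s+2,\gamma+2}(\mathbb{B})\oplus\mathbb{C}_\omega)$ inside $L^q(\mathbb{R}^\nu;\mathcal{H}_p^{s,\gamma}(\mathbb{B}))$, which is precisely the $r=0$ case of the operator $\underline{\Delta}_{\mathbb{V},r}(w)+\underline{D}_r(w)+\underline{\Delta}_{z,s}$. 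So no extra regularity in $z$ is needed here; the higher-$r$ versions are handled separately (they will be needed in the proof of Theorem \ref{thsec33} via Theorem \ref{mxtderth} and a bootstrap in $r$). I would state explicitly that the $E_\eta$-compatibility of resolvents established in Proposition \ref{p2} is not required for the present proposition.

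The main obstacle, such as it is, is not any deep analysis but rather matching the hypotheses of Theorem \ref{t1} precisely — in particular confirming that $\mathcal{RH}^\infty$ membership (rather than merely $\mathcal{H}^\infty$) is available on the correct side so that the parabolicity angle $\theta_A+\theta_B>\pi$ can be met with both angles arbitrarily close to $\pi$, and checking the Lipschitz regularity of the frozen-at-$w$ coefficients of $\Delta_{\mathbb{V}} + D$. Both of these reduce to smoothness of the metric data on the compact manifolds $\mathcal{V}$, $\mathcal{B}$, $\partial\mathcal{B}$ and the explicit formulas \eqref{deltaF}--\eqref{Deltaz}, so the proof is short once the dictionary with Theorem \ref{t1} is in place.
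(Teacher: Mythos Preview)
Your approach is correct and matches the paper's proof, which simply invokes Theorem~\ref{t1} together with Remark~\ref{UMDalpha}~(i), Proposition~\ref{p2}, and Remark~\ref{remarkhinfty}. One simplification you are missing: after freezing at $w$, the coefficients of $\Delta_{\mathbb{V}}(w)+D(w)$ are genuinely \emph{constant} (not merely Lipschitz in $z$), so the $W^{1,\infty}$ check and cutoff/extension discussion are unnecessary---the paper explicitly notes that both the coefficients of $A$ and the map $B(\cdot)$ are constant here; also watch the sign convention, since in Theorem~\ref{t1} one should take $A=-\underline{\Delta}_{\mathbb{V},r}(w)-\underline{D}_r(w)$ (possibly shifted) rather than the Laplacian itself.
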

\begin{proof}
The result follows by Theorem \ref{t1}, Remark \ref{UMDalpha}, Proposition \ref{p2} and Remark \ref{remarkhinfty}. Note that in our case the coefficients of $A$ and the map $B(\cdot)$ in Theorem \ref{t1} are both constant.
\end{proof}

{\bf Proof of Theorem \ref{thsec33}.} Denote 
$$
X_{0}^{r}=\mathcal{H}_{q,p,\gamma}^{r,s}(\mathbb{F}), \quad X_{1}^{r}=\mathcal{H}_{q,p,\gamma}^{r+2,s}(\mathbb{F})\cap \mathcal{H}_{q,p,\gamma+2}^{r,s+2}(\mathbb{F})_{\oplus\mathbb{C}_{\omega}}, 
$$
$$
E_{0}^{r}=H_{q}^{r}(\mathbb{R}^{n};\mathcal{H}_{p}^{s,\gamma}(\mathbb{B})), \quad E_{1}^{r}=H_{q}^{r+2}(\mathbb{R}^{n};\mathcal{H}_{p}^{s,\gamma}(\mathbb{B}))\cap H_{q}^{r}(\mathbb{R}^{n};\mathcal{H}_{p}^{s+2,\gamma+2}(\mathbb{B})\oplus\mathbb{C}_{\omega}),
$$
and let $\underline{\Delta}_{\mathfrak{e},r}$ be the operator $\Delta_{\mathfrak{e}}:X_{1}^{r}\rightarrow X_{0}^{r}$. We split the proof in several steps according to different values of $r$.\\
{\em Case $r=0$}. Denote by $U_{\tau}(z)$ the geodesic ball in $\mathbb{V}$ of radius $\tau>0$, centered at $z\in \mathbb{V}$. Consider two open covers $\{U_{\tau}(z_{j})\}_{j\in\{1,\dots,N\}}$ and $\{U_{2\tau}(z_{j})\}_{j\in\{1,\dots,N\}}$ of $\mathbb{V}$ by coordinate charts, where $z_{1},\dots,z_{N}\in \mathcal{V}$, for some $\tau>0$ and $N\in\mathbb{N}$. Let $\{\chi_{j}\}_{j\in\{1,\dots,N\}}$ be a collection of smooth positive functions bounded by one, such that $\chi_{j}=1$ in $U_{\tau}(z_{j})$ and $\chi_{j}=0$ outside of $U_{2\tau}(z_{j})$, for each $j$. In addition, let $\{\phi_{j}\}_{j\in\{1,\dots,N\}}$ be a partition of unity subordinate to the covering $\{U_{\tau}(z_{j})\}_{j\in\{1,\dots,N\}}$ and $\{\psi_{j}\}_{j\in\{1,\dots,N\}}$ be a collection of smooth positive functions such that $\mathrm{supp}(\phi_{j})\subset \mathrm{supp}(\psi_{j})\subset U_{\tau}(z_{j})$ and $\psi_{j}=1$ on $\mathrm{supp}(\phi_{j})$, for each $j\in\{1,\dots,N\}$.

For each $j\in\{1,\dots,N\}$, let $A_{z_{j}}=\underline{\Delta}_{\mathfrak{m},z_{j},0}+\underline{\Delta}_{\mathfrak{g},z_{j},s} : E_{1}^{0} \rightarrow E_{0}^{0}$ be the operator defined in Proposition \ref{rsecforfreezing}. By Proposition \ref{rsecforfreezing}, there exists a $c_{0}>0$ such that $c_{0}-A_{z_{j}}\in \mathcal{R}(\theta)$, $j\in\{1,\dots,N\}$. By the same argument as in Step 2 and Step 3 of the proof of Theorem \ref{t1}, the spectral shift $c_{0}$ and the $R$-sectorial bounds of $c_{0}-A_{z_{j}}$ can be chosen uniformly bounded in $z_{j}\in \mathbb{V}$, i.e. there exist $c_{0},K_{0}>0$, with $c_{0}$ arbitrary large, such that $c_{0}-A_{z_{j}}\in \mathcal{R}(K_{0},\theta)$ for each $z_{j}\in \mathbb{V}$, $j\in\{1,\dots,N\}$.

For any $\delta, \vartheta\in(0,1)$, by Lemma \ref{Hprop} (iii) and Corollary \ref{sharpmixder} we have that
 $$
 [E_{0}^{0},E_{1}^{0}]_{\delta}\hookrightarrow H_{q}^{2\delta(1-\vartheta)}(\mathbb{R}^{n};\mathcal{H}_{p}^{s+2\delta \vartheta,\gamma+2\delta\vartheta-\varepsilon}(\mathbb{B})\oplus\mathbb{C}_{\omega}),
 $$
 for all $\varepsilon>0$. Hence, by choosing $2\delta(1-\vartheta)>1$, \cite[(I.2.5.2) and (I.2.9.6)]{Am} imply 
 $$
 \mathcal{D}((c_{0}-A_{z_{j}})^{\delta})\hookrightarrow H_{q}^{1}(\mathbb{R}^{n};\mathcal{H}_{p}^{s,\gamma}(\mathbb{B})),
 $$
 so that $ D_{z_{j}}(c_{0}-A_{z_{j}})^{-\delta} \in \mathcal{L}(E_{0}^{0})$, for each $j\in\{1,\dots,N\}$, where by $D_{z_{j}}$ we denote the operator arising from the local expression of $D$ after freezing its coefficients at the point $z_{j}$. Therefore, by writing 
 $$
 D_{z_{j}}(c_{0}+\underline{c}_{0}-A_{z_{j}})^{-1}=D_{z_{j}}(c_{0}-A_{z_{j}})^{-\delta}(c_{0}-A_{z_{j}})^{\delta}(c_{0}+\underline{c}_{0}-A_{z_{j}})^{-1} 
 $$
 for some $\underline{c}_{0}>0$, and then choosing $\underline{c}_{0}$ large enough, by the perturbation result \cite[Theorem 1]{KW}, \cite[Lemma 2.6]{RS2} and Remark \ref{TaLnLem} we get that $c_{1}-A_{z_{j}}-D_{z_{j}}\in \mathcal{R}(K_{1},\theta)$, for certain $c_{1},K_{1}>0$ and all $j\in\{1,\dots,N\}$.

Define 
\begin{equation}\label{eeqqppert}
A_{j}=\chi_{j}(c_{1}-\underline{\Delta}_{\mathfrak{e},0})+(1-\chi_{j})(c_{1}-A_{z_{j}}-D_{z_{j}})=c_{1}-A_{z_{j}}-D_{z_{j}}+\chi_{j}(A_{z_{j}}+D_{z_{j}}-\underline{\Delta}_{\mathfrak{e},0}), 
\end{equation}
 $j\in\{1,\dots,N\}$, with domain $E_{1}^{0}$ in $E_{0}^{0}$, where $\underline{\Delta}_{\mathfrak{e},0}$ is expressed in local coordinates in $U_{2\tau}(z_{j})$. By taking $\tau>0$ sufficiently small, and possibly $N$ large enough, due to \cite[Theorem 1]{KW}, by \eqref{deltaF}, \eqref{eeqqppert} and \cite[Lemma 2.6]{RS2} we find some $K\geq1$ such that $A_{j}\in\mathcal{R}(K,\theta)$ for each $j\in\{1,\dots,N\}$.

Take $\lambda\in S_{\theta}$, $u\in X_{1}^{0}$ and $f\in X_{0}^{0}$ such that
$$
(c_{1}-\underline{\Delta}_{\mathfrak{e},0}+\lambda)u=f.
$$
By multiplying with $\phi_{j}$ we obtain
$$
\phi_{j}f=(c_{1}-\underline{\Delta}_{\mathfrak{e},0}+\lambda)\phi_{j}u+[\underline{\Delta}_{\mathfrak{e},0},\phi_{j}]u,
$$
i.e.
$$
\phi_{j}f=(A_{j}+\lambda)\phi_{j}u+[\phi_{j},A_{j}]u.
$$
By applying the resolvent of $A_{j}$ we get
$$
(A_{j}+\lambda)^{-1}\phi_{j}f=\phi_{j}u+(A_{j}+\lambda)^{-1}[\phi_{j},A_{j}]u,
$$
where by multiplying with $\psi_{j}$ and summing up we find
\begin{equation}\label{lakjd}
\sum_{j=1}^{N}\psi_{j}(A_{j}+\underline{c}_{1}+\lambda)^{-1}\phi_{j}f=u+\sum_{j=1}^{N}\psi_{j}(A_{j}+\underline{c}_{1}+\lambda)^{-1}[\phi_{j},A_{j}]u,
\end{equation}
for any $\underline{c}_{1}>0$. By Corollary \ref{sharpmixder} we have that
\begin{equation}\label{domemb}
E_{1}^{0}\hookrightarrow H_{p}^{2\rho_{1}}(\mathbb{R}^{n};\mathcal{H}_{p}^{s+2\rho_{2},\gamma+2\rho_{2}}(\mathbb{B})\oplus\mathbb{C}_{\omega}),
\end{equation}
for all $0<\rho_{1}<1-\rho_{2}<1$. Let $\varepsilon>0$ be sufficiently small, and consider the following two choices for $\rho_{1}$, $\rho_{2}$:
\begin{equation}\label{phochoices}
(\rho_{1},\rho_{2})=(1/2+\varepsilon,\varepsilon) \quad \text{and} \quad (\rho_{1},\rho_{2})=(\varepsilon,1/2+\varepsilon).
\end{equation}
For each $j\in \{1,\dots,N\}$, by \eqref{deltaF}, $[\phi_{j},A_{j}]$ is sum of a first order differential operator with smooth coefficients on $\mathbb{R}^{n}$ and a smooth in $z\in\mathbb{V}$ family of first order differential operators with smooth coefficients on $\mathcal{B}$. Therefore, by \eqref{domemb} with the choices \eqref{phochoices}, for each $j\in \{1,\dots,N\}$, the map 
\begin{equation}\label{bndmap}
[\phi_{j},A_{j}]: X_{1}^{0} \rightarrow H_{p}^{2\varepsilon}(\mathbb{R}^{n};\mathcal{H}_{p}^{s+2\varepsilon,\gamma+2\varepsilon}(\mathbb{B})\oplus\mathbb{C}_{\omega})
\end{equation}
is bounded. On the other hand, for each $\rho\in(0,1)$, by Lemma \ref{Hprop} (iii) we have
$$
H_{p}^{2\rho}(\mathbb{R}^{n};\mathcal{H}_{p}^{s+2\rho+\varepsilon,\gamma+2\rho+\varepsilon}(\mathbb{B})\oplus\mathbb{C}_{\omega})\hookrightarrow [E_{0}^{0},H_{p}^{2}(\mathbb{R}^{n};\mathcal{H}_{p}^{s+2,\gamma+2}(\mathbb{B})\oplus\mathbb{C}_{\omega})]_{\rho} \hookrightarrow [E_{0}^{0},E_{1}^{0}]_{\rho}.
$$
By choosing $\rho=\varepsilon/2$ and taking into account \cite[(I.2.5.2) and (I.2.9.6)]{Am}, from \eqref{bndmap} we deduce that there exists an $a\in(0,1/2)$ such that, for each $j\in \{1,\dots,N\}$, the map
\begin{equation}\label{bndfracmap}
[\phi_{j},A_{j}]: X_{1}^{0}\rightarrow \mathcal{D}(A_{j}^{a})
\end{equation}
is bounded. Therefore, by writing
$$
(A_{j}+\underline{c}_{1}+\lambda)^{-1}[\phi_{j},A_{j}]=A_{j}^{-1}A_{j}(A_{j}+\underline{c}_{1}+\lambda)^{-1}A_{j}^{-a}A_{j}^{a}[\phi_{j},A_{j}]
$$
in \eqref{lakjd} and using Remark \ref{TaLnLem}, we can make the $\mathcal{L}(X_{1}^{0})$-norm of the second term on the right-hand side of \eqref{lakjd} arbitrary small, uniformly in $\lambda$, by choosing $\underline{c}_{1}$ sufficiently large. Hence, by denoting $c_{2}=c_{1}+\underline{c}_{1}$ and choosing $\underline{c}_{1}>0$ large enough, we deduce that $c_{2}-\underline{\Delta}_{\mathfrak{e},0}+\lambda$ has a left inverse $L$ that belongs to $\mathcal{L}(X_{0}^{0},X_{1}^{0})$.

By \eqref{lakjd} we have
\begin{eqnarray}\nonumber
(c_{2}-\underline{\Delta}_{\mathfrak{e},0}+\lambda)L&=&(c_{2}-\underline{\Delta}_{\mathfrak{e},0}+\lambda)\sum_{j=1}^{N}\psi_{j}(A_{j}+\underline{c}_{1}+\lambda)^{-1}(\phi_{j}+[A_{j},\phi_{j}]L)\\\nonumber
&=&\sum_{j=1}^{N}[\psi_{j},\underline{\Delta}_{\mathfrak{e},0}](A_{j}+\underline{c}_{1}+\lambda)^{-1}(\phi_{j}+[A_{j},\phi_{j}]L)\\\nonumber
&&+\sum_{j=1}^{N}\psi_{j}(c_{2}-\underline{\Delta}_{\mathfrak{e},0}+\lambda)(A_{j}+\underline{c}_{1}+\lambda)^{-1}(\phi_{j}+[A_{j},\phi_{j}]L)\\\label{rightinvrs}
&=&I+\sum_{j=1}^{N}[\psi_{j},\underline{\Delta}_{\mathfrak{e},0}](A_{j}+\underline{c}_{1}+\lambda)^{-1}(\phi_{j}+[A_{j},\phi_{j}]L),
\end{eqnarray}
where we have used the fact that 
$$
\sum_{j=1}^{N}[A_{j},\phi_{j}]=\sum_{j=1}^{N}[\phi_{j},\underline{\Delta}_{\mathfrak{e},0}]=0. 
$$
For each $b\in(0,\rho)$ and $j\in\{1,\dots,N\}$, by \cite[(I.2.5.2) and (I.2.9.6)]{Am}, Lemma \ref{Hprop} (iii) and \eqref{domemb}, we have
\begin{eqnarray*}
\lefteqn{\mathcal{D}(A_{j}^{b})\hookrightarrow [E_{0}^{0},E_{1}^{0}]_{\rho} \hookrightarrow [E_{0}^{0}, H_{q}^{2\rho_{1}}(\mathbb{R}^{n};\mathcal{H}_{p}^{s+2\rho_{2},\gamma+2\rho_{2}}(\mathbb{B})\oplus\mathbb{C}_{\omega})]_{\rho}}\\
&&\hookrightarrow H_{q}^{2\rho\rho_{1}-\varepsilon/2}(\mathbb{R}^{n};\mathcal{H}_{p}^{s+2\rho\rho_{2}-\varepsilon/2,\gamma+2\rho\rho_{2}-\varepsilon/2}(\mathbb{B})\oplus\mathbb{C}_{\omega}).
\end{eqnarray*}
Hence, by choosing $\rho_{1}$, $\rho_{2}$ according to \eqref{phochoices}, setting $\rho=1-\varepsilon$ and using the expression of $[\psi_{j},\underline{\Delta}_{\mathfrak{e},0}]$ due to \eqref{deltaF}, we obtain that, for each $j\in\{1,\dots,N\}$, $[\psi_{j},\underline{\Delta}_{\mathfrak{e},0}]A_{j}^{-b}$ is a bounded operator on $E_{0}^{0}$; in particular 
$$
[\psi_{j},\underline{\Delta}_{\mathfrak{e},0}]A_{j}^{-b}: E_{0}^{0}\rightarrow H_{q}^{\varepsilon/4}(\mathbb{R}^{n};\mathcal{H}_{p}^{s+\varepsilon/4,\gamma+\varepsilon/4}(\mathbb{B})\oplus\mathbb{C}_{\omega}), \quad j\in\{1,\dots,N\}.
$$ 
As a consequence, by writing
$$
[\psi_{j},\underline{\Delta}_{\mathfrak{e},0}](A_{j}+\underline{c}_{1}+\lambda)^{-1}=[\psi_{j},\underline{\Delta}_{\mathfrak{e},0}]A_{j}^{-b}A_{j}^{b}(A_{j}+\underline{c}_{1}+\lambda)^{-1}
$$
and using Remark \ref{TaLnLem}, after taking $\underline{c}_{1}>0$ sufficiently large, we can make the $\mathcal{L}(X_{0}^{0})$-norm of the second term on the right-hand side of \eqref{rightinvrs} arbitrary small. Thus, $c_{2}-\underline{\Delta}_{\mathfrak{e},0}+\lambda$ has also a right inverse, that belongs to $\mathcal{L}(X_{0}^{0}, X_{1}^{0})$.

By \eqref{lakjd} we have
$$
(c_{2}-\underline{\Delta}_{\mathfrak{e},0}+\lambda)^{-1}=\sum_{k=0}^{\infty}Q^{k}(\lambda)R(\lambda), \quad \lambda\in S_{\theta},
$$
where
$$
Q(\lambda)=\sum_{j=1}^{N}\psi_{j}(A_{j}+\underline{c}_{1}+\lambda)^{-1}[A_{j},\phi_{j}] \quad \text{and} \quad R(\lambda)=\sum_{j=1}^{N}\psi_{j}(A_{j}+\underline{c}_{1}+\lambda)^{-1}\phi_{j}.
$$
Let $\lambda_{1},\dots,\lambda_{\eta}\in S_{\theta}\backslash\{0\}$, $u_{1},\dots,u_{\eta}\in X_{0}^{0}$, $\eta\in\mathbb{N}$, and let $\{\epsilon_{k}\}_{k\in\mathbb{N}}$ be the sequence of Rademacher functions. We have
\begin{equation}\label{rsecestfin}
\|\sum_{i=1}^{\eta}\epsilon_{i}\lambda_{i}(c_{2}-\underline{\Delta}_{\mathfrak{e},0}+\lambda_{i})^{-1}u_{i}\|_{L^{2}(0,1;X_{0}^{0})}\leq\sum_{k=0}^{\infty}\|\sum_{i=1}^{\eta}\epsilon_{i}\lambda_{i}Q^{k}(\lambda_{i})R(\lambda_{i})u_{i}\|_{L^{2}(0,1;X_{0}^{0})}.
\end{equation}
By \cite[Lemma 2.6]{RS2}, the $0$-th term of the infinite sum on the right-hand side of \eqref{rsecestfin} is
\begin{equation}\label{rserest1}
\leq C_{0}^{2}NK(1+2/S(\theta))\|\sum_{i=1}^{\eta}\epsilon_{i}u_{i}\|_{L^{2}(0,1;X_{0}^{0})},
\end{equation}
where $S(\theta)$ is defined in \eqref{Stheta} and
$$
C_{0}=\max_{j\in\{1,\dots,N\}}\{\|\psi_{j}\cdot\|_{\mathcal{L}(E_{0}^{0},X_{0}^{0})}, \|\phi_{j}\cdot\|_{\mathcal{L}(X_{0}^{0},E_{0}^{0})}\}.
$$
On the other hand, for each $k\in\mathbb{N}$ and $i\in\{1,\dots,\eta\}$, the term $Q^{k}(\lambda_{i})R(\lambda_{i})$ in \eqref{rsecestfin} is a sum of $N^{k+1}$ terms of the form
\begin{eqnarray*}
\lefteqn{\psi_{j_{1}}(A_{j_{1}}+\underline{c}_{1}+\lambda_{i})^{-1}[A_{j_{1}},\phi_{j_{1}}]}\\
&&\psi_{j_{2}}A_{j_{2}}^{-b}A_{j_{2}}^{b}(A_{j_{2}}+\underline{c}_{1})^{-1}(A_{j_{2}}+\underline{c}_{1})(A_{j_{2}}+\underline{c}_{1}+\lambda_{i})^{-1}[A_{j_{2}},\phi_{j_{2}}]\\
&&\dots\psi_{j_{k}}A_{j_{k}}^{-b}A_{j_{k}}^{b}(A_{j_{k}}+\underline{c}_{1})^{-1}(A_{j_{k}}+\underline{c}_{1})(A_{j_{k}}+\underline{c}_{1}+\lambda_{i})^{-1}[A_{j_{k}},\phi_{j_{k}}]\\
&&\psi_{\ell}A_{\ell}^{-1}A_{\ell}(A_{\ell}+\underline{c}_{1})^{-1}(A_{\ell}+\underline{c}_{1})(A_{\ell}+\underline{c}_{1}+\lambda_{i})^{-1}\phi_{\ell},
\end{eqnarray*}
where $j_{1},\dots,j_{k},\ell\in \{1,\dots,N\}$. By Remark \ref{TaLnLem}, we can choose $\underline{c}_{1}>0$ sufficiently large such that
$$
\|A_{\ell}^{b}(A_{\ell}+\underline{c}_{1})^{-1}\|_{\mathcal{L}(E_{0}^{0})}<\varepsilon,
$$ 
for any $\ell\in\{1,\dots,N\}$. Moreover, denote
$$
C_{1}=\sup_{j,\ell\in\{1,\dots,N\}}\|[A_{j},\phi_{j}]\psi_{\ell}A_{\ell}^{-1}\|_{\mathcal{L}(E_{0}^{0})} \quad \text{and} \quad C_{2}=\sup_{j,\ell\in\{1,\dots,N\}}\|[A_{j},\phi_{j}]\psi_{\ell}A_{\ell}^{-b}\|_{\mathcal{L}(E_{0}^{0})}.
$$
By using successively the $R$-sectoriality of $A_{\ell}$ and \cite[Lemma 2.6]{RS2}, the $k$-th term of the infinite sum on the right-hand side of \eqref{rsecestfin} is 
\begin{equation}\label{rserest2}
\leq (K+1)C_{1}C_{0}^{2}(\varepsilon C_{2})^{k-1}(N(K(1+2/S(\theta))+1))^{k+1}\|\sum_{i=1}^{\eta}\epsilon_{i}u_{i}\|_{L^{2}(0,1;X_{0}^{0})}.
\end{equation}
The result follows by \eqref{rsecestfin}, \eqref{rserest1} and \eqref{rserest2}, after taking $\varepsilon>0$ sufficiently small. \\
{\em Case $r\in\mathbb{N}_{0}$}. We proceed by induction. Assume that the result holds for some $r\in\mathbb{N}_{0}$. In particular, there exists a $c>0$ such that $c-\underline{\Delta}_{\mathfrak{e},r}\in\mathcal{R}(\theta)$. Let $d\in(1,\infty)$, $T>0$, $h\in L^{d}(0,T;X_{0}^{r+1})$ and consider the problem 
\begin{eqnarray}\label{aqlpind1}
v_{t}(t)+(c-\Delta_{\mathfrak{e}})v(t)&=&h(t), \quad t\in(0,T),\\\label{aqlpind2}
v(0)&=&0.
\end{eqnarray}
Since by Lemma \ref{propF} (i) the space $X_{0}^{r}$ is UMD, due to Theorem \ref{KaWeTh} there exists a unique 
\begin{equation}\label{regofv}
v\in H_{d}^{1}(0,T;X_{0}^{r})\cap L^{d}(0,T;X_{1}^{r})
\end{equation}
solving \eqref{aqlpind1}-\eqref{aqlpind2}. 

For each $j\in\{1,\dots,N\}$, let $(s_{1},\dots,s_{n})$ be local coordinates in $U_{2\tau}(z_{j})$ and denote $M_{j,k}=\phi_{j}\partial_{s_{k}}$, $(j,k)\in\{1,\dots,N\}\times \{1,\dots,n\}$. By applying $M_{j,k}$ to \eqref{aqlpind1}-\eqref{aqlpind2}, we obtain
\begin{eqnarray}\label{aqlpifff}
\hspace{30pt}(M_{j,k}v)_{t}(t)+(c-\Delta_{\mathfrak{e}})(M_{j,k}v)(t)&=&[M_{j,k},\Delta_{\mathfrak{e}}]v(t)+(M_{j,k}h)(t), \quad t\in(0,T),\\\label{aqlphhhj}
(M_{j,k}v)(0)&=&0.
\end{eqnarray}
By \eqref{deltaF}, in local coordinates $(z,x,y)\in \mathcal{V}\times(0,1]\times\mathcal{Y}$, we have
$$
[M_{j,k},\Delta_{\mathfrak{e}}]=[M_{j,k},\Delta_{\mathfrak{m}}]+x^{-2}[M_{j,k},\frac{x\partial_{x}\det[\mathfrak{h}(z,x)]}{2\det[\mathfrak{h}(z,x)]}(x\partial_{x})+\Delta_{\mathfrak{h}(z,x)}]+[M_{j,k},D],
$$ 
where in the case of $\nu=1$, the middle term on the right-hand side of the above equation is zero. Therefore, $[M_{j,k},\Delta_{\mathfrak{e}}]$ can be written as a sum of a second order differential operator with smooth coefficients on $\mathbb{R}^{n}$ and a smooth in $z\in\mathbb{R}^{n}$ family of second order cone differential operators on $\mathcal{B}$. Hence, by \eqref{regofv} we obtain
$$
[M_{j,k},\Delta_{\mathfrak{e}}]v\in L^{d}(0,T;X_{0}^{r}).
$$
Thus, the right hand side of \eqref{aqlpifff} belongs to $ L^{q}(0,T;X_{0}^{r})$ and, due to the assumption, we conclude that
$$
M_{j,k}v\in H_{d}^{1}(0,T;X_{0}^{r})\cap L^{d}(0,T;X_{1}^{r}).
$$
Combined with \eqref{regofv} we find that
$$
v\in H_{d}^{1}(0,T;X_{0}^{r+1})\cap L^{d}(0,T;X_{1}^{r+1}),
$$
i.e. $c-\underline{\Delta}_{\mathfrak{e},r+1}$ has maximal $L^{d}$-regularity. Then, the result follows by the characterization \cite[Theorem 4.2]{Weis}.\\
{\em Case $r\geq0$}. We proceed by interpolation. First, concerning the solution \eqref{regofv} of \eqref{aqlpind1}-\eqref{aqlpind2}, we recall the following approach. Denote by $B_{r}$ the operator $\partial_{t}$ in $L^{d}(0,T;X_{0}^{r})$ with domain 
$$
\mathcal{D}(B_{r})=\{u\in H_{d}^{1}(0,T;X_{0}^{r})\, |\, u(0)=0\}.
$$ 
Since $X_{0}^{r}$ is UMD, by \cite[Theorem 8.5.8]{Ha} for each $\phi\in(0,\pi/2)$ we have that $B_{r}\in\mathcal{H}^{\infty}(\phi)$. Since $B_{r}$ and $\Lambda_{r}=c-\underline{\Delta}_{\mathfrak{e},r}$ are resolvent commuting in the sense of \cite[(III.4.9.1)]{Am}, by \cite[Theorem 6.3]{KaW} the inverse $(B_{r}+\Lambda_{r})^{-1}$ exists as a bounded map from $L^{d}(0,T;X_{0}^{r})$ to $H_{d}^{1}(0,T;X_{0}^{r})\cap L^{d}(0,T;X_{1}^{r})$. In particular
$$
v=(B_{r}+\Lambda_{r})^{-1}h.
$$
Let $r=k+\xi$, where $k\in\mathbb{N}_{0}$ and $\xi\in(0,1)$. We have that 
$$
(B_{k}+\Lambda_{k})^{-1}\in \mathcal{L}(L^{d}(0,T;X_{0}^{k}),H_{d}^{1}(0,T;X_{0}^{k})\cap L^{d}(0,T;X_{1}^{k}))
$$
and
$$
(B_{k}+\Lambda_{k})^{-1}|_{L^{d}(0,T;X_{0}^{k+1})}\in \mathcal{L}(L^{d}(0,T;X_{0}^{k+1}),H_{d}^{1}(0,T;X_{0}^{k+1})\cap L^{d}(0,T;X_{1}^{k+1})),
$$
where we have used uniqueness of solution for \eqref{aqlpind1}-\eqref{aqlpind2}. Therefore, by \cite[Theorem 2.6]{Lunar18}, we also have 
\begin{equation}\label{invermapint}
(B_{k}+\Lambda_{k})^{-1}\in \mathcal{L}(Y_{0},Y_{1})
\end{equation}
where
$$
Y_{0}=[L^{d}(0,T;X_{0}^{k}),L^{d}(0,T;X_{0}^{k+1})]_{\xi} 
$$
and
$$ 
Y_{1}=[H_{d}^{1}(0,T;X_{0}^{k})\cap L^{d}(0,T;X_{1}^{k}),H_{d}^{1}(0,T;X_{0}^{k+1})\cap L^{d}(0,T;X_{1}^{k+1})]_{\xi}.
$$
Clearly,
\begin{equation}\label{Y1embed}
Y_{1}\hookrightarrow [H_{d}^{1}(0,T;X_{0}^{k}),H_{d}^{1}(0,T;X_{0}^{k+1})]_{\xi}\cap [L^{d}(0,T;X_{1}^{k}),L^{d}(0,T;X_{1}^{k+1})]_{\xi}.
\end{equation}
Let $\ell,i\in\{0,1\}$ and $j\in\{k,k+1\}$. The restriction operator
$$
R_{T}:H_{d}^{\ell}(\mathbb{R};X_{i}^{j})\rightarrow H_{d}^{\ell}(0,T;X_{i}^{j}), \quad \text{defined by} \quad u\mapsto u|_{[0,T]},
$$
is a bounded map. Hence, by Lemma \ref{propF} (i), \cite[Theorem VII.4.5.5]{Am2} and \cite[Theorem 2.6]{Lunar18}, it induces a bounded map 
\begin{equation}\label{RTdeff}
R_{T}:H_{d}^{\ell}(\mathbb{R};[X_{i}^{k},X_{i}^{k+1}]_{\xi})\rightarrow [H_{d}^{\ell}(0,T;X_{i}^{k}),H_{d}^{\ell}(0,T;X_{i}^{k+1})]_{\xi}.
\end{equation}
On the other hand, let $d_{T}\in C^{\infty}(\mathbb{R};[0,1])$ such that $d_{T}=1$ on $[0,T]$ and $d_{T}=0$ outside $[-\varepsilon,T+\varepsilon]$, for some $\varepsilon\in(0,1)$. Then, the multiplication operator
$$
d_{T}\cdot :H_{d}^{\ell}(-1,T+1;X_{i}^{j}) \rightarrow H_{d}^{\ell}(\mathbb{R};X_{i}^{j}), \quad \text{defined by} \quad u\mapsto d_{T}u,
$$
is bounded. By Lemma \ref{propF} (i), \cite[Theorem VII.4.5.5]{Am2} and \cite[Theorem 2.6]{Lunar18}, it induces a bounded map
$$
d_{T}:[H_{d}^{\ell}(-1,T+1;X_{i}^{k}),H_{d}^{\ell}(-1,T+1;X_{i}^{k+1})]_{\xi}\rightarrow H_{d}^{\ell}(\mathbb{R};[X_{i}^{k},X_{i}^{k+1}]_{\xi}).
$$
If $u\in [H_{d}^{\ell}(0,T;X_{i}^{k}),H_{d}^{\ell}(0,T;X_{i}^{k+1})]_{\xi}$ denote by $E_{T}u$ the extension of $u$ from $[0,T]$ to $[-1,T+1]$ by zero if $\ell=0$ and by $\lim_{t\rightarrow0^{+}}u(t)$ on the left and $\lim_{t\rightarrow T^{-}}u(t)$ on the right of $[0,T]$ if $\ell=1$. By writing $u=R_{T}d_{T}E_{T}u$, we conclude that $R_{T}$ in \eqref{RTdeff} is a surjection. Consequently,
\begin{equation}\label{0Tint}
[H_{d}^{\ell}(0,T;X_{i}^{k}),H_{d}^{\ell}(0,T;X_{i}^{k+1})]_{\xi}=H_{d}^{\ell}(0,T;[X_{i}^{k},X_{i}^{k+1}]_{\xi}).
\end{equation}
Moreover, by Lemma \ref{propF} (iii) we have
\begin{equation}\label{intXsp1}
[X_{0}^{k},X_{0}^{k+1}]_{\xi}=X_{0}^{k+\xi}
\end{equation}
and
\begin{eqnarray}\nonumber
[X_{1}^{k},X_{1}^{k+1}]_{\xi}&\hookrightarrow& [\mathcal{H}_{q,p,\gamma}^{k+2,s}(\mathbb{F}),\mathcal{H}_{q,p,\gamma}^{k+3,s}(\mathbb{F})]_{\xi}\cap [\mathcal{H}_{q,p,\gamma+2}^{k,s+2}(\mathbb{F})_{\oplus\mathbb{C}_{\omega}},\mathcal{H}_{q,p,\gamma+2}^{k+1,s+2}(\mathbb{F})_{\oplus\mathbb{C}_{\omega}}]_{\xi}\\\label{intXsp2}
&=&\mathcal{H}_{q,p,\gamma}^{k+2+\xi,s}(\mathbb{F})\cap \mathcal{H}_{q,p,\gamma+2}^{k+\xi,s+2}(\mathbb{F})_{\oplus\mathbb{C}_{\omega}}=X_{1}^{k+\xi}.
\end{eqnarray}
Thus, by \eqref{invermapint}, \eqref{Y1embed}, \eqref{0Tint}, \eqref{intXsp1} and \eqref{intXsp2} we obtain 
$$
(B_{k}+\Lambda_{k})^{-1}\in \mathcal{L}(L^{d}(0,T;X_{0}^{k+\xi}), H_{d}^{1}(0,T;X_{0}^{k+\xi})\cap L^{d}(0,T;X_{1}^{k+\xi})),
$$
which completes the proof. \mbox{\ } \hfill $\Box$

\section{Application: The porous medium equation on manifolds with edges}

In this section we will prove Theorem \ref{pmeonF}. Existence, uniqueness and regularity for the PME will follow by Theorem \ref{ClementLi}. Setting $u^{m}=v$ in \eqref{PME1}-\eqref{PME2} we obtain
\begin{eqnarray}\label{PME3}
\hspace{7mm} v'(t)-mv^{\frac{m-1}{m}}\Delta_{\mathfrak{e}}v(t) &=& \p(v(t),t),\quad t\in(0,T],\\\label{PME4}
\hspace{7mm} v(0)&=&u_{0}^{m},
\end{eqnarray}
where $\p(v,t)=mv^{(m-1)/m}f(v^{1/m},t)$. We start by showing maximal $L^{d}$-regularity for the linearization of \eqref{PME3}-\eqref{PME4}. For any $x\geq0$ denote 
$$
[x]=\bigg\{\begin{array}{lll} x & \text{if} & x\in \mathbb{N}_{0} \\ 
\min\{k\, |\, k\in\mathbb{N} \,\,\text{and} \,\, x<k\} & \text{if} & x\notin\mathbb{N}_{0}.
\end{array}
$$

\begin{theorem}\label{maxreglinterm}
Let $p,q\in(1,\infty)$, $r,s\geq0$, $\gamma$ be as in \eqref{gamma2}, $\delta>0$ and 
$$
u\in \mathcal{H}_{q,p,\frac{\nu}{2}+\delta}^{[r]+1+\frac{n}{q}+\delta,[s]+\frac{\nu}{p}+\delta}(\mathbb{F})_{\oplus \mathbb{C}_{\omega}} 
$$ 
satisfying $u\geq \alpha >0$, for some constant $\alpha>0$. Then, there exist $\theta_{0}\in(\pi/2,\pi)$ and $c_{0}>0$ such that $c_{0}-u\underline{\Delta}_{\mathfrak{e}}\in\mathcal{R}(\theta_{0})$, where $\underline{\Delta}_{\mathfrak{e}}$ is the Laplacian \eqref{DeltaF}.
\end{theorem}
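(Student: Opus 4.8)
I would prove Theorem \ref{maxreglinterm} by treating $u\underline{\Delta}_{\mathbb{F}}$ as a multiplicative perturbation of the $R$-sectorial operator $\underline{\Delta}_{\mathbb{F}}$ from Theorem \ref{thsec33}. The first move is to check that $u$ sits in a nice multiplier class: the Sobolev exponents $[r]+1+\tfrac np+\delta$, $[s]+\tfrac\nu p+\delta$ and the weight $\tfrac\nu2+\delta$ are exactly arranged so that, by Lemma \ref{ProbHR}(v) (applied locally on each chart $U_j$, $j\le N_0$) together with the embedding $\mathcal{H}_{p,\frac{\nu}{2}+\delta}^{[r]+1+\frac np+\delta,[s]+\frac\nu p+\delta}(\mathbb{F})_{\oplus\mathbb{C}_\omega}\hookrightarrow C(\mathbb{F})$ from Lemma \ref{propF}(v), multiplication by $u$ is a bounded map on $\mathcal{H}_{p,\gamma}^{r,s}(\mathbb{F})$ and on each of the spaces $\mathcal{H}_{p,\gamma}^{r+2,s}(\mathbb{F})$, $\mathcal{H}_{p,\gamma+2}^{r,s+2}(\mathbb{F})_{\oplus\mathbb{C}_\omega}$; here the extra derivative and the positivity $u\ge\alpha$ ensure $1/u$ lies in the same class (Lemma \ref{ProbHR}(iv), Lemma \ref{propF}(vi)), so that the commutator $[u,\Delta_{\mathbb{F}}]=u\Delta_{\mathbb{F}}-\Delta_{\mathbb{F}}u$ is \emph{lower order} relative to $\underline{\Delta}_{\mathbb{F}}$.

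Next I would run a localized freezing-of-coefficients argument in the spirit of the proof of Theorem \ref{t1} and of the proof of Theorem \ref{thsec33}. Using the partition of unity $\{\phi_j\}$, write $u=u(w_j)+(u-u(w_j))$ on each chart; the constant piece $u(w_j)$ times the local model operator $\underline{\Delta}_{\mathbb{V},r}(w_j)+\underline{D}_r(w_j)+\underline{\Delta}_{z_j,s}$ is, after a trivial positive rescaling (multiplying an $R$-sectorial operator by a positive constant preserves $\mathcal{R}(\theta)$, as already used in the proof of Theorem \ref{mxtderth}), still in $\mathcal{R}(\theta)$ with $R$-bound controlled uniformly in $w_j$ by $\alpha$ and $\|u\|_\infty$. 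The small variable piece $u-u(w_j)$ is made small in the relevant operator norm by shrinking the chart radius $\tau$, exactly as in Step 4 of the proof of Theorem \ref{t1}; the Neumann-series / $R$-sectoriality perturbation bookkeeping (\cite[Theorem 1]{KW}, \cite[Lemma 2.6]{RS2}) then gives local $R$-sectorial approximations $A_j$ for $c_0-u\Delta_{\mathbb{F}}$. Patching them together with the commutator-smallness trick ($[A,\phi_j]A_j^{-b}$ bounded, made small by enlarging a spectral shift, via the mixed-derivative embedding \eqref{mixtder1}--\eqref{domemb} and \cite[Lemma 2.3.3]{Tan}) reproduces the right- and left-inverse construction and the final $R$-sectoriality estimate of the proof of Theorem \ref{thsec33}, now for $c_0-u\underline{\Delta}_{\mathbb{F}}$.

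I expect the main obstacle to be the interplay between the multiplicative coefficient $u$ and the \emph{singular} part of the operator near $\partial\mathcal{F}$: one must confirm that $u\Delta_z$ on the model cone is still cone-elliptic with the same conormal symbol structure — which it is, since $u$ is continuous up to the edge and bounded below, so multiplication by $u$ does not change the rescaled symbol $\widetilde\sigma_\psi^2$ nor the poles $q_{z,j}^\pm$ — and that the domain $\mathcal{D}(\underline{\Delta}_{\mathbb{F}})=\mathcal{H}_{p,\gamma}^{r+2,s}(\mathbb{F})\cap\mathcal{H}_{p,\gamma+2}^{r,s+2}(\mathbb{F})_{\oplus\mathbb{C}_\omega}$ is left invariant by both $u\cdot$ and $u^{-1}\cdot$, so that $u\underline{\Delta}_{\mathbb{F}}$ has exactly this domain and is closed. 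The required mapping properties of multiplication on the $\oplus\mathbb{C}_\omega$-components (constants near the edge are multiplied into the $\mathcal{H}_p^{s+2,\gamma+2}$-part plus a residual constant) are precisely what Lemma \ref{ProbHR}(iv)--(v) and Lemma \ref{propF}(vi)--(vii) are designed to supply, so the argument should close; the derivative surplus $+1+\tfrac np+\delta$ in the hypothesis on $u$ is exactly the cushion needed to absorb the order-$1$ commutator into the fractional-power estimates. Finally the cases $r,s\ge0$ reduce to $r,s\in\mathbb{N}_0$ by the same induction-plus-interpolation scheme (Lemma \ref{propF}(iii), \eqref{0Tint}, \cite[Theorem 2.6]{Lunar18}) used at the end of the proof of Theorem \ref{thsec33}.
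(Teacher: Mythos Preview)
Your plan is correct in spirit and matches the paper's proof closely: freezing of the coefficient $u$, local $R$-sectorial approximations, patching via a partition of unity with commutator-smallness, then induction in $r,s\in\mathbb{N}_0$ followed by interpolation for general $r,s\ge0$. Two points are worth noting.

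First, you propose to freeze \emph{both} $u$ and the coefficients of $\Delta_{\mathbb{F}}$, reverting to the local model operators $\underline{\Delta}_{\mathbb{V},r}(w_j)+\underline{D}_r(w_j)+\underline{\Delta}_{z_j,s}$. The paper takes a shorter route: since Theorem~\ref{thsec33} already gives $c-\underline{\Delta}_{\mathbb{F}}\in\mathcal{R}(\theta)$ globally, one only needs to freeze $u$. A direct computation with Kahane's contraction principle shows that $c_1-u(\widetilde w)\underline{\Delta}_{\mathbb{F}}\in\mathcal{R}(\theta)$ for any $\widetilde w\in\mathbb{F}$, with $R$-sectorial bound uniform in $\widetilde w$ (controlled by $\alpha$ and $\|u\|_\infty$). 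The local approximations are then $c_1-u_j\underline{\Delta}_{\mathbb{F}}$ where $u_j$ is a smooth interpolation between $u$ and the constant $u(w_j)$; only $(u-u(w_j))\underline{\Delta}_{\mathbb{F}}$ must be made small by shrinking $\tau$. This avoids re-running the proof of Theorem~\ref{thsec33} with a coefficient attached.

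Second, the observation that $[u,\Delta_{\mathbb{F}}]$ is lower order, while true, is not the operative mechanism. The relevant commutators in both the left/right-inverse construction and the $R$-sectoriality estimate are $u[\varphi_j,\Delta_{\mathbb{F}}]$ and $u[\psi_j,\Delta_{\mathbb{F}}]$ with the cut-off functions, handled exactly as in the proof of Theorem~\ref{thsec33} via the fractional-power embeddings and \cite[Lemma~2.3.3]{Tan}; the multiplier bounds from Lemma~\ref{propF}(vii) are what allow $u$ to pass through these estimates harmlessly. The induction step for $r,s\in\mathbb{N}_0$ likewise uses the commutator $[W_{j,k},u\Delta_{\mathbb{F}}]=\varphi_j(\partial_{\zeta_k}u)\Delta_{\mathbb{F}}+u[\varphi_j\partial_{\zeta_k},\Delta_{\mathbb{F}}]$, and here the extra derivative in the hypothesis on $u$ (the ``$+1$'' in $[r]+1+\tfrac{n}{p}+\delta$) is exactly what makes $\varphi_j(\partial_{\zeta_k}u)$ a multiplier on $X_0^{r,s}$ via Lemma~\ref{propF}(vii); this is the precise role of that surplus, rather than absorbing an order-$1$ commutator into fractional powers as you suggest.
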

\begin{proof}
Note that, due to Lemma \ref{propF} (v), we have $u\in C(\mathbb{F})$, and therefore the assumption $u\geq \alpha$ makes sense. Denote $X_{0}^{r,s}=\mathcal{H}_{q,p,\gamma}^{r,s}(\mathbb{F})$, $X_{1}^{r,s}=\mathcal{H}_{q,p,\gamma}^{r+2,s}(\mathbb{F})\cap \mathcal{H}_{q,p,\gamma+2}^{r,s+2}(\mathbb{F})_{\oplus\mathbb{C}_{\omega}}$ and the Laplacian $\underline{\Delta}_{\mathfrak{e}}$ defined in \eqref{DeltaF} by $\Delta_{r,s}$. We start with the following observation. Let $\theta\in [0,\pi)$, $\lambda_{1},\dots,\lambda_{\xi}\in S_{\theta}\backslash\{0\}$, $\xi\in\mathbb{N}$, $x_{1},\dots,x_{\xi}\in X_{0}^{r,s}$ and let $\{\epsilon_{j}\}_{j\in\mathbb{N}}$ be the sequence of the Rademacher functions. For any $\widetilde{w}\in \mathbb{F}$, by Theorem \ref{thsec33} and Kahane's contraction principle \cite[Proposition 2.5]{KW1}, there exists a $C_{0}\geq1$ such that 
\begin{eqnarray*}
\lefteqn{\|\sum_{j=1}^{\xi}\epsilon_{j}\lambda_{j}(c_{1}-u(\widetilde{w})\Delta_{r,s}+\lambda_{j})^{-1}x_{j}\|_{L^{2}(0,1;X_{0}^{r,s})}}\\
&=&\|\sum_{j=1}^{\xi}\epsilon_{j}\frac{c_{1}-cu(\widetilde{w})+\lambda_{j}}{u(\widetilde{w})}\\
&&\times\Big(c-\Delta_{r,s}+\frac{c_{1}-cu(\widetilde{w})+\lambda_{j}}{u(\widetilde{w})}\Big)^{-1}\frac{\lambda_{j}}{c_{1}-cu(\widetilde{w})+\lambda_{j}}x_{j}\|_{L^{2}(0,1;X_{0}^{r,s})}\\
&\leq&C_{0}\|\sum_{j=1}^{\xi}\epsilon_{j}\frac{\lambda_{j}}{c_{1}-cu(\widetilde{w})+\lambda_{j}}x_{j}\|_{L^{2}(0,1;X_{0}^{r,s})}\\
&\leq& 2C_{0}\sup_{\lambda\in S_{\theta}\backslash\{0\}}\Big(\frac{\lambda}{c_{1}-cu(\widetilde{w})+\lambda}\Big)\|\sum_{j=1}^{\xi}\epsilon_{j}x_{j}\|_{L^{2}(0,1;X_{0}^{r,s})},
\end{eqnarray*}
where $c>0$ is as in Theorem \ref{thsec33} and $c_{1}$ satisfies $c_{1}>c\max_{w\in\mathbb{F}}(u(w))$. This implies that $c_{1}-u(\widetilde{w})\Delta_{r,s}: X_{1}^{r,s}\rightarrow X_{0}^{r,s}$ belongs to $\mathcal{R}(\theta)$ and its $R$-sectorial bound can be chosen independently of $\widetilde{w}\in \mathbb{F}$. Next, we split the proof in several steps according to different values of $r$ and $s$.\\
{\em Case $r=s=0$}. We follow similar steps as in the case $s=0$ in the proof of \cite[Theorem 6.1]{RS2}; however, the adaptation of the proof requires several important modifications, which make the full demonstration necessary. Let $\mathfrak{r}$ be a smooth non-singular Riemannian metric on $\mathcal{F}$ and denote by $U_{\tau}(w)$ the open geodesic ball of radius $\tau>0$ in $(\mathcal{F},\mathfrak{r})$, centered at $w\in \mathcal{F}$. Consider an open cover $\{U_{\tau}(w_{j})\}_{j\in\{1,\dots,N\}}$ of $(\mathcal{F},\mathfrak{r})$ by coordinate charts, where $w_{j}\in\partial\mathcal{F}$, $j\in \{1,\dots,N_{0}\}$, and $w_{j}\in\mathcal{F}\backslash\partial\mathcal{F}$ when $j\in\{N_{0}+1,\dots,N\}$, for some $N,N_{0}\in\mathbb{N}$ with $1<N_{0}<N$. We have $w_{j}=(z_{j},0,y_{j})\in\partial\mathcal{F}$, $j\in \{1,\dots,N_{0}\}$, for some $z_{j}\in \mathcal{V}$ and $y_{j}\in \mathcal{Y}$. Assume that each of $\{\overline{U_{3\tau/2}(w_{j})}\}$, $j\in\{N_{0}+1,\dots,N\}$, does not intersect the boundary of $(\mathcal{F},\mathfrak{r})$. Let $\chi:\mathbb{R}\rightarrow [0,1]$ be a smooth non-increasing function that equals to $1$ on $(-\infty,1/2]$ and $0$ on $[3/4,\infty)$. For any $j\in\{1,\dots,N\}$ define
$$
u_{j}(w)=\chi\Big(\frac{d(w,w_{j})}{2\tau}\Big)u(w)+\big(1-\chi\Big(\frac{d(w,w_{j})}{2\tau}\Big)\big)u(w_{j}), \quad w\in\mathbb{F},
$$
where $d(\cdot,\cdot)$ stands for the geodesic distance in $(\mathcal{F},\mathfrak{r})$, and let 
$$
c_{1}-u_{j}\Delta_{0,0}=c_{1}-u(w_{j})\Delta_{0,0}+(u(w_{j})-u_{j})\Delta_{0,0}: X_{1}^{0,0}\rightarrow X_{0}^{0,0}.
$$
By choosing $\tau$ sufficiently small, and possibly $N$ large enough, we can make the $\mathcal{L}(X_{1}^{0,0},X_{0}^{0,0})$-norm of each $(u(w_{j})-u_{j})\Delta_{0,0}$ arbitrary small. Therefore, by the perturbation result \cite[Theorem 1]{KW}, there exists a $K\geq1$ such that $c_{1}-u_{j}\Delta_{0,0}\in\mathcal{R}(K,\theta)$, for each $j\in\{1,\dots,N\}$.

Let $\{\varphi_{j}\}_{j\in\{1,\dots,N\}}$ be a partition of unity subordinate to $\{U_{\tau}(w_{j})\}_{j\in\{1,\dots,N\}}$, and let $\{\psi_{j}\}_{j\in\{1,\dots,N\}}$ be a collection of smooth functions on $\mathbb{F}$ such that, for each $j\in\{1,\dots,N\}$, $\psi_{j}$ is supported on $U_{\tau}(w_{j})$ and satisfies $\psi_{j}=1$ on $\mathrm{supp}(\varphi_{j})$. We choose each $\varphi_{j}$, $j\in\{1,\dots,N_{0}\}$, to be independent of $(x,y)\in [0,\varepsilon_{0}]\times \mathcal{Y}$, for some $\varepsilon_{0}\in(0,1)$, i.e. to depend only on $z\in\mathcal{V}$ close $\partial\mathcal{F}$. Moreover, let $f\in X_{1}^{0,0}$, $g\in X_{0}^{0,0}$, $\lambda\in S_{\theta}$, $c_{2}>0$ and consider the equation
$$
(\lambda+c_{3}-u\Delta_{0,0})f=g,
$$ 
where $c_{3}=c_{1}+c_{2}$. Following similar steps to those leading up to equation \eqref{lakjd}, we obtain
\begin{equation}\label{Lftinv}
f=\sum_{j=1}^{N}\psi_{j}(\lambda+c_{3}-u_{j}\Delta_{0,0})^{-1}\varphi_{j}g+\sum_{j=1}^{N}\psi_{j}(\lambda+c_{3}-u_{j}\Delta_{0,0})^{-1}u[\varphi_{j},\Delta_{0,0}]f.
\end{equation}

In local coordinates $(z,x,y)\in \mathcal{V}\times[0,1/2]\times \mathcal{Y}$ we have
\begin{eqnarray}\nonumber
[\Delta_{0,0},\varphi_{j}]&=&[\Delta_{\mathfrak{m}},\varphi_{j}]+2(\partial_{x}\varphi_{j})\partial_{x}+(\partial_{x}^{2}\varphi_{j})+[D,\varphi_{j}]\\\label{expcomut}
&&+\Bigg\{\begin{array}{lll} 0 & \text{if}& \nu=1\\
x^{-1}\Big(\nu-1+\frac{x\partial_{x}\det[\mathfrak{h}(z,x)]}{2\det[\mathfrak{h}(z,x)]}\Big)(\partial_{x}\varphi_{j})+x^{-2}[\Delta_{\mathfrak{h}(z,x)},\varphi_{j}] & \text{if}& \nu>1,
\end{array}
\end{eqnarray}
where $\Delta_{\mathfrak{m}}$, $D$, $\mathfrak{h}(z,x)$ and $\Delta_{\mathfrak{h}(z,x)}$ are defined in \eqref{deltaF}-\eqref{Deltaz}. Due to the choice of $\varphi_{j}$, $j\in\{1,\dots,N_{0}\}$, and the fact that $[\Delta_{\mathfrak{m}},\varphi_{j}]$ is a first order differential operator with smooth coefficients on $\mathbb{V}$, by \eqref{domemb} with the choices \eqref{phochoices}, we have that each $[\Delta_{0,0},\varphi_{j}]$ induces a bounded map from $X_{1}^{0,0}$ to $\mathcal{H}_{q,p,\gamma+2\varepsilon}^{2\varepsilon,2\varepsilon}(\mathbb{F})_{\oplus\mathbb{C}_{\omega}}$, for certain $\varepsilon>0$ sufficiently small. Furthermore, by Lemma \ref{propF} (vii), for such $\varepsilon$, multiplication by $u$ induces a bounded map on $\mathcal{H}_{q,p,\gamma+2\varepsilon}^{2\varepsilon,2\varepsilon}(\mathbb{F})_{\oplus\mathbb{C}_{\omega}}$. On the other hand, by Lemma \ref{propF} (iii), for any $\eta\in(0,1)$ we have
$$
\mathcal{H}_{q,p,\gamma+2\eta+\varepsilon}^{2\eta+\varepsilon,2\eta+\varepsilon}(\mathbb{F})_{\oplus\mathbb{C}_{\omega}} \hookrightarrow [\mathcal{H}_{q,p,\gamma}^{0,0}(\mathbb{F}),\mathcal{H}_{q,p,\gamma+2}^{2,2}(\mathbb{F})_{\oplus \mathbb{C}_{\omega}}]_{\eta}\hookrightarrow [X_{0}^{0,0},X_{1}^{0,0}]_{\eta}.
$$
Therefore, by choosing $\eta=\varepsilon/2$ and using \cite[(I.2.5.2) and (I.2.9.6)]{Am}, we deduce that 
$$
u[\Delta_{0,0},\varphi_{j}]\in \mathcal{L}(X_{1}^{0,0},\mathcal{D}((c_{1}-u_{j}\Delta_{0,0})^{a})),\quad j\in\{1,\dots,N\},
$$
for certain $a\in(0,1/2)$. Consequently, we can write
\begin{eqnarray*}
\lefteqn{(\lambda+c_{3}-u_{j}\Delta_{0,0})^{-1}u[\varphi_{j},\Delta_{0,0}]}\\
&=&(\lambda+c_{3}-u_{j}\Delta_{0,0})^{-1}(c_{1}-u_{j}\Delta_{0,0})^{-a}(c_{1}-u_{j}\Delta_{0,0})^{a}u[\varphi_{j},\Delta_{0,0}]
\end{eqnarray*}
and, due to Remark \ref{TaLnLem}, we can see that the $\mathcal{L}(X_{1}^{0,0})$-norm of the second term on the right-hand side of \eqref{Lftinv} becomes arbitrary small, uniformly in $\lambda$, by choosing $c_{2}$ large enough. Hence, if we choose $c_{2}$ sufficiently large, we deduce that $c_{3}-u\Delta_{0,0}+\lambda$ has a left inverse $L$ that belongs to $\mathcal{L}(X_{0}^{0,0},X_{1}^{0,0})$.

By \eqref{Lftinv}, similarly to \eqref{rightinvrs}, we have
\begin{equation}\label{rightnverse}
(\lambda+c_{3}-u\Delta_{0,0})L=I+\sum_{j=1}^{N}u[\psi_{j},\Delta_{0,0}](\lambda+c_{3}-u_{j}\Delta_{0,0})^{-1}(\varphi_{j}+u[\varphi_{j},\Delta_{0,0}]L).
\end{equation}
For each $b\in(\eta,1)$ and $j\in\{1,\dots,N\}$, by \cite[(I.2.5.2) and (I.2.9.6)]{Am} we have $\mathcal{D}((c_{1}-u_{j}\Delta_{0,0})^{b})\hookrightarrow [X_{0}^{0,0},X_{1}^{0,0}]_{\eta}$, so that, by Lemma \ref{Hprop} (iii) and \eqref{domemb}, $[\psi_{j},\Delta_{0,0}]$ maps $\mathcal{D}((c_{1}-u_{j}\Delta_{0,0})^{b})$ continuously to 
\begin{eqnarray*}
\lefteqn{[\psi_{j},\Delta_{0,0}]\big([ H_{q}^{0}(\mathbb{R}^{n};\mathcal{H}_{p}^{0,\gamma}(\mathbb{B})),H_{q}^{2\rho_{1}}(\mathbb{R}^{n};\mathcal{H}_{p}^{2\rho_{2},\gamma+2\rho_{2}}(\mathbb{B})\oplus\mathbb{C}_{\omega})]_{\eta})}\\
&&\hspace{14pt}\hookrightarrow [\psi_{j},\Delta_{0,0}]\big(H_{q}^{2\eta\rho_{1}-\varepsilon}(\mathbb{R}^{n};\mathcal{H}_{p}^{2\eta\rho_{2}-\varepsilon,\gamma+2\eta\rho_{2}-\varepsilon}(\mathbb{B})\oplus\mathbb{C}_{\omega})\big).
\end{eqnarray*}
Taking $\varepsilon\in(0,1/8)$, $\eta=1-\varepsilon$ and using the following two choices for $\rho_{1}$, $\rho_{2}$:
$$
(\rho_{1},\rho_{2})=(\varepsilon,1-2\varepsilon), \quad (\rho_{1},\rho_{2})=(1-2\varepsilon,\varepsilon),
$$
together with \eqref{expcomut}, we deduce that $[\psi_{j},\Delta_{0,0}]$ maps the domain $\mathcal{D}((c_{1}-u_{j}\Delta_{0,0})^{b})$ continuously to $H_{q}^{\varepsilon/2}(\mathbb{R}^{n};\mathcal{H}_{p}^{\varepsilon/2,\gamma+\varepsilon/2}(\mathbb{B})\oplus\mathbb{C}_{\omega})\hookrightarrow X_{0}^{0,0}$. Consequently, by Lemma \ref{ProbHR} (ii) and (v), each map 
\begin{equation}\label{bpowermap}
u[\psi_{j},\Delta_{0,0}]:\mathcal{D}((c_{1}-u_{j}\Delta_{0,0})^{b})\rightarrow X_{0}^{0,0}
\end{equation}
is bounded. Therefore, by writing 
\begin{eqnarray*}
\lefteqn{u[\psi_{j},\Delta_{0,0}](\lambda+c_{3}-u_{j}\Delta_{0,0})^{-1}}\\
&=&u[\psi_{j},\Delta_{0,0}](c_{1}-u_{j}\Delta_{0,0})^{-b}(c_{1}-u_{j}\Delta_{0,0})^{b}(\lambda+c_{3}-u_{j}\Delta_{0,0})^{-1}
\end{eqnarray*}
on the right-hand side of \eqref{rightnverse} and using Remark \ref{TaLnLem}, we see that, after choosing $c_{2}$ sufficiently large, $c_{3}-u\Delta_{0,0}+\lambda$ has also a right inverse that belongs to $\mathcal{L}(X_{0}^{0,0},X_{1}^{0,0})$. In addition, by \eqref{Lftinv} we express
\begin{equation}\label{invnewmannser}
(c_{3}-u\Delta_{0,0}+\lambda)^{-1}=\sum_{k=0}^{\infty}Q^{k}(\lambda)R(\lambda), \quad \lambda\in S_{\theta},
\end{equation}
where
$$
Q(\lambda)=\sum_{j=1}^{N}\psi_{j}(\lambda+c_{3}-u_{j}\Delta_{0,0})^{-1}u[\varphi_{j},\Delta_{0,0}]
$$
and
$$ 
R(\lambda)=\sum_{j=1}^{N}\psi_{j}(\lambda+c_{3}-u_{j}\Delta_{0,0})^{-1}\varphi_{j}, \quad \lambda\in S_{\theta}.
$$

Concerning $R$-sectoriality for $c_{3}-u\Delta_{0,0}$, based on \eqref{invnewmannser} we have
\begin{equation}\label{expforrsec}
\|\sum_{j=1}^{\xi}\epsilon_{j}\lambda_{j}(c_{3}-u\Delta_{0,0}+\lambda_{j})^{-1}x_{j}\|_{L^{2}(0,1;X_{0}^{0,0})}\leq\sum_{k=0}^{\infty}\|\sum_{j=1}^{\xi}\epsilon_{j}\lambda_{j}Q^{k}(\lambda_{j})R(\lambda_{j})x_{j}\|_{L^{2}(0,1;X_{0}^{0,0})}.
\end{equation}
By \cite[Lemma 2.6]{RS2}, the $0$-th term of the infinite sum on the right-hand side of \eqref{expforrsec} is estimated
\begin{equation}\label{kkskhfhf}
\leq C_{1}^{2}NK(1+2/S(\theta))\|\sum_{j=1}^{\xi}\epsilon_{j}x_{j}\|_{L^{2}(0,1;X_{0}^{0})},
\end{equation}
where $S(\theta)$ is defined in \eqref{Stheta} and
$$
C_{1}=\max_{j\in\{1,\dots,N\}}\{\|\psi_{j}\cdot\|_{\mathcal{L}(X_{0}^{0,0})}, \|\varphi_{j}\cdot\|_{\mathcal{L}(X_{0}^{0,0})}\}.
$$
Furthermore, for each $k\in\mathbb{N}$ and $j\in\{1,\dots,\xi\}$, the term $Q^{k}(\lambda_{j})R(\lambda_{j})$ in \eqref{expforrsec}, after replacing $c_{2}$ with $c_{2}+c_{4}$, for some $c_{4}>0$, is a sum of $N^{k+1}$ terms of the form
\begin{eqnarray*}
\lefteqn{\psi_{j_{1}}(A_{j_{1}}+c_{4}+\lambda_{j})^{-1}u[\varphi_{j_{1}},\Delta_{0,0}]}\\
&&\psi_{j_{2}}A_{j_{2}}^{-b}A_{j_{2}}^{b}(A_{j_{2}}+c_{4})^{-1}(A_{j_{2}}+c_{4})(A_{j_{2}}+c_{4}+\lambda_{j})^{-1}u[\varphi_{j_{2}},\Delta_{0,0}]\\
&&\dots\psi_{j_{k}}A_{j_{k}}^{-b}A_{j_{k}}^{b}(A_{j_{k}}+c_{4})^{-1}(A_{j_{k}}+c_{4})(A_{j_{k}}+c_{4}+\lambda_{j})^{-1}u[\varphi_{j_{k}},\Delta_{0,0}]\\
&&\psi_{\ell}A_{\ell}^{-1}A_{\ell}(A_{\ell}+c_{4})^{-1}(A_{\ell}+c_{4})(A_{\ell}+c_{4}+\lambda_{j})^{-1}\varphi_{\ell},
\end{eqnarray*}
where $j_{1},\dots,j_{k},\ell\in \{1,\dots,N\}$, $A_{\ell}=c_{3}-u_{\ell}\Delta_{0,0}$ and we have taken into account \eqref{bpowermap}. Due to Remark \ref{TaLnLem}, let $c_{4}$ be sufficiently large such that $\|A_{\ell}^{b}(A_{\ell}+c_{4})^{-1}\|_{X_{0}^{0,0}}<\varepsilon$ for each $\ell\in \{1,\dots,N\}$. Then, the $k$-th term of the infinite sum on the right-hand side of \eqref{expforrsec} becomes
\begin{equation}\label{jahagfs}
\leq \varepsilon^{k-1}C_{1}^{2}C_{2}^{k}N^{k+1}(1+K(1+2/S(\theta)))^{k+2},
\end{equation}
where
$$
C_{2}=\max_{j,\ell\in\{1,\dots,N\}}\{\|u[\varphi_{j},\Delta_{0,0}]\psi_{\ell}A_{\ell}^{-b}\|_{X_{0}^{0,0}}, \|u[\varphi_{j},\Delta_{0,0}]\psi_{\ell}A_{\ell}^{-1}\|_{X_{0}^{0,0}}\}.
$$
Hence, $R$-sectoriality for $c_{3}+c_{4}-u\Delta_{0,0}$ follows from \eqref{expforrsec}, \eqref{kkskhfhf} and \eqref{jahagfs}, after choosing $\varepsilon$ small enough.

{\em Case $r,s\in\mathbb{N}_{0}$}. Proceeding by induction, we assume that for some $r,s\in\mathbb{N}_{0}$ there exist $\theta_{1}\in(\pi/2,\pi)$ and $c_{5}>0$ such that $c_{5}-u\Delta_{r,s}\in\mathcal{R}(\theta_{1})$. Denoting by $(\widetilde{r},\widetilde{s})$ either $(r+1,s)$ or $(r,s+1)$, we will show that $c_{6}-u\Delta_{\widetilde{r},\widetilde{s}}\in\mathcal{R}(\theta_{2})$, for some $\theta_{2}\in(\pi/2,\pi)$ and $c_{6}>0$, provided that $u\in \mathcal{H}_{q,p,\nu/2+\delta}^{\widetilde{r}+1+n/p +\varepsilon,\widetilde{s}+\nu/p+\varepsilon}(\mathbb{F})_{\oplus \mathbb{C}_{\omega}}$. Let $(\zeta_{1},\dots,\zeta_{n+\nu})$ be local coordinates in $U_{\tau}(w_{j})$, $j\in\{1,\dots,N\}$, where, when $j\in\{1,\dots,N_{0}\}$ we choose 
$$
(\zeta_{1},\dots,\zeta_{n+\nu})=\bigg\{ \begin{array}{lll}(z_{1},\dots,z_{n},x) \in \mathbb{R}^{n}\times[0,1) & \text{if} & \nu=1\\
(z_{1},\dots,z_{n},x,y_{1},\dots,y_{\nu-1}) \in \mathbb{R}^{n}\times[0,1) \times \mathcal{Y} & \text{if} & \nu>1. \end{array}
$$
Let $W_{j,k}=\varphi_{j}\partial_{\zeta_{k}}$, $j\in\{1,\dots,N\}$, $k\in\{1,\dots,n\}$ when $(\widetilde{r},\widetilde{s})=(r+1,s)$ and $k\in\{n+1,\dots,n+\nu\}$ when $(\widetilde{r},\widetilde{s})=(r,s+1)$. Furthermore, when $j\in\{1,\dots,N_{0}\}$ and $(\widetilde{r},\widetilde{s})=(r,s+1)$, by $\partial_{\zeta_{n+1}}$ we mean $x\partial_{x}$. Since by Lemma \ref{propF} (i) the space $X_{0}^{r,s}$ is UMD, due to Theorem \ref{KaWeTh}, let
\begin{equation}\label{vregqs}
v\in H_{d}^{1}(0,T;X_{0}^{r,s})\cap L^{d}(0,T;X_{1}^{r,s})
\end{equation}
be the unique solution of 
\begin{eqnarray*}
v_{t}(t)+(c_{5}-u\Delta_{r,s})v(t)&=&h(t), \quad t\in(0,T),\\
v(0)&=&0,
\end{eqnarray*}
where $h\in L^{d}(0,T;X_{0}^{\widetilde{r},\widetilde{s}})$ and $T>0$. If we apply $W_{j,k}$, $j\in\{1,\dots,N\}$, $k\in\{1,\dots,n\}$ or $k\in\{n+1,\dots,n+\nu\}$, to the above equation we get
\begin{eqnarray}\nonumber
(W_{j,k}v)_{t}(t)+(c_{5}-u\Delta_{r,s})(W_{j,k}v)(t)&&\\\label{wacteq1}
&=&[W_{j,k},u\Delta_{r,s}]v(t)+(W_{j,k}h)(t), \quad t\in(0,T),\\\label{wacteq2}
(W_{j,k}v)(0)&=&0.
\end{eqnarray}
We have
\begin{equation}\label{commutatorud}
[W_{j,k},u\Delta_{r,s}]=\varphi_{j}(\partial_{\zeta_{k}}u)\Delta_{r,s}+u[\varphi_{j}\partial_{\zeta_{k}},\Delta_{r,s}],
\end{equation}
and, when $j\in\{1,\dots,N_{0}\}$, by \eqref{deltaF} we get
\begin{eqnarray}\nonumber
\lefteqn{[\varphi_{j}\partial_{\zeta_{k}},\Delta_{r,s}]=[\varphi_{j}\partial_{\zeta_{k}},\Delta_{\mathfrak{m}}]+[\varphi_{j}\partial_{\zeta_{k}},D]-(\partial_{x}^{2}\varphi_{j})\partial_{\zeta_{k}}-2(\partial_{x}\varphi_{j})\partial_{x}\partial_{\zeta_{k}}}\\\label{poastieo}
&&\hspace{3mm}+\varphi_{j}\partial_{\zeta_{k}}\Big(\frac{x\partial_{x}\det[\mathfrak{h}(z,x)]}{2\det[\mathfrak{h}(z,x)]}\Big)\frac{\partial_{x}}{x}+\varphi_{j}\Big(\nu-1+\frac{x\partial_{x}\det[\mathfrak{h}(z,x)]}{2\det[\mathfrak{h}(z,x)]}\Big)\partial_{\zeta_{k}}(x^{-1})\partial_{x}\\\nonumber
&&-(\partial_{x}\varphi_{j})\Big(\nu-1+\frac{x\partial_{x}\det[\mathfrak{h}(z,x)]}{2\det[\mathfrak{h}(z,x)]}\Big)\frac{\partial_{\zeta_{k}}}{x}+\varphi_{j}\partial_{\zeta_{k}}(x^{-2})\Delta_{\mathfrak{h}(z,x)}+\frac{1}{x^{2}}[\varphi_{j}\partial_{\zeta_{k}},\Delta_{\mathfrak{h}(z,x)}],
\end{eqnarray}
where, in the case of $\nu=1$, the last two lines in the above expression have to be ignored. 

Concerning the first term on the right-hand side of \eqref{commutatorud}, by Lemma \ref{propF} (vii), multiplication by $\varphi_{j}(\partial_{\zeta_{k}}u)$ induces a bounded map from $X_{0}^{r,s}$ to itself, so that this term is a bounded map from $X_{1}^{r,s}$ to $X_{0}^{r,s}$. Furthermore, by the choice of $\{\phi_{j}\}_{j\in\{1,\dots,N\}}$, Corollary \ref{sharpmixder} with $\rho=1/2$ and \eqref{poastieo}, we see that $[\varphi_{j}\partial_{\zeta_{k}},\Delta_{r,s}]$ induces a bounded map from $X_{1}^{r,s}$ to $X_{0}^{r,s}$. Hence, due to Lemma \ref{propF} (vii), the second term on the right-hand side of \eqref{commutatorud} is also a bounded map from $X_{1}^{r,s}$ to $X_{0}^{r,s}$. Therefore, $[W_{j,k},u\Delta_{r,s}]v\in L^{d}(0,T;X_{0}^{r,s})$. In addition, $W_{j,k}h\in L^{d}(0,T;X_{0}^{r,s})$, so that, by the assumption and \eqref{wacteq1}-\eqref{wacteq2}, we obtain $W_{j,k}v\in H_{d}^{1}(0,T;X_{0}^{r,s})\cap L^{d}(0,T;X_{1}^{r,s})$. This together with \eqref{vregqs} imply $v\in H_{d}^{1}(0,T;X_{0}^{\widetilde{r},\widetilde{s}})\cap L^{d}(0,T;X_{1}^{\widetilde{r},\widetilde{s}})$, i.e. $c_{5}-u\Delta_{\widetilde{r},\widetilde{s}}$ has maximal $L^{d}$-regularity, and hence $c_{6}-u\Delta_{\widetilde{r},\widetilde{s}}\in\mathcal{R}(\theta_{2})$, for some $\theta_{2}\in(\pi/2,\pi)$ and $c_{6}>0$, due to \cite[Theorem 4.2]{Weis}.

{\em Case $r,s\geq0$}. The proof is the same as the {\em Case $r\geq0$} in the proof of Theorem \ref{thsec33}.
\end{proof}

Using Lemma \ref{Hprop} (ii), (iii) and Corollary \ref{sharpmixder}, the following embedding is valid.

\begin{lemma}\label{tracespace} 
Let $p,q,d\in(1,\infty)$, $r,s\geq0$ and $\gamma\in\mathbb{R}$. Then for any $\rho\in(0,1)$ and $\varepsilon>0$ we have
$$
(\mathcal{H}_{q,p,\gamma}^{r+2,s}(\mathbb{F})\cap \mathcal{H}_{q,p,\gamma+2}^{r,s+2}(\mathbb{F})_{\oplus\mathbb{C}_{\omega}},\mathcal{H}_{q,p,\gamma}^{r,s}(\mathbb{F}))_{\frac{1}{d},d}\hookrightarrow \mathcal{H}_{q,p,\gamma+2\rho(1-\frac{1}{d})-\varepsilon}^{r+2(1-\rho)(1-\frac{1}{d})-\varepsilon,s+2\rho(1-\frac{1}{d})-\varepsilon}(\mathbb{F})_{\oplus\mathbb{C}_{\omega}}.
$$
\end{lemma}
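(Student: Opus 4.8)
The plan is to reduce, via the partition of unity of Definition \ref{edgespaces}, to an estimate for the cone Bochner spaces, and then to combine the mixed-derivative embedding of Corollary \ref{sharpmixder} with the standard inclusions between the real and complex interpolation functors and with Lemma \ref{Hprop}. Write $X_{0}=\mathcal{H}_{p,\gamma}^{r,s}(\mathbb{F})$, $X_{1}=\mathcal{H}_{p,\gamma}^{r+2,s}(\mathbb{F})\cap\mathcal{H}_{p,\gamma+2}^{r,s+2}(\mathbb{F})_{\oplus\mathbb{C}_{\omega}}$, $E_{0}^{r}=H_{p}^{r}(\mathbb{R}^{n};\mathcal{H}_{p}^{s,\gamma}(\mathbb{B}))$ and $E_{1}^{r}=H_{p}^{r+2}(\mathbb{R}^{n};\mathcal{H}_{p}^{s,\gamma}(\mathbb{B}))\cap H_{p}^{r}(\mathbb{R}^{n};\mathcal{H}_{p}^{s+2,\gamma+2}(\mathbb{B})\oplus\mathbb{C}_{\omega})$.

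First I would localise. As $X_{0}$ and $X_{1}$ are built from the same data $\{\phi_{j}\}$, $\mathbb{M}$, $N_{0}$, the maps $u\mapsto(\phi_{j}u)_{j}$ and $(v_{j})_{j}\mapsto\sum_{j}\psi_{j}v_{j}$ (with $\psi_{j}\equiv1$ on $\mathrm{supp}\,\phi_{j}$) form a coretraction-retraction pair realising $X_{1}$ and $X_{0}$ as complemented subspaces of $\prod_{j\le N_{0}}E_{1}^{r}\times\prod_{j>N_{0}}H_{p}^{r+2+s}(\mathbb{M})$ and of $\prod_{j\le N_{0}}E_{0}^{r}\times\prod_{j>N_{0}}H_{p}^{r+s}(\mathbb{M})$ respectively. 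By the interpolation of retracts, $u\in(X_{1},X_{0})_{1/q,q}$ if and only if $\phi_{j}u\in(E_{1}^{r},E_{0}^{r})_{1/q,q}$ for $j\le N_{0}$ and $\phi_{j}u\in(H_{p}^{r+2+s}(\mathbb{M}),H_{p}^{r+s}(\mathbb{M}))_{1/q,q}$ for $j>N_{0}$. The last space is a Besov space on $\mathbb{M}$ and embeds into $H_{p}^{r+s+2(1-1/q)-\varepsilon}(\mathbb{M})$, while the $\mathbb{M}$-component of the target space is the larger space $H_{p}^{r+s+2(1-1/q)-2\varepsilon}(\mathbb{M})$; so this piece is harmless, and it remains to prove
$$
(E_{1}^{r},E_{0}^{r})_{1/q,q}\hookrightarrow H_{p}^{r+2(1-\rho)(1-1/q)-\varepsilon}\big(\mathbb{R}^{n};\mathcal{H}_{p}^{s+2\rho(1-1/q)-\varepsilon,\,\gamma+2\rho(1-1/q)-\varepsilon}(\mathbb{B})\oplus\mathbb{C}_{\omega}\big).
$$

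By Corollary \ref{sharpmixder}, for every $\delta>0$ there is an embedding $E_{1}^{r}\hookrightarrow\widetilde{E}_{1}^{r}:=H_{p}^{r+2(1-\rho)}(\mathbb{R}^{n};\mathcal{H}_{p}^{s+2\rho,\,\gamma+2\rho-\delta}(\mathbb{B})\oplus\mathbb{C}_{\omega})$, and $(\widetilde{E}_{1}^{r},E_{0}^{r})$ is a nested couple (the base being the larger space). Hence, by monotonicity of real interpolation and the standard inclusions between interpolation functors for nested couples (see e.g. \cite[Theorem 2.6]{Lunar18}), for every small $\sigma>0$,
$$
(E_{1}^{r},E_{0}^{r})_{1/q,q}\hookrightarrow(\widetilde{E}_{1}^{r},E_{0}^{r})_{1/q,q}\hookrightarrow[\widetilde{E}_{1}^{r},E_{0}^{r}]_{1/q-\sigma}=[E_{0}^{r},\widetilde{E}_{1}^{r}]_{\eta},\qquad\eta:=1-\tfrac1q+\sigma,
$$
where $\eta$ is slightly larger than $1-1/q$. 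Now Lemma \ref{Hprop}(ii)-(iii) computes this complex interpolation space: interpolating the $\mathbb{R}^{n}$-smoothness between $r$ and $r+2(1-\rho)$ and the cone parameters between $(s,\gamma)$ and $(s+2\rho,\gamma+2\rho-\delta)$, and positioning $\nu/2$ relative to $\gamma$ and $\gamma+2\rho-\delta$ so that the summand $\mathbb{C}_{\omega}$ is carried through (at the cost of an arbitrarily small weight loss $\delta'>0$), gives
$$
[E_{0}^{r},\widetilde{E}_{1}^{r}]_{\eta}\hookrightarrow H_{p}^{r+2\eta(1-\rho)}\big(\mathbb{R}^{n};\mathcal{H}_{p}^{s+2\eta\rho,\,\gamma+\eta(2\rho-\delta)-\delta'}(\mathbb{B})\oplus\mathbb{C}_{\omega}\big).
$$

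Finally I would match exponents. Since $\eta>1-1/q$, we have $2\eta(1-\rho)\ge2(1-\rho)(1-1/q)-\delta$, $2\eta\rho\ge2\rho(1-1/q)-\delta$ and $\eta(2\rho-\delta)-\delta'\ge2\rho(1-1/q)-\delta-\delta'$; choosing $\delta,\delta'$ with $\delta+\delta'\le\varepsilon$ (and $\delta$ small enough that $\eta<1$ and $2(1-\rho)-\delta,\,2\rho-\delta>0$) makes every index of the last displayed space at least as large as the corresponding index of the Bochner target. Hence Lemma \ref{ProbHR}(i) (monotonicity of the scale $H_{p}^{r}(\mathbb{R}^{n};\mathcal{H}_{p}^{s,\gamma}(\mathbb{B})\oplus\mathbb{C}_{\omega})$ in all three parameters) delivers the required embedding, and reversing the localisation completes the proof. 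I expect the main obstacle to be the mixed-derivative input $E_{1}^{r}\hookrightarrow\widetilde{E}_{1}^{r}$: this is precisely Corollary \ref{sharpmixder} when $\gamma$ satisfies \eqref{gamma2} (the only case needed in the applications), but for a general weight one has to replace the Schrohe-Seiler realisation of the cone Laplacian used there by an auxiliary cone operator with bounded imaginary powers and domain $\mathcal{H}_{p}^{s+2,\gamma+2}(\mathbb{B})\oplus\mathbb{C}_{\omega}$ (or observe that this $\mathbb{C}_{\omega}$-summand is then redundant), and keeping track of the asymptotics space in that range is the delicate point.
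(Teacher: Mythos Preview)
Your approach is essentially the paper's own: localise via the partition of unity, invoke Corollary~\ref{sharpmixder} to pass from $E_{1}^{r}$ to a single Bochner scale, then use the real-to-complex inclusion together with Lemma~\ref{Hprop}~(ii)--(iii). The paper does exactly this, citing \cite[Theorem~1.6]{Lunar18} for the localisation step and \cite[(I.2.5.2)]{Am} for the real-to-complex passage.

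There is, however, a sign slip in your real-to-complex step. For the nested couple $\widetilde{E}_{1}^{r}\hookrightarrow E_{0}^{r}$ one has
\[
(\widetilde{E}_{1}^{r},E_{0}^{r})_{\frac{1}{q},q}\hookrightarrow (\widetilde{E}_{1}^{r},E_{0}^{r})_{\frac{1}{q}+\sigma,1}\hookrightarrow[\widetilde{E}_{1}^{r},E_{0}^{r}]_{\frac{1}{q}+\sigma}=[E_{0}^{r},\widetilde{E}_{1}^{r}]_{\eta},\qquad \eta=1-\tfrac{1}{q}-\sigma,
\]
i.e.\ $\eta$ must be slightly \emph{smaller} than $1-\tfrac{1}{q}$, not larger; your stated target $[\widetilde{E}_{1}^{r},E_{0}^{r}]_{1/q-\sigma}$ is smaller than the source and the embedding as written fails. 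With the corrected $\eta$ the exponent matching still goes through, since the deficits $2\sigma(1-\rho)$, $2\sigma\rho$ are absorbed by the $\varepsilon$-loss in the target (together with $\delta,\delta'$). Also, \cite[Theorem~2.6]{Lunar18} is the interpolation-of-operators theorem, not the real/complex inclusion; the relevant reference in this paper's bibliography is \cite[(I.2.5.2)]{Am}.

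Your closing remark about the hypothesis on $\gamma$ is well taken: the paper's proof likewise invokes Corollary~\ref{sharpmixder}, so the same restriction \eqref{gamma2} is implicitly in force there as well, which is all that is needed for the applications.
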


{\bf Proof of Theorem \ref{pmeonF}.} We apply Theorem \ref{ClementLi} to \eqref{PME3}-\eqref{PME4} with $X_{0}=\mathcal{H}_{q,p,\gamma}^{r,s}(\mathbb{F})$, $X_{1}=\mathcal{H}_{q,p,\gamma}^{r+2,s}(\mathbb{F})\cap \mathcal{H}_{q,p,\gamma+2}^{r,s+2}(\mathbb{F})_{\oplus\mathbb{C}_{\omega}}$, $A(v)u=-mv^{\frac{m-1}{m}}\Delta u$ and $F(v,t)=\p(v,t)$. We have 
\begin{equation}\label{u0inH}
u_{0}^{m},u_{0}^{\frac{m-1}{m}}\in X_{1}\hookrightarrow (X_{1},X_{0})_{1/d,d}.
\end{equation}
Furthermore, by Lemma \ref{propF} (v) and Lemma \ref{tracespace} with $\rho=(1-\delta)/2$, for all $\varepsilon>0$ small enough we have
\begin{equation}\label{embedtrace}
(X_{1},X_{0})_{\frac{1}{d},d}\hookrightarrow \mathcal{H}_{q,p,\gamma+(1-\delta)(1-\frac{1}{d})-\varepsilon}^{r+(1+\delta)(1-\frac{1}{d})-\varepsilon,s+(1-\delta)(1-\frac{1}{d})-\varepsilon}(\mathbb{F})_{\oplus\mathbb{C}_{\omega}}\hookrightarrow C(\mathbb{F}).
\end{equation}
Let $U$ be an open ball in $(X_{1},X_{0})_{1/d,d}$ centered at $u_{0}^{m}$. For any $v\in U$ we have
$$
|u_{0}^{m}-v|\leq \|u_{0}^{m}-v\|_{C(\mathbb{F})}\leq C_{1}\|u_{0}^{m}-v\|_{(X_{1},X_{0})_{\frac{1}{d},d}},
$$ 
for certain $C_{1}>0$. Therefore, we can choose the radius of $U$ sufficiently small such that $\mathrm{Re}(v)>a$ for all $v\in U$ and certain $a>0$. Moreover, let $\Gamma_{1}$ be a simple closed path in $\{z\in \mathbb{C}\, |\, \mathrm{Re}(z)>0\}$ around $\cup_{v\in U}\mathrm{Ran}(v)$. We check the assumptions (H1)-(H3) of Theorem \ref{ClementLi}.

(H1) If $v_{1},v_{2}\in U$, then for all sufficiently small $\varepsilon>0$ we have
\begin{eqnarray*}
\lefteqn{\|A(v_{1})-A(v_{2})\|_{\mathcal{L}(X_{1},X_{0})}\leq C_{2}\|(v_{1}^{\frac{m-1}{m}}-v_{2}^{\frac{m-1}{m}})\cdot\|_{\mathcal{L}(X_{0})}}\\
&&\leq C_{3}\|v_{1}-v_{2}\|_{\mathcal{H}_{q,p,\gamma+(1-\delta)(1-\frac{1}{d})-\varepsilon}^{r+(1+\delta)(1-\frac{1}{d})-\varepsilon,s+(1-\delta)(1-\frac{1}{d})-\varepsilon}(\mathbb{F})_{\oplus\mathbb{C}_{\omega}}}\leq C_{4} \|v_{1}-v_{2}\|_{(X_{1},X_{0})_{\frac{1}{d},d}},
\end{eqnarray*}
for certain $C_{2},C_{3}, C_{4}>0$, where we have used Lemma \ref{propF} (vii) and \eqref{embedtrace}. 

(H2) If $t_{1},t_{2}\in[0,T_{0}]$, by Lemma \ref{propF} (vii) and \eqref{embedtrace} we estimate
\begin{eqnarray*}
\lefteqn{\|\p(v_{1},t_{1}) -\p(v_{2},t_{2})\|_{X_{0}}=\|\frac{1}{2\pi i}\int_{\Gamma_{1}}\Big(\frac{\p(\lambda,t_{1})}{\lambda-v_{1}}-\frac{\p(\lambda,t_{2})}{\lambda-v_{2}}\Big)d\lambda\|_{X_{0}}}\\
&\leq& \frac{1}{2\pi}\int_{\Gamma_{1}}\Big(|\p(\lambda,t_{1})|\|\frac{v_{1}-v_{2}}{(\lambda-v_{1})(\lambda-v_{2})}\|_{X_{0}}+|\p(\lambda,t_{1})-\p(\lambda,t_{2})|\|\frac{1}{\lambda-v_{2}}\|_{X_{0}}\Big)d\lambda\\
&\leq& C_{5}(\|v_{1}-v_{2}\|_{X_{0}}+|t_{1}-t_{2}|\|1_{\mathbb{F}}\|_{X_{0}})\leq C_{6}(\|v_{1}-v_{2}\|_{(X_{1},X_{0})_{\frac{1}{d},d}}+|t_{1}-t_{2}|),
\end{eqnarray*}
for some $C_{5},C_{6}>0$, where $1_{\mathbb{F}}$ stands for the constant function equal to one on $\mathbb{F}$.

(H3) Maximal $L^{q}$-regularity for the linearized term follows by Theorem \ref{KaWeTh}, Lemma \ref{propF} (i), Theorem \ref{maxreglinterm} and \eqref{u0inH}. 

We conclude that there exists a $T\in(0,T_{0}]$ and a unique 
\begin{equation}\label{vreg}
v\in H_{d}^{1}(0,T;X_{0})\cap L^{d}(0,T;X_{1})\hookrightarrow C([0,T];(X_{1},X_{0})_{\frac{1}{d},d})
\end{equation}
solving \eqref{PME3}-\eqref{PME4}. 

By \eqref{embedtrace}-\eqref{vreg} we get $v\in C([0,T];C(\mathbb{F}))$. Choose $T>0$ sufficiently small such that $\mathrm{Re}(v(t))>\beta$ for all $t\in [0,T]$ and certain $\beta>0$. Then for any $t_{1},t_{2}\in[0,T]$ by Lemma \ref{propF} (vii) we have
$$
\|v^{\frac{1}{m}}(t_{1})-v^{\frac{1}{m}}(t_{2})\|_{X_{0}}\leq C_{7}\|1_{\mathbb{F}}\|_{X_{0}}\|v(t_{1})-v(t_{2})\|_{\mathcal{H}_{q,p,\gamma+(1-\delta)(1-\frac{1}{d})-\varepsilon}^{r+(1+\delta)(1-\frac{1}{d})-\varepsilon,s+(1-\delta)(1-\frac{1}{d})-\varepsilon}(\mathbb{F})_{\oplus\mathbb{C}_{\omega}}},
$$
for some $C_{7}>0$ and all $\varepsilon>0$ sufficiently small. This together with \eqref{embedtrace}-\eqref{vreg} implies 
\begin{equation}\label{extracontreg}
u\in C([0,T];X_{0}).
\end{equation}
Moreover, by using Lemma \ref{propF} (vii) and \eqref{embedtrace}-\eqref{vreg} we obtain
$$
m^{d}\int_{0}^{T}\|u'(t)\|_{X_{0}}^{d}dt=\int_{0}^{T}\|v^{\frac{1-m}{m}}(t)v'(t)\|_{X_{0}}^{d}dt\leq C_{8}\int_{0}^{T}\|v'(t)\|_{X_{0}}^{d}dt,
$$
for some $C_{8}>0$, which, together with \eqref{extracontreg}, implies that $u\in H_{d}^{1}(0,T;X_{0})$. We conclude that $u$ satisfies \eqref{regofu1}-\eqref{regofu2}. 

Suppose now that there exists a second solution $w$ satisfying \eqref{regofu1}-\eqref{regofu2}. Then
$$
\int_{0}^{T}\|(w^{m})'(t)\|_{X_{0}}^{d}dt=m^{d}\int_{0}^{T}\|(w^m)^{\frac{m-1}{m}}(t)w'(t)\|_{X_{0}}^{d}dt\leq C_{9}\int_{0}^{T}\|w'(t)\|_{X_{0}}^{d}dt,
$$
for certain $C_{9}>0$, where we have used \eqref{regofu2} and Lemma \ref{propF} (vii). We conclude that $w^{m}$ is a second solution of \eqref{PME3}-\eqref{PME4} satisfying the regularity \eqref{vreg}, which is a contradiction. \mbox{\ } \hfill $\Box$

{\bf Acknowledgements:} We would like to express our gratitude to the anonymous referee for the careful reading of the manuscript and the thoughtful comments, including the definition of the function spaces on manifolds with edges and better proofs of Proposition \ref{mxtderPr} and Remark \ref{UMDalpha}.

\end{document}